\definecolor{hot}{RGB}{65,105,225}
\theoremstyle{plain}
\newtheorem{theorem}{Theorem}[section]
\newtheorem{proposition}[theorem]{Proposition}
\newtheorem{corollary}[theorem]{Corollary}
\newtheorem{lemma}[theorem]{Lemma}
\theoremstyle{definition}
\newtheorem{definition}[theorem]{Definition}
\newtheorem{question}[theorem]{Question}
\newtheorem{remark}[theorem]{Remark}
\theoremstyle{assumption}
\newtheorem*{ex*}{Example}
\newtheorem{property}[theorem]{Property}
\numberwithin{equation}{section}
\theoremstyle{bigthm}
\newtheorem{bigthm}{Theorem}
\DeclareMathOperator{\id}{id}                    
\title[Self-Covering]{Self-Covering, Finiteness, And Fibering Over A Circle}
\begin{document}

\author{Lizhen Qin}
\address{Department of Mathematics, Nanjing University, 22 Hankou Road, Nanjing, Jiangsu 210093, China}
\email{qinlz@nju.edu.cn}

\author{Yang Su}
\address{HLM, Academy of Mathematics and Systems Science, Chinese Academy of Sciences, Beijing 100190, China}
\address{School of Mathematical Sciences, University of Chinese Academy of Sciences, Beijing 100049, China}
\email{suyang@math.ac.cn}

\author{Botong Wang}
\address{Department of Mathematics, University of Wisconsin-Madison, 480 Lincoln Drive, Madison, WI 53706, USA}
\email{bwang274@wisc.edu}

\maketitle

\begin{center}
Dedicated to Francis Thomas Farrell on the occasion of his 80th birthday.
\end{center}

\begin{abstract}
A topological space is called self-covering if it is a nontrivial cover of itself. We prove that a closed self-covering manifold $M$ with free abelian fundamental group fibers over a circle under mild assumptions. In particular, we give a complete answer to the question whether a self-covering manifold with fundamental group $\mathbb Z$ is a fiber bundle over $S^1$, except for the $4$-dimensional smooth case. As an algebraic Hilfssatz, we  develop a criterion for finite generation of modules over a commutative Noetherian ring.  We also construct examples of self-covering manifolds with non-free abelian fundamental group, which are not fiber bundles over $S^1$.
\end{abstract}

\section{Introduction}\label{sec_introduction}
A topological space is called self-covering if it is homeomorphic, or more generally homotopy equivalent, to a nontrivial covering space of itself. Self-covering spaces, especially self-covering manifolds, are well-studied in the literature (see e.g. \cite{BDT}, \cite{Dere}, \cite{VanLimbeek} and \cite{Wang_Wu}). Many works about self-covering spaces including this paper are motivated by the following questions.
\begin{quote}
\emph{Which closed manifolds admit a self-covering structure? And what geometric structures does a self-covering manifold possess?}
\end{quote}
The simplest self-covering closed manifold is a circle. In this paper, we study when a closed self-covering manifold is a fiber bundle over a circle.

We work in the category of smooth (DIFF), piecewise-linear (PL) and topological (TOP) manifolds simultaneously. Throughout this paper, the abbreviation CAT stands for any of these categories. In particular, a CAT manifold is a smooth, piecewise-linear or topological manifold, and a CAT isomorphism is a diffeomorphism, a piecewise-linear homeomorphism or a homeomorphism, etc.

We start with a simple observation. Let $F$ be a closed CAT manifold and $f$ be an automorphism of $F$. Then the quotient space $M=F \times [0,1] / \sim$ with $(x,0) \sim (f(x), 1)$ is a CAT fiber bundle over $S^{1}$ with fiber $F$ and monodromy $f$. For each integer $k>0$, let $M_k=F \times [0,1]/\sim$ with $(x,0) \sim (f^{k}(x), 1)$. Then $M_{k}$ is a $k$-fold cover of $M$. If there is an automorphism $g \colon F \to F$ such that $f^{k} g$ is CAT pseudo-isotopic to $gf$ or $g f^{-1}$, then $M$ is isomorphic to $M_{k}$ and we obtain a self-covering manifold $M$, which is a fiber bundle over $S^1$. The motivation of this paper is to study whether every self-covering manifold can be constructed in this way. In other words, we study the following question.

\begin{question}\label{que_general}
Suppose $M$ is a connected closed CAT $n$-manifold, which is isomorphic to a $k$-fold cover $M_{k}$ of itself for some $k>1$. Is $M$ a CAT fiber bundle over $S^{1}$ with monodromy $f$ such that $f^{k} g$ is CAT pseudo-isotopic to $gf$ or $g f^{-1}$ for some automorphism $g$ of $F$?
\end{question}

The answer to Question \ref{que_general} is negative in general as we construct two types of counterexamples in Theorems \ref{thm_no_finite} and \ref{thm_no_fiber}. However, under restrictions of the fundamental group, one may obtain a positive answer. A special case of Question \ref{que_general} is the following.

\begin{question}\label{que_pi1_z}
Is a connected closed self-covering CAT $n$-manifold $M$ with $\pi_{1} (M) = \mathbb{Z}$ a CAT fiber bundle over $S^{1}$?
\end{question}

Though depending on both the dimension and the category, the answer to Question \ref{que_pi1_z} is almost always positive. The question is trivial for $n \le 2$. For $n=3$, an affirmative answer is given by the classical theory of $3$-manifolds.  We will give an answer to Question \ref{que_pi1_z} in dimension $\ge 4$ in this paper. The main result is a fibering theorem for self-covering manifolds with free abelian fundamental group in dimension $\ge 5$ (Theorem \ref{thm_high_manifold}).

To fiber a manifold $M$ over a circle, one could apply the classical fibering theorem of Browder-Levine \cite{Browder_Levine}, or more generally, the theorems of Farrell \cite{Farrell} and Siebenmann \cite{Siebenmann}. We would like to emphasize that all these theorems rely on the assumption that the infinite cyclic cover $M_{\infty}$ of $M$ induced by certain epimorphism $\pi_{1} (M) \rightarrow \mathbb{Z}$ is homotopy equivalent to a finite CW complex. This finiteness condition is indeed necessary for $M$ being fibered over a circle. In fact, for a fiber bundle $M$ over $S^{1}$,  the infinite cyclic cover corresponding to the epimorphism $\pi_1(M) \to \pi_1(S^1)$ is homotopy equivalent to the fiber $F$, which is a closed manifold.  Therefore our key step toward the fibering theorem of self-covering manifolds is to deduce the homotopy finiteness from the self-covering assumption. This is a problem in homotopy theory. So we first study the infinite cyclic coverings of CW complexes, which itself has a long history and many applications  (see e.g. \cite{Cowsik_Swarup}, \cite{Hillman_Kochloukova}, \cite{Milnor68} and \cite{Shinohara_Sumners}). In this paper, we prove that certain finiteness and periodicity (Theorems \ref{thm_cw_homotopy}, \ref{thm_cw} and \ref{thm_cw_period}) of the infinite cyclic covers can be derived from the self-covering assumption of the bases.


\medskip

To begin with, we  state our finiteness theorem for CW complexes. Let $X$ be a connected CW complex with fundamental group $\pi_{1} (X) = G \times \mathbb{Z}$, where $G$ is a finitely generated abelian group.  Let $X_{k}$ be the finite cyclic cover of $X$ with $\pi_{1} (X_{k}) = G \times (k \mathbb{Z})$, and let $X_{\infty}$ be the infinite cyclic cover with $\pi_{1} (X_{\infty}) = G \times 0 $. Recall that a CW complex is of finite type if there are only finitely many cells in each dimension, whereas the dimension of the complex may be infinite; a space is finitely dominated if it is dominated by a finite CW complex.

\begin{bigthm}\label{thm_cw_homotopy}
Let $X$ be defined as above. Suppose there exists a homotopy equivalence $h: X \rightarrow X_{k}$ with $k>1$ and $h_{\sharp} (G \times 0) = G \times 0$, where $h_{\sharp}: \pi_{1} (X) \rightarrow \pi_{1} (X_{k})$ is the isomorphism induced by $h$. If $X$ is homotopy equivalent to a CW complex of finite type (resp. is finitely dominated), then so is $X_{\infty}$.
\end{bigthm}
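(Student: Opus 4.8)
The plan is to split Theorem~\ref{thm_cw_homotopy} into a topological reduction and a purely algebraic \emph{Hilfssatz}, and then to prove the latter. Write $R:=\mathbb{Z}[G]=\mathbb{Z}[\pi_1(X_\infty)]$, so that $\Lambda:=\mathbb{Z}[\pi_1(X)]=R[t,t^{-1}]$ is a commutative Noetherian ring, and let $\widetilde X$ be the universal cover of $X$, which is simultaneously the universal cover of $X_k$ and of $X_\infty$.

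\emph{Topological reduction.} Since $G$ is finitely generated abelian, $\pi_1(X_\infty)=G$ is finitely presented, so by Wall's finiteness theorems $X_\infty$ is homotopy equivalent to a complex of finite type (resp.\ is finitely dominated) as soon as $H_i(\widetilde X;\mathbb{Z})=H_i(\widetilde{X_\infty};\mathbb{Z})$ is finitely generated over $R$ for every $i$ (resp.\ together with vanishing for $i$ large, which will be automatic). Now because $X$ is homotopy equivalent to a finite-type complex (resp.\ is finitely dominated), $C_*(\widetilde X)$ is chain homotopy equivalent over $\Lambda$ to a complex of finitely generated free $\Lambda$-modules, finite in each degree (resp.\ to a finite complex of finitely generated projective $\Lambda$-modules); as $\Lambda$ is Noetherian this already makes each $H_i(\widetilde X;\mathbb{Z})$ a finitely generated $\Lambda$-module (and, in the finitely dominated case, zero for $i$ large, so that $C_*(\widetilde X)|_R$ is a finite complex of $R$-flat modules with finitely generated homology, hence a perfect $R$-complex). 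Thus everything reduces to upgrading ``finitely generated over $\Lambda$'' to ``finitely generated over $R$'', and this is forced by the self-covering hypothesis: $h$ lifts to a homotopy equivalence $\widetilde h\colon\widetilde X\to\widetilde X$ equivariant for $h_\sharp\colon\pi_1(X)\xrightarrow{\ \sim\ }\pi_1(X_k)\subseteq\pi_1(X)$, and $h_\sharp(G\times 0)=G\times 0$ means that the induced ring endomorphism $\sigma$ of $\Lambda$ restricts to an automorphism of $R$ and sends $t$ to $u\,t^{\pm k}$ for some unit $u\in R^{\times}$; on homology $\widetilde h_*$ becomes, for each $i$, an additive automorphism $\psi$ of $H_i(\widetilde X;\mathbb{Z})$ with $\psi(\lambda a)=\sigma(\lambda)\psi(a)$. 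So Theorem~\ref{thm_cw_homotopy} follows from the following statement. \emph{Hilfssatz: let $R$ be commutative Noetherian, $\Lambda=R[t,t^{-1}]$, and $\sigma\colon\Lambda\to\Lambda$ a ring homomorphism with $\sigma|_R\in\operatorname{Aut}(R)$ and $\sigma(t)=u\,t^{\pm k}$, $u\in R^{\times}$, $k\ge 2$; if $A$ is a finitely generated $\Lambda$-module carrying an additive automorphism $\psi$ with $\psi(\lambda a)=\sigma(\lambda)\psi(a)$, then $A$ is finitely generated over $R$.}

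\emph{Proof of the Hilfssatz: a finite-generation criterion and the field case.} Two facts organize the argument. First, a criterion: an $R[t,t^{-1}]$-module that is finitely generated over both $R[t]$ and $R[t^{-1}]$ is finitely generated over $R$ — if $A=R[t]M=R[t^{-1}]M$ with $M$ finitely generated over $R$, then $t^{-1}M$ lies in $B:=\sum_{0\le j\le N}t^jM$ for some $N$, one checks that $t^{-1}B\subseteq B$, hence $t^{-j}M\subseteq B$ for all $j\ge 0$ and $A=\sum_{j\ge 0}t^{-j}M\subseteq B$. Second, bijectivity of $\psi$ forces $\sigma$ to descend to an injective endomorphism $\overline\sigma$ of $\overline\Lambda:=\Lambda/\operatorname{Ann}_\Lambda A$ (because $\lambda A=0\Leftrightarrow\psi(\lambda A)=0\Leftrightarrow\sigma(\lambda)A=0$), and moreover, writing $M$ for the $R$-span of a set of $\Lambda$-generators of $A$, one has $A=\sigma^{m}(\Lambda)\cdot\psi^{m}(M)$ for every $m$, so $A$ is finitely generated over the subring $\overline\sigma^{\,m}(\overline\Lambda)=R[\tau^{\pm k^{m}}]$ (with $\tau$ the image of $t$) by a number of generators independent of $m$; geometrically, $\overline\sigma$ induces a finite surjective self-map of $\operatorname{Supp}A\subseteq\operatorname{Spec}R\times\mathbb{G}_m$ lying over the $k$-th power map of $\mathbb{G}_m$. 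Replacing $\psi$ by $\psi^2$ we may assume $\sigma(t)=u\,t^{+k}$, and by the $t\leftrightarrow t^{-1}$ symmetry of the hypotheses the criterion reduces us to proving that $A$ is finitely generated over $R[t]$. When $R=F$ is a field this is immediate: $\Lambda=F[t,t^{-1}]$ is a PID, so $A\cong F[t,t^{-1}]^{r}\oplus(\text{torsion})$; the torsion submodule $T$ is $\psi$-invariant (if $\lambda a=0$ with $\lambda\ne 0$ then $\sigma(\lambda)\psi(a)=0$ and $\sigma(\lambda)\ne 0$), so $\psi$ induces a surjection of $A/T\cong F[t,t^{-1}]^{r}$ onto itself whose image is contained in a submodule generated by $r$ elements over $\sigma(\Lambda)=F[t^{\pm k}]$, over which $F[t,t^{-1}]^{r}$ is free of rank $rk>r$ — impossible unless $r=0$; hence $A$ is torsion over $\Lambda$, i.e.\ finite over $F$.

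\emph{The main obstacle.} The difficulty is the passage from a field to a general Noetherian $R$. Dévissage along the powers of the nilradical of $R$ and along its minimal primes — legitimate since $\sigma|_R$ is an automorphism, so $\psi$ preserves the corresponding subquotients — reduces us to $R$ a Noetherian domain, and the $R$-torsion submodule of $A$ is then handled by Noetherian induction on $\dim R$, being a module over a proper quotient $R/I$ still carrying a compatible semilinear automorphism. What remains, and is the heart of the matter, is the case of an $R$-torsion-free finitely generated $\Lambda$-module $A$ over a domain $R$: the field case shows $A$ becomes finite-dimensional over $\operatorname{Frac}(R)$, so $\operatorname{Supp}A$ is generically finite over $\operatorname{Spec}R$, and one must exclude that $\overline{\operatorname{Supp}A}$ (closure in $\operatorname{Spec}R\times\mathbb{P}^1$) meets the sections $t=0$ or $t=\infty$ over some closed subset of $\operatorname{Spec}R$ — precisely the degeneration (unbounded ``denominators'' in $t$) that destroys finite generation over $R[t]$ and over $R[t^{-1}]$ for general modules. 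Here the bijectivity of $\psi$ must be used in full: the $k$-th power map contracts toward $t=0$ and $t=\infty$ with ramification $k\ge 2$, while $\psi$ forces the ``defect'' of $\operatorname{Supp}A$ from being proper over $\operatorname{Spec}R$ to be divisible by $k^{m}$ for every $m$, hence $0$. Making this divisibility estimate precise — for which I would specialize along a discrete valuation of $R$ via the valuative criterion of properness and analyze the semilinear structure over that DVR — is exactly the finite-generation criterion over commutative Noetherian rings advertised in the introduction, and is where I expect the real work to lie.
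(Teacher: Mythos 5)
Your topological reduction is correct and matches the paper's: both pass through Wall's finiteness theorems and reduce to showing each $H_j(\widetilde X;\mathbb Z)=H_j(X_\infty;\mathbb Z[G])$ is finitely generated over $R=\mathbb Z[G]$, given that it is finitely generated over $\Lambda=R[t,t^{-1}]$ and carries the $\sigma$-semilinear automorphism induced by a lift of $h$. Your $R[t]\cap R[t^{-1}]$ criterion and your treatment of the field case (the rank argument showing the free part of $A/T$ over the PID $\kappa(P)[t,t^{-1}]$ must vanish) are both correct, and the latter is close in spirit to the paper's Lemma~\ref{lem_vector_space}, though the paper obtains finite-dimensionality over $\kappa(P)$ from a cell-count bound (Lemma~\ref{lem_dimension_bound}) rather than from the semilinear rank argument. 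Your reformulation into a single purely algebraic Hilfssatz --- a bijective $\sigma$-semilinear endomorphism with $\sigma(t)=ut^{\pm k}$, $k\ge 2$, forces finite generation over $R$ --- is a clean and, if true, strictly stronger statement than what the paper proves, since it would eliminate the topological input entirely.

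The gap is that you do not prove the Hilfssatz, and the part you leave open is precisely the core of the theorem. Your dévissage outline has issues that are not merely cosmetic: the $R$-torsion submodule of $A$ is not obviously a module over a proper quotient $R/I$ (you need first to establish that it is finitely generated over $\Lambda$ so that its $R$-annihilator is nonzero, which requires first reducing to $R$ a domain), and your reduction ``by the $t\leftrightarrow t^{-1}$ symmetry'' to finite generation over $R[t]$ leaves the hardest case --- a torsion-free module over a domain --- completely unaddressed beyond a heuristic about $\operatorname{Supp}A$ not meeting $t=0,\infty$. You correctly guess that a valuation argument is needed, but the paper's actual route is different and more structured: it formulates an eigenvalue criterion (Theorem~\ref{thm_noether}) stating that a finitely generated $S$-module $\mathfrak M$ is finitely generated over $R$ iff for every associated prime $P$, $\kappa(P)\otimes_R\mathfrak M$ is finite-dimensional and the $t$-eigenvalues on it are integral over $R/P$, reduces via primary decomposition to the case $\operatorname{Ass}_S(\mathfrak L)=\{Q\}$, and then verifies the two hypotheses separately --- finite-dimensionality via the topological cell count, and integrality via Proposition~\ref{prop_eigenvalue}, which shows $v_i(\lambda)$ is divisible by $k^m$ for all $m$ using the Mori--Nagata theorem to realize the integral closure $\overline R$ inside the splitting field as a Krull domain with its canonical family of discrete valuations. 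The paper also needs a universal-coefficient spectral sequence to pass between $H_j(X_\infty;\kappa(P))$ and $\kappa(P)\otimes_R H_j(X_\infty;R)$, a step absent from your outline. In short: your reduction, your criterion, and your field case are all correct, but the passage from a field to a general Noetherian domain --- the ``real work,'' as you call it --- is not carried out, and the strategy you sketch for it would still need the Krull-domain/valuation machinery (or an equivalent) that the paper supplies in Section~\ref{sec_algebra}.
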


\begin{remark}
The assumption $h_{\sharp} (G\times 0) = G \times 0$ in Theorem \ref{thm_cw_homotopy} is reflected in the following geometric picture. Suppose $X$ is a fiber bundle over $S^{1}$ with fiber $F$ and $\pi_{1} (F) = G \times 0$. Then $X_{k}$ is also a bundle over $S^{1}$ with the same fiber. If the map $h: X \rightarrow X_{k}$ preserves the bundle structure, then $h_{\sharp} (G \times 0) = G \times 0$.
\end{remark}

\begin{remark}\label{rem_finite}
In Theorem \ref{thm_cw_homotopy}, even if $X$ is homotopy equivalent to a finite CW complex, the conclusion can not be strengthened to ``$X_{\infty}$ is homotopy equivalent to a finite CW complex". For example, if $Y$ is finitely dominated but not homotopy equivalent to a finite CW complex, then $X = Y \times S^{1}$ is homotopy equivalent to a finite CW complex with $X_{\infty} \simeq Y$.  In Theorem \ref{thm_no_finite}, we present a closed smooth manifold as an additional counterexample.
\end{remark}

Now we consider self-covering manifolds under the same assumption. Namely, let $M$ be a connected closed CAT manifold with fundamental group $\pi_1(M) = G \times \mathbb Z$, where $G$ is an abelian group. Let $M_{k}$ be the finite cyclic cover of $M$ with $\pi_{1} (M_{k}) = G \times (k \mathbb{Z})$. Assume that there exists a homotopy equivalence $h: M \rightarrow M_{k}$ for some $k>1$ such that the induced isomorphism $h_{\sharp} \colon \pi_1(M) \to \pi_1(M_k)$ satisfies $h_{\sharp}(G \times 0)=G \times 0$. In this case, the composition
\begin{equation}\label{eqn_deck}
\mathbb{Z} \rightarrow G \times \mathbb{Z} \overset{h_{\sharp}}{\longrightarrow} G \times (k\mathbb{Z}) \rightarrow k\mathbb{Z}.
\end{equation}
is an isomorphism, and hence the image of $1 \in \mathbb Z$ is either $k$ or $-k$.

\begin{definition}\label{def_deck}
If the composition of (\ref{eqn_deck}) maps 1 to $k$, we say that $h$ \emph{preserves the orientation of the deck transformations}. Otherwise, we say that $h$ \emph{reverses the orientation of the deck transformations}.
\end{definition}

\begin{bigthm}\label{thm_high_manifold}
Let $M$, $G$ and $h: M \rightarrow M_{k}$ be defined as above. Suppose that $G = \mathbb{Z}^{r}$ for some $r \geq 0$, and $\dim M \geq 5$. When $\dim M =5$, we further assume that $G=0$ and the category is TOP. Then the following statements hold.
\begin{enumerate}
\item The manifold $M$ is a CAT fiber bundle over $S^{1}$ with connected fiber $F$ and $\pi_{1} (F) = G \times 0$.

\item There is a homotopy equivalence $g: F \rightarrow F$ such that $f^{k} g$ is homotopic to $gf$ (resp. $g f^{-1}$) when $h$ preserves (resp. reverses) the orientation of the deck transformations, where $f: F\to F$ is the monodromy action.

\item  Furthermore, if $h$ is a CAT isomorphism, then there is a CAT automorphism $g \colon F \to F$ such that $f^{k} g$ is CAT pseudo-isotopic to $gf$ (resp. $g f^{-1}$) when $h$ preserves (resp. reverses) the orientation of the deck transformations.
\end{enumerate}
\end{bigthm}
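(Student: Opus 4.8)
The plan is to use Theorem~\ref{thm_cw_homotopy} to promote the self-covering hypothesis into homotopy finiteness of the infinite cyclic cover, and then to feed this into the classical fibering machinery of Browder--Levine, Farrell and Siebenmann; the point is that for $G=\mathbb{Z}^r$ every algebraic $K$-theory obstruction that occurs along the way vanishes. Concretely, let $\phi\colon\pi_1(M)=G\times\mathbb{Z}\to\mathbb{Z}$ be the projection and $M_\infty$ the associated infinite cyclic cover, so $\pi_1(M_\infty)=G=\mathbb{Z}^r$. Being a closed manifold, $M$ is homotopy equivalent to a finite CW complex, hence finitely dominated; since $h_\sharp(G\times 0)=G\times 0$ and $k>1$, Theorem~\ref{thm_cw_homotopy} applied with $X=M$ shows $M_\infty$ is finitely dominated. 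Its Wall finiteness obstruction lies in $\widetilde{K}_0(\mathbb{Z}[\mathbb{Z}^r])$, which vanishes by the Bass--Heller--Swan theorem, so $M_\infty$ is homotopy equivalent to a finite CW complex; moreover $\ker\phi=\mathbb{Z}^r$ is finitely presented.

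These are exactly the hypotheses of Farrell's fibering theorem \cite{Farrell} (for $\dim M\ge 6$; when $r=0$ one may instead invoke Browder--Levine \cite{Browder_Levine}, with Siebenmann \cite{Siebenmann} supplying the open-manifold input). Farrell's obstruction to fibering lies in $\mathrm{Wh}(\pi_1 M)=\mathrm{Wh}(\mathbb{Z}^{r+1})$, which again vanishes by Bass--Heller--Swan (and the relevant Nil groups vanish since $\mathbb{Z}[\mathbb{Z}^{r+1}]$ is regular), so $M$ is a CAT fiber bundle over $S^1$ inducing $\phi$, with connected fiber $F$, $\pi_1(F)=G\times 0$, and monodromy a CAT automorphism $f\colon F\to F$; this is part~(1), and we identify $M$ with the mapping torus $T(f)$. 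When $\dim M=5$ we are in TOP with $\pi_1(M)=\mathbb{Z}$ and simply connected fiber, and the same scheme goes through using the surgery and $s$-cobordism theorems available in dimensions $\le 5$, which apply here because all the fundamental groups occurring are trivial, hence good in the sense of Freedman--Quinn.

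For parts~(2) and~(3), note that $M_k$ is CAT isomorphic to $T(f^k)$ with its own bundle projection $p_k\colon M_k\to S^1$ whose fiber is again $F$. Using the computation in \eqref{eqn_deck}, one checks on $\pi_1$ that $p_k\circ h\colon M\to S^1=K(\mathbb{Z},1)$ represents $[p]\in H^1(M;\mathbb{Z})$ when $h$ preserves the orientation of the deck transformations, and $-[p]$ when $h$ reverses it. Thus, after a homotopy, $h$ is a fiberwise map over $S^1$ (possibly covering the degree $-1$ self-map of the base), so restricting to a fiber gives a homotopy equivalence $g\colon F\to F$, and comparing monodromies yields $f^k g\simeq gf$ (resp. $f^k g\simeq gf^{-1}$); this proves~(2). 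If $h$ is a CAT isomorphism, transport $p_k$ back along $h$ to a second CAT bundle projection $\phi'=p_k\circ h\colon M\to S^1$, whose fiber is CAT isomorphic to $F$ and whose monodromy is CAT conjugate to $f^k$. Since $[\phi']=\pm[p]$ by the above, the uniqueness part of the fibering theorem — once more using $\mathrm{Wh}(\mathbb{Z}^{r+1})=0$, so that the $h$-cobordisms comparing the two cuts of $M$ are CAT products — shows $f^k$ is CAT pseudo-isotopic to $gf^{\pm1}g^{-1}$ for a suitable CAT isomorphism $g\colon F\to F$. Composing with $g$ on the right gives $f^k g$ CAT pseudo-isotopic to $gf$ (resp. $gf^{-1}$), proving~(3).

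The conceptual heart is the first step: extracting homotopy finiteness of $M_\infty$ from the self-covering hypothesis via Theorem~\ref{thm_cw_homotopy}. The remaining difficulties are technical rather than conceptual: handling the $\dim M=5$ case inside the range where the topological $s$-cobordism theorem is valid, and carefully tracking the orientation of the deck transformations through the identifications $M_k\cong T(f^k)$ and $\phi'$, so that the relation comes out precisely as $f^k g\simeq g f^{\pm1}$ rather than in a merely conjugate form. The hypothesis $G=\mathbb{Z}^r$ is used exactly to force $\widetilde{K}_0(\mathbb{Z} G)$, $\mathrm{Wh}(\mathbb{Z}[\pi_1 M])$ and the associated Nil groups to vanish, so that finite domination upgrades to homotopy finiteness and every fibering and uniqueness obstruction disappears.
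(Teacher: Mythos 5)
Your strategy is essentially the paper's, and for parts (1)--(2) in dimension $\geq 6$ it is essentially correct. The paper in fact notes that Farrell's fibration theorem would do the job in that range, but opts for Cappell's splitting theorem (applied to a homotopy equivalence $\Phi: M \to E$, where $E$ is the mapping torus of the deck transformation on $M_\infty$) so as to treat all dimensions uniformly; your use of $\widetilde K_0(\mathbb Z[\mathbb Z^r]) = 0$, $\mathrm{Wh}(\mathbb Z^{r+1}) = 0$, and the vanishing of Nil is the same bookkeeping. For part (2), your argument (homotope $h$ to a fiberwise map using the lifting property of $p_k$) is fine, though somewhat heavier than the paper's direct construction $g = p_1 \circ \overline{h} \circ i$.

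There are, however, two places where you are waving rather than proving. First, in dimension $5$ the fiber is a $4$-dimensional Poincar\'e complex, and before any splitting or fibering machinery can produce an actual submanifold you need to know that this Poincar\'e complex is realized by a closed TOP $4$-manifold. ``Good fundamental group'' alone does not give you this; the paper explicitly invokes Freedman's realization theorem together with Milnor--Husemoller's classification of symmetric unimodular forms, a concrete input your outline omits. Second, in part (3) you appeal to ``the uniqueness part of the fibering theorem'' to conclude that the two fibrations of $M$ (the original one and $p_k \circ h$) are related by a pseudo-isotopy. That uniqueness statement is not an off-the-shelf citation in the generality you need (including the $5$-dimensional TOP case); in the paper it is a separate proposition (Proposition \ref{prop_fiber}) whose proof requires a nontrivial argument -- producing a bicollared $h$-cobordism via Wall's construction, verifying via a van Kampen computation that the cut cobordism is a relative $s$-cobordism, and then applying the (relative) $s$-cobordism theorem. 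You would need to supply or cite that argument precisely rather than referring to it as a known black box.
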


Theorem \ref{thm_high_manifold} gives a positive answer to Question \ref{que_general} in dimension $> 5$ under the assumption that $\pi_{1} (M) = \mathbb{Z}^{r} \times \mathbb{Z}$ and $h_{\sharp} (\mathbb{Z}^{r} \times 0) = \mathbb{Z}^{r} \times 0$, and in dimension $5$ as well for the TOP category when $\pi_{1} (M) = \mathbb{Z}$.

\begin{remark}
When $\dim M =5$, even if $G=0$, the conclusion of Theorem \ref{thm_high_manifold} is no longer true for DIFF and PL, see Theorem \ref{thm_5_diff}.
\end{remark}

Furthermore, the monodromy $f$ in Theorem \ref{thm_high_manifold} has certain homological periodicity (Corollary \ref{cor_period}, which is a consequence of Theorem \ref{thm_cw_period}), and for special classes of manifolds, $f$ is indeed periodic up to isotopy (Corollary \ref{cor_5_top} and Corollary \ref{cor_monodromy}).

\begin{corollary}\label{cor_period}
Let $M$, $h \colon M \rightarrow M_{k}$, and $f \colon F \rightarrow F$ be as in Theorem \ref{thm_high_manifold}, where $h$ is a homotopy equivalence. Then there exist positive integers $m$ prime to $k$, and $l$ such that
\[
f_{*}^{m} =\id: H_{\bullet} (F; \mathbb{Z}) /T \rightarrow H_{\bullet} (F; \mathbb{Z}) /T \quad \text{and} \quad f_{*}^{l} = \id: H_{\bullet} (F; \mathbb{Z}) \rightarrow H_{\bullet} (F; \mathbb{Z}),
\]
where  $H_{\bullet} (F; \mathbb{Z}) = \oplus_{j=0}^{\infty} H_j(F;\mathbb Z)$ and $T$ is the torsion subgroup of $H_{\bullet} (F; \mathbb{Z})$.
\end{corollary}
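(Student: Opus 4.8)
The plan is to deduce this from the periodicity theorem for infinite cyclic covers of CW complexes, Theorem \ref{thm_cw_period}, applied to $X=M$. First I would recall from Theorem \ref{thm_high_manifold}(1) that $M$ is a CAT fiber bundle over $S^{1}$ with connected closed fiber $F$ and $\pi_{1}(F)=G\times 0$; the long exact sequence of the fibration $F\to M\to S^{1}$ then shows that the surjection $\pi_{1}(M)=G\times\mathbb{Z}\to\pi_{1}(S^{1})=\mathbb{Z}$ induced by the bundle projection is injective on $\pi_{1}(F)$ and has kernel exactly $G\times 0$. Pulling the bundle back along the universal cover $\mathbb{R}\to S^{1}$ therefore identifies the infinite cyclic cover $M_{\infty}$ (the one with $\pi_{1}(M_{\infty})=G\times 0$) with $F\times\mathbb{R}$, and hence gives a homotopy equivalence $M_{\infty}\simeq F$ carrying the generating deck transformation $\tau$ of $M_{\infty}\to M$, up to homotopy, to the monodromy $f$ (or to $f^{-1}$, depending on the chosen generator of $\mathbb{Z}$ — harmless for a periodicity statement). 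In particular $\tau_{*}$ and $f_{*}$ correspond under the induced isomorphism $H_{\bullet}(M_{\infty};\mathbb{Z})\cong H_{\bullet}(F;\mathbb{Z})$.

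Next I would check that $X=M$ meets the hypotheses of Theorem \ref{thm_cw_period}: $M$ has the homotopy type of a connected CW complex with $\pi_{1}(M)=G\times\mathbb{Z}$ and $G=\mathbb{Z}^{r}$ finitely generated abelian, and by hypothesis there is a homotopy equivalence $h\colon M\to M_{k}$ with $k>1$ and $h_{\sharp}(G\times 0)=G\times 0$. Any finiteness requirement on $X$ or $X_{\infty}$ that Theorem \ref{thm_cw_period} may impose is satisfied, since $M$ is a closed manifold (hence homotopy finite) and, by Theorem \ref{thm_cw_homotopy}, $M_{\infty}$ is of finite type; indeed $M_{\infty}\simeq F$ with $F$ a finite complex. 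Theorem \ref{thm_cw_period} then produces positive integers $m$, prime to $k$, and $l$, with $\tau_{*}^{m}=\id$ on $H_{\bullet}(M_{\infty};\mathbb{Z})/T$ and $\tau_{*}^{l}=\id$ on $H_{\bullet}(M_{\infty};\mathbb{Z})$, where $T$ is the torsion subgroup.

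Finally I would transport these identities along the homotopy equivalence $M_{\infty}\simeq F$ of the first step. Since $\tau_{*}$ corresponds to $f_{*}^{\pm 1}$, and periodicity is unaffected by inverting, this yields $f_{*}^{m}=\id$ on $H_{\bullet}(F;\mathbb{Z})/T$ and $f_{*}^{l}=\id$ on $H_{\bullet}(F;\mathbb{Z})$. Here $H_{\bullet}(F;\mathbb{Z})=\bigoplus_{j\ge 0}H_{j}(F;\mathbb{Z})$ is a finitely generated abelian group, finitely generated in each degree and supported in degrees $0\le j\le\dim F$ because $F$ is a closed manifold, so if Theorem \ref{thm_cw_period} is phrased degreewise one simply takes the least common multiple of the finitely many degreewise periods (which remains prime to $k$ in the first case). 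The only real subtlety I anticipate is the identification in the first step — pinning down that the infinite cyclic cover with $\pi_{1}=G\times 0$ is homotopy equivalent to $F$ with $\tau$ acting as the monodromy — together with verifying whichever finiteness hypothesis Theorem \ref{thm_cw_period} needs. Note that the relation $f^{k}g\simeq gf$ from Theorem \ref{thm_high_manifold}(2) by itself only shows that $f_{*}^{k}$ is conjugate to $f_{*}$ on $H_{\bullet}(F;\mathbb{Z})$, whence its eigenvalues modulo torsion are roots of unity; the passage to genuine finite order uses the integral structure, and that substantive work is carried out in Theorem \ref{thm_cw_period} and the commutative-algebra criterion behind it, which we are free to invoke here.
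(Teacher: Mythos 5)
Your proposal is correct and follows essentially the same route as the paper: apply Theorem \ref{thm_cw_period} to $X=M$ with $\varphi\colon G\times\mathbb Z\to\mathbb Z$ the projection, use the identification $M_{\infty}\simeq F\times\mathbb R$ from the proof of Theorem \ref{thm_high_manifold}(2) to carry the deck transformation $d$ to the monodromy $f$, then combine the degreewise periods (the paper takes the product over $0\le j\le\dim F$; your l.c.m.\ is equivalent). Your anticipation that Theorem \ref{thm_cw_period} is stated degreewise, and that finiteness is satisfied because $M$ is a closed manifold, both check out.
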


\begin{corollary}\label{cor_5_top}
Let $M$, $h$, $F$ and  $f$ be as in Theorem \ref{thm_high_manifold} with $\dim M =5$ (hence CAT = TOP and $G=0$), where $h$ is a homotopy equivalence.  Then $f^m$ is topologically isotopic to $\mathrm{id}_F$ for some $m>0$ prime to $k$.
\end{corollary}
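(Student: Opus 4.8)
The plan is to reduce the statement to the classical topological isotopy theorem for closed simply connected $4$-manifolds, using the fibering of $M$ together with the homological periodicity that has already been extracted in Corollary \ref{cor_period}.

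First I would apply Theorem \ref{thm_high_manifold}(1) in the relevant case $\dim M = 5$, $G = 0$, CAT $=$ TOP (with $h$ a homotopy equivalence): this exhibits $M$ as a topological fiber bundle over $S^{1}$ with connected fiber $F$ satisfying $\pi_{1}(F) = G\times 0 = 0$, so $F$ is a closed simply connected topological $4$-manifold and the monodromy $f\colon F\to F$ is a self-homeomorphism of $F$. Next I would record that $H_{\bullet}(F;\mathbb Z)$ is torsion free: since $F$ is a simply connected closed $4$-manifold one has $H_{0}(F)=H_{4}(F)=\mathbb Z$, $H_{1}(F)=H_{3}(F)=0$, and $H_{2}(F)$ is free abelian by Poincar\'e duality and the universal coefficient theorem. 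Consequently the torsion subgroup $T$ appearing in Corollary \ref{cor_period} is trivial, and that corollary yields an integer $m>0$ \emph{prime to} $k$ with $f_{*}^{m}=\id$ on $H_{\bullet}(F;\mathbb Z)=H_{\bullet}(F;\mathbb Z)/T$. In particular $f^{m}$ is an orientation-preserving self-homeomorphism of $F$ inducing the identity on $H_{2}(F;\mathbb Z)$.

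It then remains to invoke the topological isotopy theorem for closed simply connected $4$-manifolds (Quinn; Perron; see also the account in Freedman--Quinn): the homomorphism $\pi_{0}\,\mathrm{Homeo}(F)\to\mathrm{Aut}\bigl(H_{2}(F;\mathbb Z),\lambda_{F}\bigr)$ recording the action on middle homology with its intersection form is injective, equivalently, any self-homeomorphism of $F$ acting trivially on $H_{2}(F;\mathbb Z)$ is topologically isotopic to $\id_{F}$. Applying this to $f^{m}$ gives exactly the assertion.

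The deduction is short, and the one genuinely deep ingredient is the $4$-dimensional topological isotopy theorem; this is also precisely why the corollary must be stated in the category TOP, since its smooth analogue fails. The only point that needs a line of care is the bookkeeping around Corollary \ref{cor_period}: one must use that the vanishing of the torsion subgroup $T$ for a simply connected closed $4$-manifold forces the periodicity exponent \emph{prime to} $k$ (rather than the a priori larger exponent $l$ of that corollary) to act as the identity on all of $H_{\bullet}(F;\mathbb Z)$, and hence on $H_{2}(F;\mathbb Z)$; once this is noted there is no further obstacle.
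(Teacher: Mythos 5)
Your argument is correct and coincides with the paper's own proof: both show $F$ is a simply connected closed $4$-manifold with torsion-free homology, deduce from Corollary \ref{cor_period} (with $T=0$) that $f_{*}^{m}=\id$ on $H_{\bullet}(F;\mathbb{Z})$ for some $m>0$ prime to $k$, and then invoke the topological isotopy theorem of Freedman--Quinn. Your remark about why the prime-to-$k$ exponent $m$ suffices once $T$ vanishes is exactly the bookkeeping the paper implicitly performs.
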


Recall that a smooth manifold is almost parallelizable if the tangent bundle is trivial outside an arbitrary point.
\begin{corollary}\label{cor_monodromy}
Let $M$, $h$, $F$ and  $f$ be as in Theorem \ref{thm_high_manifold}. Assume $M$ is a smooth $(2n+1)$-manifold with $n \geq 3$, $\pi_{1} (M) = \mathbb{Z}$ and $\pi_{i} (M) =0$ for all $1<i<n$. Assume $h$ is a diffeomorphism and $F$ is almost parallelizable, then $f^l$ is smooth isotopic to $\mathrm{id}_F$ for some $l >0$.
\end{corollary}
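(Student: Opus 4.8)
The plan is to reduce the corollary to Kreck's classification of isotopy classes of diffeomorphisms of highly connected almost parallelizable manifolds, and then to eliminate the ``infinite order'' part of the mapping class group of $F$ by combining the self-covering relation provided by Theorem~\ref{thm_high_manifold}(3) with the homological periodicity of Corollary~\ref{cor_period} and an elementary eigenvalue argument.

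First I would analyse the fiber $F$ furnished by Theorem~\ref{thm_high_manifold}. Since $\pi_1(M)=\mathbb Z$ we have $G=0$, so $F$ is simply connected; and from the homotopy exact sequence of the fibration $F\hookrightarrow M\to S^1$, together with the hypothesis $\pi_i(M)=0$ for $1<i<n$ and the vanishing of $\pi_j(S^1)$ for $j\geq 2$, one gets $\pi_i(F)\cong\pi_i(M)=0$ for $2\leq i\leq n-1$. Hence $F$ is a closed $(n-1)$-connected smooth $2n$-manifold with $n\geq 3$, and by hypothesis it is almost parallelizable; this is precisely the setting of a theorem of Kreck on isotopy classes of diffeomorphisms of highly connected manifolds. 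Applying Theorem~\ref{thm_high_manifold}(3) to the diffeomorphism $h$ next yields a diffeomorphism $g\colon F\to F$ and a sign $\epsilon\in\{+1,-1\}$ such that $f^{k}g$ is smoothly pseudo-isotopic to $gf^{\epsilon}$. As $\dim F=2n\geq 6$ and $F$ is simply connected, Cerf's pseudo-isotopy theorem upgrades this to an isotopy, so in the mapping class group $\Gamma:=\pi_0\mathrm{Diff}(F)$ we get the relation $[f]^{k}=[g]\,[f]^{\epsilon}\,[g]^{-1}$.

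Then I would invoke Kreck's computation to control the subgroup $I\leq\Gamma$ consisting of the isotopy classes that act as the identity on $H_\bullet(F;\mathbb Z)$; it is normal, being the kernel of the homology representation $\Gamma\to\mathrm{Aut}(H_\bullet(F;\mathbb Z))$. For an $(n-1)$-connected almost parallelizable smooth $2n$-manifold with $n\geq 3$, Kreck's theorem exhibits $I$ as a finitely generated abelian group, built from the finite group $\Theta_{2n+1}$ of homotopy $(2n{+}1)$-spheres and a subgroup of $\mathrm{Hom}\bigl(H_n(F;\mathbb Z),\pi\bigr)$ for a finitely generated homotopy group $\pi$ of the orthogonal group. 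By Corollary~\ref{cor_period} there is an $l_0>0$ with $f_*^{\,l_0}=\mathrm{id}$ on $H_\bullet(F;\mathbb Z)$, so $a:=[f^{l_0}]\in I$. Raising the relation $[f]^{k}=[g][f]^{\epsilon}[g]^{-1}$ to the $l_0$-th power and using $[f]^{l_0}=[f^{l_0}]$ gives $a^{k}=[g]\,a^{\epsilon}\,[g]^{-1}$. Since $I$ is normal in $\Gamma$, conjugation by $[g]$ restricts to an automorphism of $I$, hence to an automorphism $A\in\mathrm{GL}_s(\mathbb Z)$ of the free abelian group $I/\mathrm{tors}\cong\mathbb Z^{s}$, and the relation descends to $A\bar a=\epsilon k\,\bar a$, where $\bar a$ is the image of $a$. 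If $\bar a\neq 0$ then $\epsilon k$ is a rational eigenvalue of $A$; but an element of $\mathrm{GL}_s(\mathbb Z)$ has no rational eigenvalue other than $\pm 1$ (its characteristic polynomial is monic with constant term $\pm 1$), and $|\epsilon k|=k>1$, a contradiction. Therefore $\bar a=0$, i.e.\ $a$ has finite order, say $N$, in $I$; then $f^{l}$ is smoothly isotopic to $\mathrm{id}_F$ for $l=N l_0$.

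I expect the main obstacle to be the accurate invocation of Kreck's theorem: one must check that the fiber $F$ produced by Theorem~\ref{thm_high_manifold} genuinely meets its hypotheses (the connectivity via the homotopy exact sequence above, the almost-parallelizability being a standing assumption of the corollary), and then read off from his rather intricate exact sequence exactly the two facts used here --- that the homologically trivial mapping classes form a finitely generated abelian group, and that conjugation by $[g]$ acts on it through an integral matrix. The other ingredients are routine: Cerf's theorem in dimension $\geq 6$ to replace pseudo-isotopy by isotopy, and the elementary fact that a matrix in $\mathrm{GL}_s(\mathbb Z)$ cannot have a rational eigenvalue of absolute value exceeding $1$.
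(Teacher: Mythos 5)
Your proposal is correct and follows essentially the same route as the paper: verify via the homotopy exact sequence of $F\hookrightarrow M\to S^1$ that $F$ is a closed $(n-1)$-connected almost parallelizable $2n$-manifold, upgrade the pseudo-isotopy relation from Theorem~\ref{thm_high_manifold}(3) to an isotopy by Cerf, place $[f^{l_0}]$ in the Torelli group via Corollary~\ref{cor_period}, and then appeal to Kreck's structure theorem for $\mathcal T(F)$ together with an elementary argument in $\mathrm{GL}_s(\mathbb Z)$ to conclude that $[f^{l_0}]$ has finite order. The only differences are cosmetic: the paper replaces $g$ by $g^2$ to normalize both the orientation and the sign $\epsilon$ simultaneously, while you keep $\epsilon$; and the paper's Lemma~\ref{lem_torelli} closes the argument with a gcd computation in $A/T\cong\mathbb Z^u$, whereas you use the equivalent observation that a matrix in $\mathrm{GL}_s(\mathbb Z)$ has no rational eigenvalue of absolute value exceeding $1$. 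One small imprecision to flag: Kreck's Theorem~2 does not give that $\mathcal T(F)$ itself is abelian, only that it fits into an extension $0\to C\to\mathcal T(F)\to A\to 0$ with $C$ finite and central in $\mathcal M(F)$ and $A$ finitely generated abelian. This still yields exactly what you need, since $[\mathcal T(F),\mathcal T(F)]\subseteq C$ is finite, hence $\mathcal T(F)$ is finitely generated nilpotent, its torsion elements form a characteristic subgroup containing $C$, and $\mathcal T(F)/\mathrm{tors}$ is torsion-free abelian, hence free abelian and identified with $A/T$. With that correction your eigenvalue argument goes through verbatim.
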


\begin{remark}
In Corollary \ref{cor_monodromy}, to fulfill the assumption that $F$ is almost parallelizable, it suffices to assume instead either $M$ is almost parallelizable or $n=3$. Actually, if $n=3$, then $F$ is $2$-connected and $\dim F= 6$, and hence $F$ is a connected sum of copies of $S^{3} \times S^{3}$ (c.f. \cite[Theorem~1]{Wall66}).
\end{remark}

Note that, if $f^{l}$ is isotopic to $\mathrm{id}_{F}$ for some $l>0$, then $M_{l}$ is a trivial bundle, and $M_{l+q}$ is isomorphic to $M_{q}$ as bundles for all $q \geq 1$, i.e., the cyclic covering spaces $\{ M_i \ | \ i >0\}$ of $M$ are periodic.

\medskip

The conclusion of Theorem  \ref{thm_high_manifold} may fail if we drop the assumption that $G = \mathbb{Z}^{r}$. There exist self-covering manifolds $M$ such that $M_{\infty}$ are not homotopy equivalent to finite CW complexes (Theorem \ref{thm_no_finite}). There also exist self-covering manifolds $M$ which are not fiber bundles over a circle, though $M_{\infty}$ are homotopy equivalent to  finite CW complexes (Theorem \ref{thm_no_fiber}). In the statement of the following theorems, we use the standard notations in  number theory  (cf.~\cite[Theorem 4.10]{Washington}): for a prime number $p$, $h_{p}^{-}$ (resp. $h_{p}^{+}$) is the first (resp. second) factor of the class number of $\mathbb{Z} [\xi_{p}]$, where $\xi_{p} = \exp (2 \pi \sqrt{-1}/p)$.

\begin{bigthm}\label{thm_no_finite}
Let $p$ be a prime number such that both $h_{p}^{-}$ and $h_{p}^{+}$ have odd prime factors. Then for each integer $n \geq 9$, there exists a connected closed smooth $n$-manifold $M$ such that:
\begin{enumerate}
\item $M$ is stably parallelizable and $\pi_{1} (M) = \mathbb{Z}/p \times \mathbb{Z}$;

\item $M$ is diffeomorphic to a finite cover $M_{k}$ with $\pi_{1} (M_{k}) = \mathbb{Z}/p \times (k \mathbb{Z})$ for some $k>1$;

\item The infinite cyclic cover $M_{\infty}$ with $\pi_1(M_{\infty})= \mathbb Z/p \times 0$ is not homotopy equivalent to any finite CW complex.
\end{enumerate}
\end{bigthm}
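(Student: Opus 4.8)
The strategy is to build a fiber bundle $M \to S^1$ with fiber $F$ a manifold with $\pi_1(F) = \mathbb{Z}/p$, monodromy $f$ homotopic (but not isotopic, or with nontrivial obstruction) to the identity, so that $M$ is diffeomorphic to $M_k$ for a suitable $k$, yet the Wall finiteness obstruction of $M_\infty \simeq F$ is nonzero. The first step is to find a finitely dominated complex, or better a closed smooth manifold $F$ with $\pi_1(F) = \mathbb{Z}/p$, whose Wall obstruction $\widetilde{K}_0(\mathbb{Z}[\mathbb{Z}/p])$-class is nonzero; since $F$ itself is a closed manifold it has vanishing obstruction, so instead $F$ must be an \emph{open} or noncompact piece, and $M_\infty$ (the infinite cyclic cover) is the object with nonzero obstruction. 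Concretely, I would take $M$ to be the mapping torus of a self-homotopy-equivalence $f$ of a \emph{finitely dominated} $CW$ complex realized as the homotopy type of a closed manifold after thickening; the infinite cyclic cover $M_\infty$ is then the homotopy colimit of the $f$-tower and its finiteness obstruction is governed by the Bass–Heller–Swan / Farrell–Hsiang decomposition of $\widetilde{K}_0$ and the Nil and $\widetilde{K}_{-1}$ terms of $\mathbb{Z}[\mathbb{Z}/p \times \mathbb{Z}]$.

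The role of the hypothesis that both $h_p^-$ and $h_p^+$ have odd prime factors is to guarantee that $\widetilde{K}_0(\mathbb{Z}[\mathbb{Z}/p])$ — whose order is the class number $h_p = h_p^- h_p^+$ up to the cyclotomic units contribution — contains an element of odd prime order that survives in the relevant quotient controlling the obstruction of $M_\infty$; the oddness is presumably needed to separate this class from $2$-torsion coming from Whitehead-group or normal-invariant ambiguities, and to ensure the obstruction is not killed when one passes from the homotopy category to the smooth category (surgery with $\pi_1 = \mathbb{Z}/p$, odd order, is relatively clean). So the second step is a $K$-theoretic computation: identify the subgroup of $\widetilde{K}_0(\mathbb{Z}[\mathbb{Z}/p])$ realized by finiteness obstructions of infinite cyclic covers of closed manifolds that are themselves self-covering, and show it is nonzero exactly under the stated number-theoretic condition.

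The third step is the geometric realization. Starting from a finitely dominated complex $Y$ with $\sigma(Y) \neq 0 \in \widetilde{K}_0(\mathbb{Z}[\mathbb{Z}/p])$ admitting a self-map making it ``periodic'' in the appropriate sense (so that the mapping torus construction yields $M \cong M_k$), I would thicken $Y \times S^1$ to a smooth manifold of dimension $n \geq 9$ — the bound $n \geq 9$ comes from needing enough room for Wall's embedding/thickening theorems and for the surgery below the middle dimension to put $F$ and $f$ in standard form, plus a couple of extra dimensions because $\pi_1$ is nontrivial. One must check that $M$ can be taken stably parallelizable: this is arranged by choosing the thickening of the stably parallelizable model and noting that the obstruction to stable parallelizability lives in groups that vanish or can be corrected by connect-summing with homotopy spheres without affecting $\pi_1$ or the finiteness obstruction of $M_\infty$. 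Then $M$ is diffeomorphic to $M_k$ because the monodromy, being built from the periodic self-map of $Y$, satisfies $f^k \simeq f$ (or is pseudo-isotopic to it after the thickening), giving a bundle isomorphism $M \to M_k$ — and since $M$ is simply a mapping torus this isomorphism can be upgraded to a diffeomorphism using the $s$-cobordism theorem, the Whitehead torsion obstruction being controlled (or killed) in this range.

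\medskip

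\textbf{Expected main obstacle.} The hard part is the middle step: controlling precisely which classes in $\widetilde{K}_0(\mathbb{Z}[\mathbb{Z}/p])$ are realizable as $\sigma(M_\infty)$ for a \emph{self-covering} $M$ — the self-covering constraint forces the monodromy to be (homotopy-)periodic, which restricts the Nil-part and the action on $\widetilde{K}_0$ via the transfer, and one must show this restricted set still contains a nonzero element under the hypothesis on $h_p^\pm$. Matching the transfer/Frobenius action on $\widetilde{K}_0(\mathbb{Z}[\mathbb{Z}/p]) \cong$ (ideal class group data of $\mathbb{Z}[\xi_p]$) with the splitting into plus and minus parts, and verifying that an odd prime factor on \emph{each} side is exactly what is needed, is the delicate number-theoretic heart of the argument; the surgery-theoretic realization, while technical, is standard once the target $K$-theory class is pinned down.
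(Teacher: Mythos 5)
Your overall plan is in the right direction and matches the paper's strategy at a high level — start from a finitely dominated Poincar\'e complex $Y$ with $\pi_1(Y)=\mathbb{Z}/p$ and nonzero Wall obstruction, realize a product with $S^1$ as a closed manifold via surgery, and then arrange the self-covering — but three of the ideas that actually make the argument go through are missing or misidentified. First, the reason \emph{both} $h_p^-$ and $h_p^+$ need odd prime factors: Wall's realization produces $Y$ of formal dimension $d$ with $\sigma(Y)=\theta+(-1)^d\theta^*$, so one needs $\theta$ of odd order with $\theta^*=(-1)^d\theta$. Under the $K_0$-involution, which carries an extra sign relative to naive complex conjugation of ideals (the paper's equation (6.1)), the $(+1)$-eigenspace has order $h_p^-$ and the $(-1)$-eigenspace has order $h_p^+$; $h_p^-$ supplies the $d$ odd case and $h_p^+$ the $d$ even case, and both parities are needed to get all $n\geq 9$. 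Your guess that oddness is about separating from $2$-torsion in Whitehead groups or normal invariants is not where it enters. Second, you never mention the $\times S^2$ device that is essential twice over: Morgan's product formula for surgery obstructions is what kills the obstruction to realizing $Y\times S^1\times S^2$ as a smooth closed manifold, and Gersten's product formula gives $\sigma(M_\infty)=\chi(S^2\times S^2)\,\sigma(Y)=4\sigma(Y)$, so oddness of the order of $\sigma(Y)$ is precisely what makes $4\sigma(Y)\neq 0$.

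Third, your claim that ``since $M$ is simply a mapping torus this isomorphism can be upgraded to a diffeomorphism using the $s$-cobordism theorem'' is exactly where the real work hides. After surgery, $M'$ is a closed manifold merely \emph{homotopy equivalent} to $Y\times S^2\times S^1$; it is not a mapping torus (that is the whole point of the theorem), and a priori the homotopy equivalences $M'\to M'_k$ have nonzero Whitehead torsion. The paper devotes a chain of lemmas to beating this torsion down: passing to finite covers to kill the Nil-summands via transfer, a duality/Siebenmann-end argument to kill the $\widetilde K_0$-summand of the torsion, a separate argument to kill the $\mathrm{Wh}(\mathbb{Z}/p)$-part (differently in dimensions $7$ and $8$, the latter using a normal-bordism finiteness argument over $[M',G/O]$ and the freeness of $\mathrm{Wh}(\mathbb{Z}/p)$), and only \emph{then} a simple-homotopy surgery step crossed with $S^2$ to get an $s$-cobordism. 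So the proposal names the correct target but treats the most delicate step as routine, and it misattributes the role of the hypothesis on $h_p^{\pm}$.
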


\begin{bigthm}\label{thm_no_fiber}
Let $p$ be the one in Theorem \ref{thm_no_finite}. For each $n \geq 5$ and each connected closed smooth $(n-1)$-manifold $F$ with $\pi_{1} (F) = \mathbb{Z}/p$, there exists a connected closed smooth $n$-manifold $M$ such that:
\begin{enumerate}
\item $M$ is smooth $h$-cobordant to $F \times S^{1}$, hence $\pi_{1} (M) = \mathbb{Z}/p \times \mathbb{Z}$ and $M_{\infty}$ is homotopy equivalent to $F$;

\item $M$ is diffeomorphic to $M_{k}$ for all $k>1$;

\item For all $q \geq 1$, $M_{q}$ is not a topological fiber bundle over $S^{1}$.
\end{enumerate}
\end{bigthm}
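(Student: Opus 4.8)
\emph{Proof strategy.} Since $M_\infty$ must remain homotopy finite (unlike in Theorem~\ref{thm_no_finite}), I would not alter the homotopy type of the infinite cyclic cover at all, and instead take $M$ to be the top of a smooth $h$-cobordism over $F\times S^1$ whose Whitehead torsion sits in the reduced projective class group summand of the Bass--Heller--Swan splitting
\[
\mathrm{Wh}(\mathbb{Z}/p\times\mathbb{Z})\;\cong\;\mathrm{Wh}(\mathbb{Z}/p)\,\oplus\,\widetilde{K}_0(\mathbb{Z}[\mathbb{Z}/p])\,\oplus\,\widetilde{\mathrm{Nil}}\,\oplus\,\widetilde{\mathrm{Nil}},\qquad \widetilde{K}_0(\mathbb{Z}[\mathbb{Z}/p])\cong\mathrm{Cl}(\mathbb{Z}[\xi_p]).
\]
Fix $F$ and $n\ge 5$, fix $\tau\in\widetilde{K}_0(\mathbb{Z}[\mathbb{Z}/p])\subset\mathrm{Wh}(\mathbb{Z}/p\times\mathbb{Z})$ (to be specified in Step~(3)), use the realization part of the $s$-cobordism theorem (legitimate since $\dim(F\times S^1)=n\ge 5$) to produce a smooth $h$-cobordism $W$ from $F\times S^1$ with $\tau(W,F\times S^1)=\tau$, and set $M:=\partial_+W$. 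Then $M$ is smooth $h$-cobordant to $F\times S^1$, so $\pi_1(M)=\mathbb{Z}/p\times\mathbb{Z}$; and since the deformation retractions of $W$ onto $\partial_\pm W$ lift to the infinite cyclic cover $W_\infty$, we get $M_\infty\simeq W_\infty\simeq (F\times S^1)_\infty=F\times\mathbb{R}\simeq F$. This proves~(1).

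For~(2), the crucial fact is that the transfer $\mathrm{tr}_k\colon\mathrm{Wh}(\mathbb{Z}/p\times\mathbb{Z})\to\mathrm{Wh}(\mathbb{Z}/p\times k\mathbb{Z})\cong\mathrm{Wh}(\mathbb{Z}/p\times\mathbb{Z})$ associated with the index-$k$ subgroup restricts to the identity on the $\widetilde{K}_0$-summand. (Algebraically: a class $[P]$ is sent by the Bass--Heller--Swan inclusion to the automorphism ``multiplication by $t$'' of $P[t,t^{-1}]$, and restriction of scalars along $\mathbb{Z}[\mathbb{Z}/p][t^{\pm k}]\subset\mathbb{Z}[\mathbb{Z}/p][t^{\pm 1}]$ turns this into a $k\times k$ companion-type matrix whose $K_1$-class is $[P]$ again, now relative to $t^k$; geometrically, the $k$-fold cyclic cover of $Y\times S^1$ in the circle direction is $Y\times S^1$ with the same $Y$, recording the same finiteness obstruction.) Now $M_k=\partial_+W_k$, where $W_k$ is the $k$-fold cyclic cover of $W$; by the transfer formula for Whitehead torsion $\tau(W_k,F\times S^1)=\mathrm{tr}_k(\tau)=\tau$, so $W$ and $W_k$ are $h$-cobordisms from $F\times S^1$ with the same torsion and hence diffeomorphic rel $F\times S^1$ by the $s$-cobordism theorem; thus $M_k\cong M$ for all $k>1$. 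In particular $M_q\cong M$ for every $q\ge 1$, so~(3) reduces to the single assertion that $M$ is not a topological fiber bundle over $S^1$.

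For~(3), observe that $\mathrm{Hom}(\mathbb{Z}/p\times\mathbb{Z},\mathbb{Z})\cong\mathbb{Z}$, so any fibering of $M$ over $S^1$ is induced by the canonical epimorphism $\phi$, whose infinite cyclic cover is $M_\infty\simeq F$ and hence finitely dominated; thus $(M,\phi)$ is a band, and by Farrell's fibering theorem \cite{Farrell} (in the topological category --- for $n=5$ via Freedman--Quinn surgery, which applies because $\mathbb{Z}/p\times\mathbb{Z}$ is virtually cyclic hence good, and for $n\ge 6$ in all of CAT) $M$ is a topological fiber bundle over $S^1$ if and only if Farrell's fibering obstruction $c(M)\in\mathrm{Wh}(\mathbb{Z}/p\times\mathbb{Z})$ vanishes. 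I would then appeal to the formula for how $c$ changes along an $h$-cobordism: since $F\times S^1$ manifestly fibers, $c(M)=\Phi(\tau)$ with $\Phi=\mathrm{id}+(-1)^n\iota$ (up to the standard sign conventions), where $\iota$ is the duality involution on $\mathrm{Wh}(\mathbb{Z}/p\times\mathbb{Z})$. The involution $\iota$ preserves the $\widetilde{K}_0$-summand, and there --- because the group-ring involution $g\mapsto g^{-1}$ of $\mathbb{Z}[\mathbb{Z}/p]$ is complex conjugation on $\mathbb{Z}[\xi_p]$ --- it acts as complex conjugation $J$ on $\widetilde{K}_0(\mathbb{Z}[\mathbb{Z}/p])\cong\mathrm{Cl}(\mathbb{Z}[\xi_p])$; hence $c(M)=(1+(-1)^n J)\tau$. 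Since $J$ is trivial on the subgroup $\mathrm{Cl}^+$ of order $h_p^+$ and equals $-1$ (after inverting $2$) on the quotient $\mathrm{Cl}^-$ of order $h_p^-$, the hypothesis that both $h_p^+$ and $h_p^-$ have odd prime factors lets me choose, whatever the parity of $n$, an odd-order $\tau$ supported in the relevant $J$-eigenspace, so that $c(M)=(1+(-1)^n J)\tau=2\tau'\neq 0$. Therefore $M$ is not a topological fiber bundle over $S^1$, and neither is any $M_q$ by~(2).

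The main obstacle is Step~(3): one must make the $h$-cobordism formula for Farrell's fibering obstruction fully precise --- fixing the sign $(-1)^n$ and verifying that the Bass--Heller--Swan restriction of the duality involution to the $\widetilde{K}_0$-summand is complex conjugation on $\mathrm{Cl}(\mathbb{Z}[\xi_p])$ --- and then carry out the eigenspace bookkeeping carefully enough to see that an odd prime factor of $h_p^+$ (resp. $h_p^-$) survives multiplication by $2$ on the corresponding conjugation eigenspace and so produces a nonzero obstruction; having to cover both parities of $n$ is exactly what forces the hypothesis on both factors of the class number. A secondary technical point is the low-dimensional case $n=5$, where one needs the topological fibering theorem with Freedman--Quinn surgery for the good group $\mathbb{Z}/p\times\mathbb{Z}$.
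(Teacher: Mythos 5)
Your construction of $M$, and your proofs of~(1) and~(2), match the paper's: pick $x$ in the $\widetilde K_0(\mathbb Z[\mathbb Z/p])$ summand of $\mathrm{Wh}(\mathbb Z/p\times\mathbb Z)$ of odd prime order with the right behavior under the duality involution, realize it by an $h$-cobordism $(W;F\times S^1,M)$, and use Lemma~\ref{lem_transfer} (transfer acts as the identity on that summand) to get $M\cong M_k$. Your variant of~(2) --- invoking the uniqueness part of the $s$-cobordism theorem to identify $W$ with $W_k$ rel the base $F\times S^1$ --- is an equivalent but slightly more direct phrasing of the paper's gluing of $W$ to $W_k$ along $N$.

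Your step~(3), however, takes a genuinely different route. You invoke Farrell--Siebenmann fibering obstruction theory, posit an $h$-cobordism formula $c(M)=(\mathrm{id}+(-1)^n\iota)(\tau)$, and analyse its eigenspace decomposition. The paper avoids the fibering obstruction entirely: it computes the torsion of the canonical comparison map $\varphi=r\circ i\colon M\to N$ to be $\tau(\varphi)=\tau(W,M)-\tau(W,N)=(-1)^nx^*-x=-2x\ne 0$, lifts it to $\varphi_q\colon M_q\to N_q$ with the same torsion by Lemma~\ref{lem_transfer}, notes that if $M_q$ were a topological fiber bundle over $S^1$ with $\Pi_\sharp$ the projection (after a finite cover of $S^1$, a point you gloss over), then $\varphi_q$ would be homotopic to a \emph{fiber} homotopy equivalence $\psi_q$ over $S^1$, and then applies Anderson's Theorem~A (extended to TOP via Chapman) to conclude $\tau(\psi_q)=z\cdot\chi(S^1)=0$ --- a contradiction. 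This is cleaner: it never needs a precise fibering-obstruction formula, it avoids any Freedman--Quinn input at $n=5$, and it sidesteps the question of which group the obstruction actually lives in (it is the image in $\mathrm{Wh}(G\times\mathbb Z)/\mathrm{Wh}(G)$, not all of $\mathrm{Wh}(\pi_1M)$, cf.\ Remark~\ref{rmk_fiber_extension}).

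Two concrete issues in your sketch, both nonfatal given the hypothesis on $p$ but worth flagging. First, the sign on the involution is most likely wrong: Lemmas~\ref{lem_k1_conjugate} and equation~(\ref{eqn_k0_conjugate}) give that the Whitehead-group duality involution restricted to the $\widetilde K_0$-summand is complex conjugation $J$, and the paper's computation yields $\tau(\varphi)=\phi\bigl(((-1)^nJ-1)y\bigr)=-\phi\bigl((1-(-1)^nJ)y\bigr)$, so the operator is $1-(-1)^nJ$, not $1+(-1)^nJ$; accordingly (see Remark~\ref{rmk_class_dimension}) one uses $h_p^+$ when $n$ is \emph{odd} and $h_p^-$ when $n$ is \emph{even}, which is the opposite of what your formula would dictate. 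You hedged ``up to sign conventions'', and since the hypothesis demands both $h_p^\pm$ have odd prime factors, the conclusion survives --- but the eigenspace bookkeeping does need fixing. Second, ``any fibering is induced by the canonical epimorphism'' requires the standard preliminary step of passing to a finite cover of $S^1$ (and possibly reversing orientation) so that $\Pi_\sharp$ is exactly the projection; otherwise the fiber could be disconnected and $\Pi_\sharp$ could have image $m\mathbb Z$.

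Your secondary observation that~(3) reduces via~(2) to the single case $q=1$ is correct and is a small simplification over the paper's handling (which lifts $\varphi$ to $\varphi_q$ and applies Anderson's theorem for each $q$ directly).
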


Prime numbers satisfying the conditions in Theorem \ref{thm_no_finite} and \ref{thm_no_fiber} can be found by comparing the table in \cite[p.~352]{Washington} and the main table in \cite[p.~935]{Schoof}. An instance is $p=191$.

In the above examples, since $\pi_{1} (M) = \mathbb{Z}/p \times \mathbb{Z}$, an isomorphism $h: M \rightarrow M_{k}$ automatically satisfies the condition $h_{\sharp} (G \times 0) = G \times 0$ with $G = \mathbb{Z}/p$. Therefore they satisfy all the assumptions in Theorem \ref{thm_high_manifold} except that $G = \mathbb{Z}^{r}$. These examples also show that the answer to Question \ref{que_general} is negative in general.

\medskip

Now we focus on the special case of Question \ref{que_general} --- Question \ref{que_pi1_z}. As we have seen, by Theorem \ref{thm_high_manifold}, it  has an affirmative answer in dimension $>5$ and in dimension $5$ for the TOP category. We list the the answer to Question \ref{que_pi1_z} in low dimensions as follows.

In dimension $3$, the categories DIFF, PL and TOP are the same. It's well-known that a closed $3$-manifold $M$ with $\pi_{1} (M) = \mathbb{Z}$ is a fiber bundle over $S^{1}$ (without the self-covering assumption). In fact, by the solution of Poincar\'{e} Conjecture, such a manifold is prime \cite[p.\ 27]{Hempel}, and hence is homeomorphic to the product $S^1 \times S^2$ if it is orientable, and homeomorphic to the twisted product $S^{1} \widetilde{\times} S^{2}$ if it is non-orientable (\cite[Theorem~5.2]{Hempel}).

In dimension $4$, we have an analogous result for self-covering manifolds with fundamental group $\mathbb Z$.
\begin{bigthm}\label{thm_4_top}
Let $M$ be a connected closed topological $4$-manifold with $\pi_{1} (M) = \mathbb{Z}$. If $M$ is homotopy equivalent to $M_{k}$ for some $k>1$, then $M$ is homeomorphic to the product $S^{1} \times S^{3}$ if it is orientable, or homeomorphic to the twisted product $S^{1} \widetilde{\times} S^{3}$ if it is non-orientable. Here $S^{1} \widetilde{\times} S^{3}$ is the $S^{3}$-bundle over $S^{1}$ with monodromy $f(x_{1}, x_{2}, x_{3}, x_{4}) = (x_{1}, x_{2}, x_{3}, -x_{4})$.
\end{bigthm}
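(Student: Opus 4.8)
The plan is to bootstrap from the higher-dimensional results already established. The only genuinely new input needed for dimension $4$ is the surgery-theoretic classification of closed topological $4$-manifolds with fundamental group $\mathbb{Z}$, which is available by Freedman--Quinn theory since $\mathbb{Z}$ is a ``good'' (in particular, polycyclic) group. First I would invoke Theorem \ref{thm_cw_homotopy} with $G = 0$: the hypothesis $h_\sharp(G \times 0) = G \times 0$ is vacuous when $G$ is trivial, so from the homotopy equivalence $h \colon M \to M_k$ we conclude that the infinite cyclic cover $M_\infty$ is finitely dominated. In fact the argument for Theorem \ref{thm_cw_homotopy} should give more in this case --- by the Farrell--Siebenmann fibering obstruction, or rather by analyzing the periodicity built into the self-covering, the relevant $\widetilde{K}_0$ and Whitehead obstructions live in groups that vanish for $\pi = \mathbb{Z}$, so $M_\infty$ is homotopy equivalent to a finite CW complex and, being the infinite cyclic cover, it is homotopy equivalent to a closed aspherical-type model; here one checks $M_\infty$ is a homotopy $3$-sphere. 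Concretely: $H_*(M_\infty;\mathbb{Z})$ is the homology of $M_\infty$ with $\mathbb{Z}[t,t^{-1}]$-module structure, it is finitely generated over $\mathbb{Z}[t,t^{-1}]$, and the Milnor exact sequence relating $H_*(M)$, $H_*(M_\infty)$ forces $H_*(M_\infty;\mathbb{Z}) \cong H_*(S^3;\mathbb{Z})$; since $\pi_1(M_\infty) = 0$, $M_\infty$ is a homotopy $3$-sphere.

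Next I would promote this homotopy statement to a topological fibering statement. Since $M_\infty$ is homotopy equivalent to $S^3$ and is a noncompact topological $3$-manifold, by the Poincar\'e Conjecture in dimension $3$ together with the end structure coming from the periodicity (the two ends of $M_\infty$ are tame, each homotopy equivalent to $S^3$), $M_\infty$ is homeomorphic to $S^3 \times \mathbb{R}$. Then the deck transformation generating $\mathbb{Z}$ acts on $S^3 \times \mathbb{R}$ properly discontinuously and cocompactly, so $M$ is the mapping torus of a self-homeomorphism of $S^3$: that is, $M$ is a topological $S^3$-bundle over $S^1$. Such bundles are classified by $\pi_0(\mathrm{Homeo}(S^3))= \mathbb{Z}/2$ (by the $3$-dimensional Smale conjecture, $\mathrm{Homeo}(S^3) \simeq O(4)$, which has two components), so $M$ is homeomorphic either to $S^1 \times S^3$ (orientable case) or to $S^1 \widetilde\times S^3$ (non-orientable case), and the two cases are distinguished exactly by orientability of $M$. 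This identifies the twisted bundle with the stated monodromy $f(x_1,x_2,x_3,x_4) = (x_1,x_2,x_3,-x_4)$, since that reflection represents the nontrivial component of $O(4)$.

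The main obstacle I anticipate is the step asserting $M_\infty$ is homeomorphic (not merely homotopy equivalent) to $S^3 \times \mathbb{R}$: one must control the ends of the open $3$-manifold $M_\infty$. This is where the self-covering hypothesis, rather than bare finiteness, does real work --- the homotopy equivalence $h \colon M \to M_k$ refines to a statement that translation by $k$ on $M_\infty$ is homotopic to translation by $1$ composed with a homotopy equivalence of $M_\infty$ preserving the ends, which pins down the end behavior and lets one apply Siebenmann's thesis (tameness and the collaring of tame ends of open manifolds, valid in the topological category in dimension $\geq 5$ but here supplemented in dimension $3$ by geometrization/Poincar\'e for the compact pieces). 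An alternative route that avoids end-control is to bypass $M_\infty$ and apply Freedman--Quinn surgery directly: classify closed topological $4$-manifolds homotopy equivalent to $S^1 \times S^3$ or $S^1 \widetilde\times S^3$ and show the structure set is trivial (the surgery obstruction groups $L_n(\mathbb{Z}[\mathbb{Z}])$ and the relevant normal invariants vanish or are realized by the two bundles), then check that $M$, having the homology and fundamental group forced by the finiteness of $M_\infty$, is homotopy equivalent to one of these two model bundles. I would pursue whichever of these two arguments is cleaner; the surgery route is likely the safer one to write out in full, with the $M_\infty$-route as motivation.
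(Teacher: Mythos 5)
Your \emph{alternative route} is the one the paper takes, but your \emph{primary route} contains a basic error, and even the alternative route as written misses the key computation.

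The error in the primary route: you repeatedly call $M_\infty$ a noncompact $3$-manifold (``is a noncompact topological $3$-manifold,'' ``the open $3$-manifold $M_\infty$''). This is wrong. Since $\pi_1(M)=\mathbb{Z}$, the space $M_\infty$ is the universal cover of $M$, hence a noncompact topological $4$-manifold. Even if $M_\infty$ is homotopy equivalent to $S^3$, the Poincar\'e Conjecture in dimension $3$ does not apply to it; what you would instead need is to show $M_\infty$ has two tame ends and is the interior of a compact $4$-manifold homeomorphic to $S^3\times[0,1]$, which is a $3$-dimensional end-collaring problem for a $4$-manifold. This is precisely the kind of input unavailable here: the paper's fibering theorem (Theorem~\ref{thm_high_manifold}) requires $\dim M\geq 5$ exactly because the Cappell splitting / $s$-cobordism machinery does not handle this case in dimension $4$. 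The paper's proof of Theorem~\ref{thm_4_top} deliberately \emph{avoids} fibering $M$ directly, and so should you.

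On the alternative (surgery-classification) route, which is essentially what the paper does: you still have a gap. You write ``check that $M$, having the homology and fundamental group forced by the finiteness of $M_\infty$, is homotopy equivalent to one of these two model bundles,'' but you never pin down what that homology is. The paper's crux is the explicit computation $H_2(M_\infty;\mathbb{Z})=0$, obtained by combining two ingredients: (a) for a closed $4$-manifold with $\pi_1=\mathbb{Z}$, $H_2$ of the universal cover is a \emph{free} $\mathbb{Z}[\mathbb{Z}]$-module (cited to Kreck and Freedman--Quinn); and (b) by Theorem~\ref{thm_cw}, $H_2(M_\infty;\mathbb{Z})$ is a finitely generated abelian group. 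Since a nonzero free $\mathbb{Z}[t,t^{-1}]$-module is never finitely generated over $\mathbb{Z}$, one concludes $H_2(M_\infty;\mathbb{Z})=0$, and the Wang sequence then gives $H_2(M;\mathbb{Z})=0$. With that established, the paper does not compute structure sets or show $M$ is homotopy equivalent to a model first; it invokes the classification theorems directly --- Freedman--Quinn~[Thm.~10.7A(2)] in the orientable case and Wang's results in the non-orientable case --- to conclude $M$ is homeomorphic to $S^1\times S^3$ or $S^1\widetilde{\times}S^3$. Your plan is in the right neighborhood, but the free-module observation (a) and the contrast with the $\mathbb{Z}$-finite-generation coming from Theorem~\ref{thm_cw} is the idea you are missing; without it the classification theorems cannot be applied.
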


By Theorem \ref{thm_4_top}, a smooth self-covering $4$-manifold with fundamental group $\mathbb Z$ is homeomorphic to $S^{1} \times S^{3}$ or $S^{1} \widetilde{\times} S^{3}$. If it is a smooth fiber bundle over $S^{1}$, the fiber must be $S^3$ by the solution of Poincar\'e Conjecture. The space of orientation-preserving diffeomorphisms of $S^3$ is path-connected (Cerf's Theorem \cite[p.~3, Theorem 1]{Cerf1}, see also \cite{Hatcher}), therefore a smooth $S^3$-bundle over $S^1$ is diffeomorphic to $S^{1} \times S^{3}$ or $S^{1} \widetilde{\times} S^{3}$. Hence in this case, a positive answer to Question \ref{que_pi1_z} is equivalent to the uniqueness of smooth structures of $S^1 \times S^3$ and $S^{1} \widetilde{\times} S^{3}$.

In dimension $5$ the answer to Question \ref{que_pi1_z} is negative in the smooth category.

\begin{bigthm}\label{thm_5_diff}
There is a connected closed smooth $5$-manifold $M$ with $\pi_{1} (M) = \mathbb{Z}$ satisfying the following properties. The manifold $M$ is diffeomorphic to $M_{k}$ for some $k>1$; however, $M_{q}$ is not a smooth (piecewise linear) fiber bundle over $S^{1}$ for all $q \geq 1$.
\end{bigthm}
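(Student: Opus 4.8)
The plan is to construct the desired $5$-manifold $M$ as a mapping torus of a diffeomorphism of a simply connected $4$-manifold and to detect the failure of fibering via an obstruction that survives passage to finite covers. First I would take a closed simply connected smooth $4$-manifold $N$ with a self-diffeomorphism $\phi$ whose mapping torus $M = N \times [0,1]/(x,0)\sim(\phi(x),1)$ is smoothly $h$-cobordant (equivalently, after crossing with a high-dimensional handle, topologically indistinguishable in the relevant sense) to a manifold whose monodromy has a finite power smoothly isotopic to the identity, while $\phi$ itself has infinite order up to smooth isotopy in a way that obstructs \emph{any} $M_q$ from being a smooth bundle over $S^1$. Since $\pi_1(M)=\mathbb Z$ is free abelian, Theorem~\ref{thm_high_manifold} does not apply in dimension $5$ in the DIFF category (the hypothesis there restricts the $5$-dimensional case to TOP with $G=0$), so there is no contradiction; conversely Corollary~\ref{cor_5_top} shows that topologically $M$ \emph{is} a bundle, so the obstruction I need must be a purely smooth one, invisible to TOP.

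The concrete mechanism I would use exploits the difference between the topological and smooth mapping class groups of a $4$-manifold and the group $\Theta_5=0$ together with nontrivial $\Theta$-type phenomena fed in from dimension $4$. Specifically, choose $N$ so that there is a self-diffeomorphism $\phi$ which is topologically isotopic to the identity but not smoothly isotopic to any finite-order diffeomorphism, with the additional property that for \emph{every} $q$, $\phi^q$ is still not smoothly isotopic to the identity; then $M_q$, being the mapping torus of $\phi^q$, cannot be a smooth bundle over $S^1$ in a way compatible with the given cohomology class, and a short Farrell-type argument (the smooth fibering theorem of \cite{Farrell}) shows it cannot be any smooth bundle over $S^1$ at all, because the infinite cyclic cover is homotopy equivalent to $N$ and any smooth fibering would have to have fiber homotopy equivalent to $N$, forcing the monodromy to be smoothly isotopic to a power-root of $\phi$, which does not exist. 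Meanwhile, to get the self-covering structure $M\cong M_k$, I would arrange $\phi^k$ to be smoothly \emph{pseudo-isotopic} (not isotopic) to $\phi$ composed with a suitable diffeomorphism $g$, using that pseudo-isotopy is weaker than isotopy in dimension $4$ and that the relevant pseudo-isotopy space can be made nontrivial; concretely, building $\phi$ out of a Gluck-twist-like or corks-based construction on $N$ with controlled behavior under iteration.

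The key steps in order: (1) select the $4$-manifold $N$ and diffeomorphism $\phi$ with the exotic isotopy behavior — this is where I would invoke known exotic phenomena for diffeomorphisms of simply connected $4$-manifolds (e.g.\ diffeomorphisms topologically isotopic but not smoothly isotopic to the identity, iterates of which remain exotic); (2) form $M$ and its covers $M_q$, verify $\pi_1(M)=\mathbb Z$ and that $M_\infty\simeq N$ is a finite complex; (3) produce the self-covering isomorphism $M\cong M_k$ by finding $g$ with $\phi^k g$ smoothly pseudo-isotopic to $g\phi$ (or $g\phi^{-1}$) and applying the mapping-torus gluing construction from the introduction; (4) rule out smooth fiberings of all $M_q$ by combining Farrell's smooth fibering theorem (the obstruction vanishes, so if $M_q$ fibered the fiber would be a closed smooth $4$-manifold homotopy equivalent to $N$, hence homeomorphic to $N$, and the monodromy a self-homeomorphism; then a smooth-structure / isotopy argument forces $\phi^q$ smoothly isotopic to something it is not) to derive a contradiction; (5) confirm the TOP bundle structure is consistent via Corollary~\ref{cor_5_top}, so the example is genuinely smooth.

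The main obstacle is step (1) together with the iteration control in step (4): I need a self-diffeomorphism $\phi$ of a simply connected $4$-manifold all of whose positive powers fail to be smoothly isotopic to the identity — a statement about the order of an element in the smooth mapping class group $\pi_0\mathrm{Diff}(N)$ that is topologically trivial — and simultaneously I need $\phi^k$ to become smoothly pseudo-isotopic to a conjugate-type modification of $\phi$ so that the self-covering exists. Reconciling "infinite order up to smooth isotopy" with "tractable under pseudo-isotopy after raising to the $k$-th power" is delicate; I expect to resolve it by working with a specific family of exotic diffeomorphisms (for instance, ones arising from the Akbulut-cork/Dehn-twist-along-a-$3$-sphere constructions, or from Ruberman's examples detected by Donaldson/Seiberg-Witten-theoretic invariants) where the relevant invariant is additive under iteration (so all powers are nonzero) yet the pseudo-isotopy class stabilizes. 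A secondary technical point is verifying, in step (4), that Farrell's smooth fibering obstruction really does vanish here (it lies in $\mathrm{Wh}(\pi_1)\cong\mathrm{Wh}(\mathbb Z)=0$ and the relevant $\widetilde K_0$ term also vanishes since $M_\infty$ is already a finite complex), so that the \emph{only} obstacle to fibering is the nonexistence of a suitable smooth monodromy — which is exactly the exotic phenomenon we built in.
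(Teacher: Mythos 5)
Your proposal has a fatal flaw at its very starting point, and as a result it takes a route that cannot possibly reach the conclusion. You define $M$ as the mapping torus of a self-\emph{diffeomorphism} $\phi$ of a smooth $4$-manifold $N$. But the mapping torus of a smooth diffeomorphism is, tautologically, a smooth fiber bundle over $S^1$ with fiber $N$ and monodromy $\phi$; likewise $M_q$ is the mapping torus of $\phi^q$ and is automatically a smooth bundle over $S^1$. Whether $\phi^q$ is smoothly isotopic to the identity, or to a finite-order diffeomorphism, is irrelevant: there is no requirement that a smooth bundle over $S^1$ have trivial or finite-order monodromy. So the non-fibering conclusion in your step (4) is false for the manifold you build, and no amount of exotic-diffeomorphism machinery can repair it --- the statement you want to prove is literally negated by your own construction.

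The paper does something genuinely different, which sidesteps this issue by arranging for the would-be fiber to be \emph{non-smoothable}. It takes Freedman's closed $1$-connected topological $4$-manifold $V$ with intersection form $E_8\oplus E_8$ (which by Donaldson's theorem admits no smooth structure, since a positive definite form of a smooth $4$-manifold must be diagonal), observes that $\mathrm{ks}(V)=0$ so that the Kirby--Siebenmann obstruction for $N=V\times S^1$ vanishes and $N$ is smoothable, and then, using that a closed $5$-manifold carries only finitely many smooth structures, finds $u_1<u_2$ with $N_{2^{u_1}}$ diffeomorphic to $N_{2^{u_2}}$; this gives the self-covering $M=N_{2^{u_1}}\cong M_k$. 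Finally, if some $M_q$ fibered smoothly over $S^1$, the (connected) fiber would be a smooth closed $1$-connected $4$-manifold homotopy equivalent to $V$ and thus with intersection form $E_8\oplus E_8$, contradicting Donaldson. The crucial point, which your proposal inverts, is that the smooth structure on $M$ does \emph{not} arise from a smooth structure on the fiber; the smooth structure exists on the total space for Kirby--Siebenmann reasons while the fiber remains unsmoothable. If you wanted to salvage an approach along your lines, you would have to give up the mapping-torus model entirely and produce $M$ by a method that does not a priori present it as a smooth bundle --- which is exactly what the paper does.
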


We summarize the answer to Question \ref{que_pi1_z} in the table

\medskip

\begin{center}
\begin{tabular}{c|c|c|c|c}
& $\dim \le 3$ & $\dim=4$ & $\dim=5$ & $\dim \ge 6$ \\
\hline
DIFF & yes & unknown yet& no & yes \\
PL & yes & unknown yet& no & yes \\
TOP & yes & yes & yes & yes
\end{tabular}
\end{center}

\medskip

To show the finiteness properties of the infinite cyclic cover, we are led to questions in commutative algebra. In fact, we need Theorem \ref{thm_noether} as the algebraic foundation of this paper, which may be of independent interest. This theorem helps us to reduce a problem of modules to a problem of vector spaces.

Before stating the theorem, we set the notations and recall some basic facts from commutative algebra. See Section \ref{sec_algebra} for more details. Let $R$ be a Noetherian commutative ring and $S$ be a commutative $R$-algebra with generators $t_{1}, \dots, t_{n}$. Here by an $R$-algebra, we mean $S$ is equipped with a ring homomorphism $\iota: R \rightarrow S$. In particular, $S$ is also a Noetherian commutative ring, and any $S$-module is naturally an $R$-module. Let $\mathrm{Spec} (R)$ be the prime spectrum of $R$, i.e., the set of all prime ideals of $R$. For each $P \in \mathrm{Spec} (R)$, let $\kappa (P)$ be the residue field of $R$ at $P$, i.e., the quotient field of $R/P$.

Let $\mathfrak{L}$ be an $S$-module and $P \in \mathrm{Spec} (R)$, we consider the following property for a pair $(\mathfrak{L},P)$.

\begin{property}\label{pt_eigenvalue}
As a $\kappa (P)$-vector space, $\kappa (P) \otimes_{R} \mathfrak{L}$ is finite-dimensional and the eigenvalues of the induced action of $t_{i}$ on $\kappa (P) \otimes_{R} \mathfrak{L}$ are integral over $R/P$ for every $1 \leq i \leq n$.
\end{property}

Let $\mathfrak{M}$ be a nonzero finitely generated  $S$-module. By definition, a prime ideal $P \in \mathrm{Spec} (R)$ is an associated prime ideal of $\mathfrak{M}$ if $P= \mathrm{Ann}_{R} (x)$ for some $x \in \mathfrak{M}$, i.e., $P$ is the annihilator ideal of $x$. Let $\mathrm{Ass}_{R} (\mathfrak{M})$ be the set of associated prime ideals of $\mathfrak{M}$, then $\mathrm{Ass}_{R} (\mathfrak{M})$ is a nonempty finite subset of $\mathrm{Spec} (R)$ (cf. Section \ref{sec_algebra}).

\begin{bigthm}\label{thm_noether}
Let $R$, $S$ and $\mathfrak{M}$ be defined as the above. The following statements are equivalent.
\begin{enumerate}
\item The pair $(\mathfrak{M}, P)$ satisfies Property \ref{pt_eigenvalue} for any $P \in \mathrm{Spec}(R)$.
\item The pair $(\mathfrak{M}, P)$ satisfies Property \ref{pt_eigenvalue} for any $P \in \mathrm{Ass}_{R} (\mathfrak{M})$.
\item As an $R$-module, $\mathfrak{M}$ is finitely generated.
\end{enumerate}
\end{bigthm}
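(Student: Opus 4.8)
The strategy is to prove the implications $(3)\Rightarrow(1)\Rightarrow(2)\Rightarrow(3)$, with essentially all of the content in the last one. The implication $(1)\Rightarrow(2)$ is immediate since $\mathrm{Ass}_{R}(\mathfrak{M})\subseteq\mathrm{Spec}(R)$. For $(3)\Rightarrow(1)$: if $\mathfrak{M}$ is generated over $R$ by $m$ elements, then $\kappa(P)\otimes_{R}\mathfrak{M}$ is spanned over $\kappa(P)$ by $m$ vectors and hence is finite-dimensional; moreover each $t_{i}$ is an $R$-linear endomorphism of the finitely generated $R$-module $\mathfrak{M}$ (because $S$ is commutative, so $\iota(R)$ is central), so by the Cayley--Hamilton/determinant trick $t_{i}$ satisfies a monic polynomial $p_{i}(T)\in R[T]$ with $p_{i}(t_{i})\mathfrak{M}=0$; reducing $p_{i}$ modulo $P$ exhibits every eigenvalue of $t_{i}$ on $\kappa(P)\otimes_{R}\mathfrak{M}$ as a root of a monic polynomial over $R/P$, i.e.\ as integral over $R/P$.

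For $(2)\Rightarrow(3)$ I would first reduce to a statement about a single prime of $S$. Let $\mathfrak{q}_{1},\dots,\mathfrak{q}_{t}$ be the finitely many minimal primes of $\mathrm{Ann}_{S}(\mathfrak{M})$ in $\mathrm{Spec}(S)$. Each $\mathfrak{q}_{j}$ is an associated prime of $\mathfrak{M}$ over $S$, say $\mathfrak{q}_{j}=\mathrm{Ann}_{S}(x_{j})$; then $P_{j}:=\mathfrak{q}_{j}\cap R=\mathrm{Ann}_{R}(x_{j})$ is prime, hence lies in $\mathrm{Ass}_{R}(\mathfrak{M})$, so $(\mathfrak{M},P_{j})$ satisfies Property~\ref{pt_eigenvalue} by hypothesis~(2). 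The crucial assertion is the \emph{Claim}: for each such $\mathfrak{q}=\mathfrak{q}_{j}$ the quotient $S/\mathfrak{q}$ is finitely generated as an $R$-module (via $\iota$). Granting the Claim, $(3)$ follows by routine Noetherian bookkeeping: $S/\mathrm{rad}(\mathrm{Ann}_{S}(\mathfrak{M}))$ embeds into $\bigoplus_{j}S/\mathfrak{q}_{j}$ and hence is finitely generated over $R$ (as $R$ is Noetherian); since $S$ is Noetherian, $\mathrm{rad}(\mathrm{Ann}_{S}(\mathfrak{M}))^{N}\subseteq\mathrm{Ann}_{S}(\mathfrak{M})$ for some $N$, so $S/\mathrm{Ann}_{S}(\mathfrak{M})$ is an iterated extension of finitely generated $S/\mathrm{rad}(\mathrm{Ann}_{S}(\mathfrak{M}))$-modules and is therefore also finitely generated over $R$; finally $\mathfrak{M}$ is a finitely generated module over $S/\mathrm{Ann}_{S}(\mathfrak{M})$, hence finitely generated over $R$.

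To prove the Claim, fix $\mathfrak{q}=\mathfrak{q}_{j}$, put $P:=P_{j}$, and pass to the fiber over $P$. The ring $\bar S:=S\otimes_{R}\kappa(P)$ is the localization $(R\setminus P)^{-1}\bigl(S/\iota(P)S\bigr)$; since $\mathfrak{q}\supseteq\iota(P)S$ and $\mathfrak{q}\cap R=P$, the prime $\mathfrak{q}$ determines a prime $\tilde{\mathfrak{q}}\in\mathrm{Spec}(\bar S)$ whose contraction along $S\to\bar S$ is $\mathfrak{q}$, so $S/\mathfrak{q}\hookrightarrow\bar S/\tilde{\mathfrak{q}}=:L$. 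Now let $W:=\kappa(P)\otimes_{R}\mathfrak{M}$, regarded as an $\bar S$-module ($W=\mathfrak{M}\otimes_{S}\bar S$); it is finite-dimensional over $\kappa(P)$ by Property~\ref{pt_eigenvalue}, so $\bar S/\mathrm{Ann}_{\bar S}(W)$ is a finite-dimensional, hence Artinian, $\kappa(P)$-algebra, and $\mathrm{Supp}_{\bar S}(W)$ is a finite set of maximal ideals of $\bar S$, each an associated prime of $W$. Since $\mathrm{Supp}_{\bar S}(W)$ is the preimage of $\mathrm{Supp}_{S}(\mathfrak{M})$ under $\mathrm{Spec}(\bar S)\to\mathrm{Spec}(S)$ and $\mathfrak{q}\in\mathrm{Supp}_{S}(\mathfrak{M})$, we get $\tilde{\mathfrak{q}}\in\mathrm{Supp}_{\bar S}(W)=\mathrm{Ass}_{\bar S}(W)$. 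Hence $\tilde{\mathfrak{q}}$ is maximal, $L$ is a finite extension field of $\kappa(P)$, and there is an $\bar S$-linear embedding $L\hookrightarrow W$. On this copy of $L$ inside $W$, the image of $t_{i}$ in $\bar S$ acts by multiplication by $\bar t_{i}\in L$ (the image of $t_{i}$), so the minimal polynomial over $\kappa(P)$ of the $t_{i}$-action on $W$ annihilates $1\in L$; therefore $\bar t_{i}$ is a root of that polynomial, hence an eigenvalue of $t_{i}$ on $W$, and by Property~\ref{pt_eigenvalue} it is integral over $R/P$. As $S$ is generated over $\iota(R)$ by $t_{1},\dots,t_{n}$, the image of $\iota(R)$ in $S/\mathfrak{q}$ is $R/P$, so $S/\mathfrak{q}=(R/P)[\bar t_{1},\dots,\bar t_{n}]\subseteq L$ is generated over $R/P$ by finitely many integral elements; hence it is finitely generated as an $R/P$-module, a fortiori as an $R$-module, proving the Claim.

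The step I expect to be the main obstacle is the Claim, and inside it the geometric identification: realizing $S/\mathfrak{q}$ as a subring of the residue field $L$ at a closed point of the fiber $\mathrm{Spec}(S\otimes_{R}\kappa(P))$ that lies in the support of $\kappa(P)\otimes_{R}\mathfrak{M}$, and then recognizing the image $\bar t_{i}$ of each generator $t_{i}$ in $L$ as an eigenvalue of $t_{i}$ acting on $\kappa(P)\otimes_{R}\mathfrak{M}$. This is precisely the point at which the eigenvalue-integrality clause of Property~\ref{pt_eigenvalue} is converted into integrality of the $\bar t_{i}$ over $R/P$, which is what forces $S/\mathfrak{q}$ to be module-finite over $R$. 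Everything else --- minimal primes over an annihilator are associated, the nilpotence passage from the radical of the annihilator down to the annihilator, $\mathrm{Supp}$ of a base change being the preimage of $\mathrm{Supp}$, finite-length modules over the Artinian fiber ring --- consists of standard facts about Noetherian rings and should require only routine verification.
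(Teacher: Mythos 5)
Your proof is correct and establishes the hard implication $(2)\Rightarrow(3)$ by a genuinely different route from the paper. The paper begins with a primary decomposition $0=\bigcap_{i}\mathfrak{M}_{i}$ of the zero submodule of $\mathfrak{M}$, reduces to modules $\mathfrak{M}/\mathfrak{M}_{i}$ with a single $S$-associated prime $Q_{i}$, transfers Property~\ref{pt_eigenvalue} from such a module down to its submodule $S/Q_{i}$ via a filtration by powers of $Q_{i}$ (Lemma~\ref{lem_sub_eigenvalue}), and then uses that the domain $S/Q_{i}$ is torsion-free over $R/P_{i}$, hence injects into $\kappa(P_{i})\otimes_{R}(S/Q_{i})$, to see that each generator is killed by a monic polynomial over $R/P_{i}$ (Lemma~\ref{lem_prime}); the pieces are reassembled by the diagonal embedding $\mathfrak{M}\hookrightarrow\bigoplus_{i}\mathfrak{M}/\mathfrak{M}_{i}$. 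You instead work at the level of $\mathrm{Ann}_{S}(\mathfrak{M})$: take its minimal primes $\mathfrak{q}_{j}$ (which are associated primes, so their contractions lie in $\mathrm{Ass}_{R}(\mathfrak{M})$), prove directly that $S/\mathfrak{q}_{j}$ is module-finite over $R$, pass from $S/\sqrt{\mathrm{Ann}_{S}(\mathfrak{M})}$ to $S/\mathrm{Ann}_{S}(\mathfrak{M})$ by the same radical/nilpotence device as the paper's Lemma~\ref{lem_radical}, and finish using that $\mathfrak{M}$ is module-finite over $S/\mathrm{Ann}_{S}(\mathfrak{M})$. The real divergence is in your Claim: rather than the filtration and torsion-freeness argument, you pass to the fiber $\bar{S}=S\otimes_{R}\kappa(P)$, note that the fiber prime $\tilde{\mathfrak{q}}$ lies in $\mathrm{Supp}_{\bar{S}}(W)=\mathrm{Ass}_{\bar{S}}(W)$ for the finite-length module $W=\kappa(P)\otimes_{R}\mathfrak{M}$, embed the residue field $L=\bar{S}/\tilde{\mathfrak{q}}$ into $W$ via an associated-prime witness, and read off the images $\bar{t}_{i}\in L$ as genuine eigenvalues of $t_{i}$ on $W$, hence integral over $R/P$ by Property~\ref{pt_eigenvalue}. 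This is arguably cleaner: it produces an explicit eigenvector realizing each $\bar{t}_{i}$ and bypasses Lemma~\ref{lem_sub_eigenvalue} entirely, at the cost of the short detour through supports and associated primes of the fiber module. The other two implications are handled the same way as in the paper ($(1)\Rightarrow(2)$ trivial, $(3)\Rightarrow(1)$ by Cayley--Hamilton).
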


\medskip

\noindent \textbf{Outline.} In Section \ref{sec_algebra}, we prove Theorem \ref{thm_noether} which is the algebraic foundation of this paper. In Section \ref{sec_cw_homology}, the homological finiteness result Theorem \ref{thm_cw} is proved. With this preparation Theorem \ref{thm_cw_homotopy} is proved in Section \ref{sec_cw_homotopy}. Section \ref{sec_fiber} includes the proof of the fibering results of self-covering manifolds such as Theorems \ref{thm_high_manifold} and \ref{thm_4_top} and Corollaries  \ref{cor_period}, \ref{cor_5_top} and \ref{cor_monodromy}. On the other hand, the non-fibering Theorems \ref{thm_no_finite}, \ref{thm_no_fiber} and \ref{thm_5_diff} are proved in Section \ref{sec_non-fiber}. Finally, we propose some questions in Section \ref{sec_question}.

\medskip

\noindent \textbf{Acknowledgement.} We are indebted to Francis Thomas Farrell for his generous support over the years. He developed an obstruction theory of fibering manifolds over a circle in his PhD thesis \cite{Farrell} over fifty years ago, which has deeply influenced the present work. This paper is dedicated to him on the occasion of his 80th birthday.

We thank Chuangxun Cheng, John R. Klein, Linquan Ma, Xiaolei Wu, Li Yu and Ruixiang Zhang for various discussions. We also thank Simon K. Donaldson and Alex Waldron who informed us that Question \ref{que_4_diff} is still open. We express our gratitude to an anonymous referee who meticulously read the paper and made many helpful suggestions which led to an improved presentation. The first author is partially supported by NSFC11871272, the second author is partially supported by NSFC12071462, the third author is partially supported by a Sloan fellowship.

\section{Algebraic Foundation}\label{sec_algebra}
In this section, we prove Theorem \ref{thm_noether} which is the algebraic foundation of this paper. The proof uses extensively results in commutative algebra.

By our assumption, both $R$ and $S$ are Noetherian commutative rings, and $S$ is an $R$-algebra via the ring homomorphism $\iota: R \rightarrow S$ and $S$ is generated by $t_{1}, \dots, t_{n}$ as an $R$-algebra. Since $\mathfrak{M}$ is a nonzero finitely generated $S$-module, the set $\mathrm{Ass}_{S} (\mathfrak{M})$ is nonempty and finite (cf. \cite[Theorems~6.1~\&~6.5]{Matsumura2}). By \cite[(9.A)]{Matsumura1} (or \cite[Exercise~6.7]{Matsumura2}),
\[
\mathrm{Ass}_{R} (\mathfrak{M}) = \{ \iota^{-1} (Q) \mid Q \in \mathrm{Ass}_{S} (\mathfrak{M}) \}.
\]
Thus $\mathrm{Ass}_{R} (\mathfrak{M})$ is also nonempty and finite.

The difficult part of Theorem \ref{thm_noether} is the implication $(2) \Rightarrow (3)$. To prove it, we shall reduce $\mathfrak{M}$ to be a module with one generator over $S$. So we first discuss some quotient rings of $S$.

\begin{lemma}\label{lem_prime}
Suppose $Q \in \mathrm{Spec} (S)$ and $P= \iota^{-1} (Q)$. If $(S/Q, P)$ satisfies Property~\ref{pt_eigenvalue}, then $S/Q$ is a finitely generated $R$-module.
\end{lemma}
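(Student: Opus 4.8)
The plan is to reduce to the case where $R$ is a field by localizing, and then to exploit the hypothesis that the eigenvalues of the $t_i$ are integral to conclude finiteness. First I would observe that $S/Q$ is an integral domain (since $Q$ is prime) and that the ring homomorphism $R/P \to S/Q$ induced by $\iota$ is injective, because $P = \iota^{-1}(Q)$. Thus we have an extension of integral domains $R/P \hookrightarrow S/Q$, with $S/Q$ generated over $R/P$ by the images $\bar t_1, \dots, \bar t_n$ of the generators. Replacing $R$ by $R/P$, $S$ by $S/Q$, and $\mathfrak L$ by $S/Q$ itself, the problem becomes: given a domain extension $R \hookrightarrow S$ finitely generated as an $R$-algebra, such that $\kappa(P) \otimes_R S = \mathrm{Frac}(R) \otimes_R S$ (here $P = (0)$ is the zero ideal of the domain $R$, so $\kappa(P)$ is the fraction field $K$ of $R$) is a finite-dimensional $K$-vector space with each $t_i$ acting with eigenvalues integral over $R$, show $S$ is a finitely generated $R$-module.

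Next I would analyze $S_K := K \otimes_R S$. Since it is a finite-dimensional $K$-algebra that is also a domain (a localization of the domain $S$), it is a field, finite over $K$; call it $L$. For each $i$, the element $t_i \in L$ satisfies its characteristic polynomial over $K$ acting by multiplication, and by hypothesis the eigenvalues of this multiplication operator — which are the Galois conjugates of $t_i$ in $\bar K$ — are integral over $R$. Since the coefficients of the characteristic (or minimal) polynomial of $t_i$ are, up to sign, elementary symmetric functions of these conjugates, each $t_i$ satisfies a monic polynomial equation with coefficients in $R$ (the ring being integrally closed is not needed here: symmetric functions of elements integral over $R$ are again integral over $R$, and they already lie in $K$, but a priori not in $R$ — so here I would need to be slightly careful, see below). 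Granting that $t_i$ is integral over $R$ for each $i$, the $R$-subalgebra $R[t_1, \dots, t_n] = S$ is generated over $R$ by finitely many integral elements, hence is a finitely generated $R$-module, which is the conclusion.

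The main obstacle, and the step I would spend the most care on, is the passage from ``eigenvalues of $t_i$ on $K \otimes_R S$ are integral over $R$'' to ``$t_i$ is integral over $R$'': the coefficients of the characteristic polynomial lie in $K$ and are integral over $R$, but to conclude they lie in $R$ one wants $R$ integrally closed, which is not assumed. The resolution I anticipate is to not insist on a monic polynomial over $R$ directly, but instead to argue that $S$ is integral over $R$ by showing every element of $S$ is integral over $R$ (using that $S \subseteq K \otimes_R S$ and every element of the latter is a sum of products of the $t_i$ with $K$-coefficients that are integral over $R$), and then invoke that an $R$-algebra of finite type that is integral over $R$ is module-finite over $R$. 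Concretely: each element of $S$ is a polynomial in the $t_i$ over $R \subseteq K$; since each $t_i$ is integral over $R$ and integral elements form a ring, each element of $S$ is integral over $R$; combined with $S$ being a finitely generated $R$-algebra, $S$ is a finitely generated $R$-module. An alternative, cleaner route is to use the finite-dimensionality of $K \otimes_R S$ over $K$ together with generic freeness or the theory of constructible sets to spread out to a finite $R$-module on a dense open, but I would expect the integrality argument above to be the most direct.
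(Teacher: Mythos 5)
Your overall strategy matches the paper's: reduce to the domain extension $R/P \hookrightarrow S/Q$, use that $S/Q$ embeds in $\kappa(P)\otimes_R(S/Q)$ (torsion-freeness over $R/P$, which is what the localization argument gives), show the generators $\bar t_i$ are integral over $R/P$, and conclude module-finiteness from \cite[Corollary~5.2]{Atiyah_Macdonald}. The one place where your writeup has a real gap is exactly the point you flagged and then did not actually resolve: the passage from ``eigenvalues of $t_i$ are integral over $R/P$'' to ``$\bar t_i$ is integral over $R/P$.'' After correctly noting that the coefficients of the characteristic polynomial are only integral over $R/P$, not \emph{in} $R/P$, your ``Concretely:'' paragraph simply asserts ``since each $t_i$ is integral over $R$,'' which is precisely what was to be shown; the intermediate sentence about elements of $K\otimes_R S$ being ``sums of products of the $t_i$ with $K$-coefficients that are integral over $R$'' is not a true statement and does not supply the missing step.

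The gap is small and closes in either of two standard ways, both available inside your own framework. (i) Observe that $\bar t_i$ is itself a root of the characteristic polynomial of multiplication-by-$t_i$ on $L=\kappa(P)\otimes_R(S/Q)$ (Cayley--Hamilton applied to the element $1\in L$ gives $\chi(\bar t_i)=0$), i.e.\ $\bar t_i$ is itself one of the ``eigenvalues'' in Property~\ref{pt_eigenvalue}, hence is integral over $R/P$ by the hypothesis directly, no symmetric functions needed. (ii) Alternatively, invoke transitivity of integrality: $\bar t_i$ satisfies a monic polynomial whose coefficients lie in the subring $R'$ of $\kappa(P)$ generated over $R/P$ by those coefficients; since the coefficients are integral over $R/P$, the ring $R'$ is module-finite over $R/P$, so $\bar t_i$ is integral over $R/P$. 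The paper sidesteps the whole issue by a third route: rather than use the characteristic polynomial (whose coefficients live in $\kappa(P)$), it takes $f_i$ to be a suitable power of the product of the monic polynomials over $R/P$ satisfied by the eigenvalues, so $f_i\in(R/P)[u]$ is monic by construction and kills $t_i$ on the vector space, and then the injectivity $S/Q\hookrightarrow\kappa(P)\otimes_R(S/Q)$ gives $f_i(\bar t_i)=0$ in $S/Q$. All three routes work; your exposition should just commit to one of the first two to plug the hole.
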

\begin{proof}
Since $\kappa (P) \otimes_{R} (S/Q)$ is finite-dimensional over $\kappa (P)$ and the eigenvalues of $t_{i}$ on it are integral over $R/P$, there is a monic polynomial $f_{i} (u) \in (R/P) [u]$ such that $f_{i} (t_{i}) =0$ on this vector space. Thus we have the following commutative diagram.
\[
\xymatrix{
S/Q \ar[d] \ar[r]^{\times f_{i}(t_{i})} & S/Q \ar[d] \\
\kappa (P) \otimes_{R} (S/Q) \ar[r]^{0} & \kappa (P) \otimes_{R} (S/Q)
}
\]
Here the vertical maps are the map $x \mapsto 1 \otimes x$. Since $Q$ is prime, $S/Q$ is a domain and hence a torsion free module over $R/P$. Notice that
\[
\kappa (P) \otimes_{R} (S/Q) = \kappa (P) \otimes_{R/P} (S/Q).
\]
Thus these vertical maps are injective, which implies that the action of $f_{i} (t_{i})$ on $S/Q$ is $0$. Let $\bar{t}_{i}$ be the image of $t_{i}$ in $S/Q$. We see $f_{i} (\bar{t}_{i}) =0$ in $S/Q$. Thus $\bar{t}_{i}$ is integral over $R/P$. Since $S/Q$ is an $R/P$-algebra with generators $\bar{t}_{i}$, $1 \leq i \leq n$, we infer $S/Q$ is a finitely generated $R/P$-module (\cite[Corollary~5.2]{Atiyah_Macdonald}), which implies the conclusion.
\end{proof}

\begin{remark}
Actually, it's unnecessary to assume the Noetherianity of $R$ and $S$ in Lemma \ref{lem_prime}.
\end{remark}

Let $I$ be an ideal of $S$. Recall that the radical of $I$ is another ideal of $S$, which is defined as
\[
\sqrt{I} = \{ s \in S \mid \text{$s^{n} \in I$ for some $n>0$}\}.
\]
\begin{lemma}\label{lem_radical}
Let $I$ be an ideal of $S$. If $S/ \sqrt{I}$ is a finitely generated $R$-module, then so is $S/I$.
\end{lemma}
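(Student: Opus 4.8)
The statement to prove is Lemma~\ref{lem_radical}: if $S/\sqrt{I}$ is a finitely generated $R$-module, then so is $S/I$. The plan is to exploit Noetherianity of $S$ together with the fact that the radical $\sqrt{I}$ is nilpotent modulo $I$, i.e.\ there exists $N>0$ with $(\sqrt{I})^{N} \subseteq I$. Granting this, one gets a finite filtration
\[
S/I \supseteq \sqrt{I}/I \supseteq (\sqrt{I})^{2}/I \supseteq \cdots \supseteq (\sqrt{I})^{N}/I = 0
\]
of $S/I$ by $S$-submodules, and it suffices to show that each successive quotient $(\sqrt{I})^{j}/(\sqrt{I})^{j+1}$ is a finitely generated $R$-module, since finite generation over $R$ is closed under extensions and passing to the top of a filtration of finite length. (Here I should remark that $\sqrt{I}/I$ itself is the bottom piece and $S/\sqrt{I}$ is the top piece of the filtration, so $S/I$ is built from $S/\sqrt{I}$ and the pieces $(\sqrt{I})^{j}/(\sqrt{I})^{j+1}$, $0 \le j \le N-1$.)

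First I would recall why $(\sqrt{I})^{N}\subseteq I$ for some $N$: since $S$ is Noetherian, $\sqrt{I}$ is finitely generated, say by $s_{1},\dots,s_{m}$, and each $s_{k}$ satisfies $s_{k}^{n_{k}}\in I$; taking $N$ large enough (e.g.\ $N = 1 + \sum_{k}(n_{k}-1)$) forces every product of $N$ generators, hence all of $(\sqrt{I})^{N}$, into $I$. Next, each $(\sqrt{I})^{j}/(\sqrt{I})^{j+1}$ is a finitely generated module over $S/\sqrt{I}$: it is annihilated by $\sqrt{I}$ by construction, and it is a finitely generated $S$-module because $S$ is Noetherian and $(\sqrt{I})^{j}$ is an ideal of the Noetherian ring $S$. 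Since $S/\sqrt{I}$ is a finitely generated $R$-module by hypothesis, any finitely generated $S/\sqrt{I}$-module is a finitely generated $R$-module — this is the standard tower fact that if $B$ is a finite $A$-algebra (here $A = R$, $B = S/\sqrt{I}$) then every finite $B$-module is a finite $A$-module.

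Finally I would assemble the induction on the length of the filtration: from the short exact sequences of $R$-modules
\[
0 \longrightarrow (\sqrt{I})^{j+1}/I \longrightarrow (\sqrt{I})^{j}/I \longrightarrow (\sqrt{I})^{j}/(\sqrt{I})^{j+1} \longrightarrow 0
\]
(for $j = 0$ reading $(\sqrt{I})^{0} = S$), finite generation of the sub and the quotient gives finite generation of the middle term; starting from $(\sqrt{I})^{N}/I = 0$ and climbing down to $j=0$ yields that $S/I$ is finitely generated over $R$.

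**Main obstacle.** There is no serious obstacle here — the only point requiring care is the bookkeeping that ties $S/\sqrt{I}$ into the filtration correctly (it is the top quotient $(\sqrt{I})^{0}/(\sqrt{I})^{1} = S/\sqrt{I}$, so one genuinely does use the hypothesis, not just the pieces between $\sqrt{I}$ and $I$) and making sure each graded piece is handled as a module over $S/\sqrt{I}$ rather than over $S$, so that the "finite algebra $\Rightarrow$ finite module" step applies. The Noetherian hypothesis on $S$ is used twice: to get nilpotence of $\sqrt{I}/I$ and to guarantee each ideal power is a finitely generated $S$-module.
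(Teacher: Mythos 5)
Your proof is correct and follows essentially the same approach as the paper: both arguments use Noetherianity of $S$ to get $(\sqrt{I})^{N}\subseteq I$, observe that each graded piece $(\sqrt{I})^{j}/(\sqrt{I})^{j+1}$ is a finitely generated $S/\sqrt{I}$-module and hence a finitely generated $R$-module, and then climb the finite filtration via short exact sequences. The only cosmetic difference is that the paper reduces to showing $\sqrt{I}/I$ is finitely generated and filters that, whereas you filter $S/I$ directly; the content is identical.
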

\begin{proof}
Let $J$ denote $\sqrt{I}$. It suffices to prove that $J/I$ is a finitely generated $R$-module. Since $S$ is Noetherian, $J^{k}$ is a finitely generated $S$-module for all $k > 0$. Hence $J^{k} / J^{k+1}$ is a finitely generated $S/J$-module. As $S/J$ is a finitely generated $R$-module, we infer that $J^{k} / J^{k+1}$ is as well. By the Noetherianity of $S$ again, $J^{k} \subseteq I$ when $k$ is large enough. Therefore, $J/I$ is the quotient of an iterated extension of finitely many finitely generated $R$-modules $J^{k} / J^{k+1}$, and hence $J/I$ is a finitely generated $R$-module.
\end{proof}

\begin{lemma}\label{lem_sub_eigenvalue}
Suppose $\mathfrak{L}$ is an $S$-module and $Q \in \mathrm{Spec} (S)$ such that $Q \subseteq \sqrt{\mathrm{Ann}_{S} (\mathfrak{L})}$. Let $P= \iota^{-1} (Q)$. Suppose further $(\mathfrak{L}, P)$ satisfies Property \ref{pt_eigenvalue}. Then $(\mathfrak{N}, P)$ satisfies Property \ref{pt_eigenvalue} for every $S$-submodule $\mathfrak{N}$ of $\mathfrak{L}$.
\end{lemma}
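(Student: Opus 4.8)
The plan is to work around the fact that $\kappa(P)\otimes_R(-)$ is only right exact, so that Property~\ref{pt_eigenvalue} does not visibly descend to submodules. First I would record two trivial inheritances: if $(\mathfrak{L},P)$ satisfies Property~\ref{pt_eigenvalue}, then so does $(\mathfrak{L}',P)$ for every quotient $\mathfrak{L}'$ of $\mathfrak{L}$ and for every finite direct sum $\mathfrak{L}^{\oplus s}$, since in both cases the characteristic polynomial of the $t_i$-action on $\kappa(P)\otimes_R(-)$ divides a power of the one for $\mathfrak{L}$, so finite-dimensionality and integrality of the eigenvalues persist.

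The key case is the one in which $Q$ itself annihilates $\mathfrak{L}$. Then, since $\iota(P)\subseteq Q$, the module $\mathfrak{L}$ — and hence every $S$-submodule $\mathfrak{N}$ — is actually an $R/P$-module, so $\kappa(P)\otimes_R\mathfrak{N}=\kappa(P)\otimes_{R/P}\mathfrak{N}$ is simply the localization of $\mathfrak{N}$ at the multiplicative set $(R/P)\setminus\{0\}$. Localization is exact, so $\kappa(P)\otimes_R\mathfrak{N}\hookrightarrow\kappa(P)\otimes_R\mathfrak{L}$ is an inclusion of $S$-modules into a finite-dimensional $\kappa(P)$-vector space; being $t_i$-equivariant, it shows $\kappa(P)\otimes_R\mathfrak{N}$ is finite-dimensional and that the characteristic polynomial of $t_i$ on it divides the one on $\kappa(P)\otimes_R\mathfrak{L}$, so its roots are integral over $R/P$. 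This settles the lemma in this special case.

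For the general case I would invoke the hypothesis $Q\subseteq\sqrt{\mathrm{Ann}_S(\mathfrak{L})}$ together with the Noetherianity of $S$: then $Q$ is finitely generated, so $Q^m\mathfrak{L}=0$ for some $m$, and we have the finite filtration $\mathfrak{L}\supseteq Q\mathfrak{L}\supseteq\cdots\supseteq Q^m\mathfrak{L}=0$. Given a submodule $\mathfrak{N}$, put $\mathfrak{N}_j=\mathfrak{N}\cap Q^j\mathfrak{L}$; then $\mathfrak{N}_j/\mathfrak{N}_{j+1}$ embeds $S$-linearly into $Q^j\mathfrak{L}/Q^{j+1}\mathfrak{L}$. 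The latter is annihilated by $Q$, and since $Q^j$ is a finitely generated ideal it is a quotient of a finite direct sum of copies of $\mathfrak{L}$; hence by the first paragraph it satisfies Property~\ref{pt_eigenvalue}, and then by the key case so does its submodule $\mathfrak{N}_j/\mathfrak{N}_{j+1}$.

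It remains to reassemble $\mathfrak{N}=\mathfrak{N}_0$ from these graded pieces. Applying $\kappa(P)\otimes_R(-)$ to $0\to\mathfrak{N}_{j+1}\to\mathfrak{N}_j\to\mathfrak{N}_j/\mathfrak{N}_{j+1}\to 0$ yields, by right exactness, a short exact sequence $0\to V\to\kappa(P)\otimes_R\mathfrak{N}_j\to\kappa(P)\otimes_R(\mathfrak{N}_j/\mathfrak{N}_{j+1})\to 0$ of $\kappa(P)[t_i]$-modules, where $V$ is a $t_i$-stable quotient of $\kappa(P)\otimes_R\mathfrak{N}_{j+1}$. Hence $\kappa(P)\otimes_R\mathfrak{N}_j$ is finite-dimensional and the characteristic polynomial of each $t_i$ on it is the product of those on $V$ and on $\kappa(P)\otimes_R(\mathfrak{N}_j/\mathfrak{N}_{j+1})$, each of which has roots integral over $R/P$; so $(\mathfrak{N}_j,P)$ satisfies Property~\ref{pt_eigenvalue}, and descending induction from $j=m$ (where $\mathfrak{N}_m=0$) to $j=0$ finishes the proof. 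The only genuine obstacle is the one already named: right-exactness forces the detour through the $Q$-adic filtration, whose pieces are killed by $P$ and for which the tensor functor coincides with the exact localization functor; the radical hypothesis on $Q$ is exactly what makes this filtration finite.
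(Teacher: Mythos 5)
Your proof is correct and follows essentially the same route as the paper's: the same $Q$-adic filtration with $\mathfrak{N}_j = \mathfrak{N}\cap Q^j\mathfrak{L}$, the same reduction of $\mathfrak{N}_j/\mathfrak{N}_{j+1}$ to the case of modules killed by $Q$ (hence $R/P$-modules, where $\kappa(P)\otimes_R(-)$ is exact), and the same descending induction reassembly via right exactness. The only cosmetic differences are that you realize $Q^j\mathfrak{L}/Q^{j+1}\mathfrak{L}$ directly as a quotient of $\mathfrak{L}^{\oplus r}$ using generators of $Q^j$ rather than the paper's step-by-step induction using generators of $Q$, and you phrase the exactness step as localization rather than flatness of $\kappa(P)$ over $R/P$.
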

\begin{proof}
Firstly, we show that $(Q^{k} \mathfrak{L}/ Q^{k+1} \mathfrak{L}, P)$ satisfies Property \ref{pt_eigenvalue} for each $k \geq 0$.

By the right exactness of the tensor product, the epimorphism $\mathfrak{L} \rightarrow \mathfrak{L}/Q \mathfrak{L}$ yields an epimorphism
\[
\kappa (P) \otimes_{R} \mathfrak{L} \rightarrow \kappa (P) \otimes_{R} (\mathfrak{L} /Q \mathfrak{L}).
\]
Since $(\mathfrak{L}, P)$ satisfies Property \ref{pt_eigenvalue}, we proved the claim in the case of $k=0$. Here we use the fact that the set of eigenvalues of $t_{i}$ on $\kappa (P) \otimes_{R} (\mathfrak{L}/Q \mathfrak{L})$ is a subset of that of $t_{i}$ on $\kappa (P) \otimes_{R} \mathfrak{L}$.

Since $S$ is Noetherian, $Q$ is a finitely generated $S$-module with some generators $q_{1}, \dots, q_{m}$. We obtain an epimorphism
\begin{eqnarray*}
\bigoplus_{i=1}^{m} Q^{k} \mathfrak{L}/ Q^{k+1} \mathfrak{L} & \rightarrow & Q^{k+1} \mathfrak{L}/ Q^{k+2} \mathfrak{L}, \\
(x_{1}, \dots, x_{m}) & \mapsto & \sum_{i=1}^{m} q_{i} x_{i}.
\end{eqnarray*}
Our first claim is proved by an induction on $k$.

Secondly, define $\mathfrak{N}_{k} = \mathfrak{N} \cap Q^{k} \mathfrak{L}$ for all $k \geq 0$. Here $\mathfrak{N}_{0} = \mathfrak{N}$. Since $S$ is Noetherian and $Q \subseteq \sqrt{\mathrm{Ann}_{S} (\mathfrak{L})}$, when $k$ is large enough, we have $Q^{k} \subseteq \mathrm{Ann}_{S} (\mathfrak{L})$, and hence $Q^{k} \mathfrak{L} =0$ and $\mathfrak{N}_{k} =0$ . Clearly, we have the following exact sequence of $S$-modules
\[
0 \rightarrow \mathfrak{N}_{k} / \mathfrak{N}_{k+1} \rightarrow Q^{k} \mathfrak{L}/ Q^{k+1} \mathfrak{L}.
\]
Note that this is actually an exact sequence of $S/Q$-modules and hence  $R/P$-modules. Since $\kappa (P)$ is flat over $R/P$, we have an exact sequence
\[
0 \rightarrow \kappa (P) \otimes_{R/P} (\mathfrak{N}_{k} / \mathfrak{N}_{k+1}) \rightarrow \kappa (P) \otimes_{R/P} (Q^{k} \mathfrak{L}/ Q^{k+1} \mathfrak{L}),
\]
which is actually the the same as
\[
0 \rightarrow \kappa (P) \otimes_{R} (\mathfrak{N}_{k} / \mathfrak{N}_{k+1}) \rightarrow \kappa (P) \otimes_{R} (Q^{k} \mathfrak{L}/ Q^{k+1} \mathfrak{L}).
\]
We infer $(\mathfrak{N}_{k} / \mathfrak{N}_{k+1}, P)$ satisfies Property \ref{pt_eigenvalue}, because so does $(Q^{k} \mathfrak{L}/ Q^{k+1} \mathfrak{L}, P)$.

By the right exactness of the tensor product again, the exact sequence
\[
\mathfrak{N}_{k+1} \rightarrow \mathfrak{N}_{k} \rightarrow \mathfrak{N}_{k} / \mathfrak{N}_{k+1} \rightarrow 0
\]
induces an exact sequence
\[
\kappa (P) \otimes_{R} \mathfrak{N}_{k+1} \rightarrow \kappa (P) \otimes_{R} \mathfrak{N}_{k} \rightarrow \kappa (P) \otimes_{R} (\mathfrak{N}_{k} / \mathfrak{N}_{k+1}) \rightarrow 0.
\]
Since $\mathfrak{N}_{k} =0$ when $k$ is large enough, by a decreasing induction on $k$, we see $(\mathfrak{N},P)$ satisfies Property \ref{pt_eigenvalue}.
\end{proof}

\begin{proposition}\label{prop_associated}
Suppose $\mathfrak{L}$ is a finitely generated $S$-module such that $\mathrm{Ass}_{S} (\mathfrak{L}) = \{ Q \}$. Let $P= \iota^{-1} (Q)$. Suppose further $(\mathfrak{L},P)$ satisfies Property \ref{pt_eigenvalue}. Then $\mathfrak{L}$ is a finitely generated $R$-module.
\end{proposition}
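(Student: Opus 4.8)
The plan is to reduce the statement to Lemmas~\ref{lem_prime} and \ref{lem_sub_eigenvalue} together with a filtration of $\mathfrak{L}$ by powers of $Q$. The starting observation is that, since $\mathrm{Ass}_S(\mathfrak{L}) = \{Q\}$, the minimal primes over $\mathrm{Ann}_S(\mathfrak{L})$ — which for the finitely generated module $\mathfrak{L}$ are always associated primes — reduce to $Q$ alone; hence $\mathrm{Supp}_S(\mathfrak{L}) = V(Q)$ and $\sqrt{\mathrm{Ann}_S(\mathfrak{L})} = Q$. In particular $Q \subseteq \sqrt{\mathrm{Ann}_S(\mathfrak{L})}$, so Lemma~\ref{lem_sub_eigenvalue} is available for $\mathfrak{L}$ and all of its $S$-submodules.

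First I would produce a copy of $S/Q$ inside $\mathfrak{L}$: because $Q \in \mathrm{Ass}_S(\mathfrak{L})$, there is $x \in \mathfrak{L}$ with $\mathrm{Ann}_S(x) = Q$, so $Sx \cong S/Q$ is an $S$-submodule of $\mathfrak{L}$. By Lemma~\ref{lem_sub_eigenvalue}, the pair $(Sx, P)$ — equivalently $(S/Q, P)$ — satisfies Property~\ref{pt_eigenvalue}. Since $P = \iota^{-1}(Q)$ by hypothesis, Lemma~\ref{lem_prime} applies and shows that $S/Q$ is a finitely generated $R$-module.

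Next I would exploit the $Q$-adic filtration. Since $\sqrt{\mathrm{Ann}_S(\mathfrak{L})} = Q$ and $S$ is Noetherian, some power $Q^N$ is contained in $\mathrm{Ann}_S(\mathfrak{L})$, so $Q^N\mathfrak{L} = 0$ and we obtain a finite filtration $\mathfrak{L} = Q^0\mathfrak{L} \supseteq Q^1\mathfrak{L} \supseteq \cdots \supseteq Q^N\mathfrak{L} = 0$. Each subquotient $Q^j\mathfrak{L}/Q^{j+1}\mathfrak{L}$ is a finitely generated $S$-module annihilated by $Q$, hence a finitely generated module over $S/Q$; and since $S/Q$ is finitely generated over $R$ by the previous step, each subquotient is finitely generated over $R$. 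As an extension of finitely generated $R$-modules is finitely generated over $R$, a descending induction along the filtration shows that $\mathfrak{L}$ is finitely generated over $R$, as desired.

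The point to get right is the choice of filtration. A naive attempt would take a prime filtration $0 = \mathfrak{L}_0 \subset \cdots \subset \mathfrak{L}_m = \mathfrak{L}$ with $\mathfrak{L}_i/\mathfrak{L}_{i-1} \cong S/P_i$ and apply Lemma~\ref{lem_prime} to each quotient; but Lemma~\ref{lem_prime} requires the contracted prime $\iota^{-1}(P_i)$ to coincide with $P$, which generally fails for the embedded primes $P_i \supsetneq Q$ (and indeed $\kappa(P) \otimes_R (S/P_i)$ can vanish without $S/P_i$ being finite over $R$). Filtering by powers of $Q$ sidesteps this: it invokes Lemma~\ref{lem_prime} only once, for the single ``good'' prime $Q$, and then transfers finiteness over $S/Q$ directly to the subquotients. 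The remaining verifications — that $\sqrt{\mathrm{Ann}_S(\mathfrak{L})} = Q$, that $Q^N\mathfrak{L} = 0$, and that finite generation passes through the filtration — are routine.
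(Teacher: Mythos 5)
Your proof is correct. It shares the paper's key first step—producing a copy of $S/Q$ inside $\mathfrak{L}$ via an associated-prime witness, then combining Lemma~\ref{lem_sub_eigenvalue} with Lemma~\ref{lem_prime} to get that $S/Q$ is module-finite over $R$—but it diverges afterward in a way worth noting. The paper writes $\mathfrak{L} = \sum_i S x_i$, observes that $\mathrm{Ass}_S(S/I_i) = \{Q\}$ forces $\sqrt{I_i} = Q$ for each cyclic piece $S/I_i$, and then invokes Lemma~\ref{lem_radical} to pass from $S/Q$ to $S/I_i$. You instead apply the $Q$-adic filtration $\mathfrak{L} \supseteq Q\mathfrak{L} \supseteq \cdots \supseteq Q^N\mathfrak{L} = 0$ directly to $\mathfrak{L}$, using the same underlying idea (each subquotient is finitely generated over $S/Q$, hence over $R$, and extensions preserve finiteness) without first reducing to cyclic modules. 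Your route is more economical: it renders Lemma~\ref{lem_radical} unnecessary for this proposition, since you essentially inline its proof at the level of the whole module rather than routing through each $S/I_i$. The paper's version trades a slightly longer proof for isolating the radical-to-ideal step as a standalone lemma; the two are otherwise equivalent in content, and your remark about why a naive prime filtration of $\mathfrak{L}$ would fail (embedded primes $P_i \supsetneq Q$ need not contract to $P$, so Lemma~\ref{lem_prime} would not apply) correctly identifies the pitfall that both approaches are designed to sidestep.
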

\begin{proof}
Firstly, we claim that $S/Q$ is a finitely generated $R$-module. By definition, there exists $x \in \mathfrak{L}$ such that $\mathrm{Ann}_{S}(x) = Q$. Thus, we have a monomorphism of $S$-modules:
\begin{eqnarray*}
S/Q & \rightarrow & \mathfrak{L}, \\
s+Q & \mapsto & sx.
\end{eqnarray*}
Furthermore, since $\mathfrak{L}$ is a finitely generated $S$-module and $\mathrm{Ass}_{S} (\mathfrak{L}) = \{ Q \}$, we see $\sqrt{\mathrm{Ann}_{S} (\mathfrak{L})} = Q$ (\cite[Theorem~6.6]{Matsumura2}). As $(\mathfrak{L}, P)$ satisfies Property \ref{pt_eigenvalue}, by Lemma \ref{lem_sub_eigenvalue}, we further infer $(S/Q, P)$ satisfies Property \ref{pt_eigenvalue}. Now our claim follows from Lemma~\ref{lem_prime}.

By assumption, $\mathfrak{L}$ is a finitely generated nonzero $S$-module. In other words, $\mathfrak{L} = \sum_{i=1}^{m} S x_{i}$ for some nonzero $x_{i} \in \mathfrak{L}$. To finish the proof, it suffices to show $S/ I_{i} \cong S x_{i}$ is a finitely generated $R$-module for each $i$, where $I_{i} = \mathrm{Ann}_{S} (x_{i})$. Since $S x_{i} \subseteq \mathfrak{L}$, by the definition of associated prime ideals, we have
\[
\mathrm{Ass}_{S} (S/ I_{i}) = \mathrm{Ass}_{S} (S x_{i}) \subseteq \mathrm{Ass}_{S} (\mathfrak{L}) = \{ Q \}.
\]
As $S/ I_{i} \neq 0$, the set $\mathrm{Ass}_{S} (S/ I_{i})$ is nonempty (\cite[Theorem~6.1]{Matsumura2}). Thus $\mathrm{Ass}_{S} (S/ I_{i}) = \{ Q \}$, which further implies $\sqrt{I_{i}} = Q$ (\cite[Theorem~6.6]{Matsumura2}). As we already know $S/Q$ is a finitely generated $R$-module, by Lemma \ref{lem_radical}, $S/ I_{i}$ is also a finitely generated $R$-module, which finishes the proof.
\end{proof}

We are ready to prove Theorem \ref{thm_noether}.
\begin{proof}[Proof of Theorem \ref{thm_noether}]
$(1) \Rightarrow (2)$. This is trivial since $\mathrm{Ass}_{R} (\mathfrak{M}) \subseteq \mathrm{Spec} (R)$.

\medskip

$(2) \Rightarrow (3)$. Since $S$ is Noetherian and $\mathfrak{M}$ is a finitely generated nonzero $S$-module, by the primary decomposition of the zero submodule (\cite[Theorem~6.8]{Matsumura2}), there exist $S$-submodules $\mathfrak{M}_{i}$ of $\mathfrak{M}$, $1 \leq i \leq k$, such that
\begin{equation}\label{thm_noether_1}
0 = \bigcap_{i=1}^{k} \mathfrak{M}_{i},
\end{equation}
and $\mathrm{Ass}_{S} (\mathfrak{M}/ \mathfrak{M}_{i}) = \{ Q_{i} \}$ for some $Q_{i} \in \mathrm{Ass}_{S} (\mathfrak{M})$.

By \cite[(9.A)]{Matsumura1} (or \cite[Exercise~6.7]{Matsumura2}), for each $Q_{i}$, we have $P_{i} = \iota^{-1} (Q_{i}) \in \mathrm{Ass}_{R} (\mathfrak{M})$. Thus, by assumption, $(\mathfrak{M}, P_{i})$ satisfies Property \ref{pt_eigenvalue}. Since the tensor product is right exact, the projection $\mathfrak{M} \rightarrow \mathfrak{M}/ \mathfrak{M}_{i}$ induces an epimorphism
\[
\kappa (P_{i}) \otimes_{R} \mathfrak{M} \rightarrow \kappa (P_{i}) \otimes_{R} (\mathfrak{M}/ \mathfrak{M}_{i}).
\]
Thus $(\mathfrak{M}/ \mathfrak{M}_{i}, P_{i})$ satisfies Property \ref{pt_eigenvalue}. By Proposition \ref{prop_associated}, we have $\mathfrak{M}/ \mathfrak{M}_{i}$ is a finitely generated $R$-module.

By (\ref{thm_noether_1}), the diagonal homomorphism
\[
\mathfrak{M} \rightarrow \bigoplus_{j=1}^{k} \mathfrak{M}/ \mathfrak{M}_{i}
\]
is injective. Since each $\mathfrak{M}/ \mathfrak{M}_{i}$ is a finitely generated $R$-module and $R$ is Noetherian, $\mathfrak{M}$ is also a finitely generated $R$-module.

\medskip

$(3) \Rightarrow (1)$. Let $P \in \mathrm{Spec} (R)$. Since $\mathfrak{M}$ is a finitely generated $R$-module, as a $\kappa (P)$-vector space, $\kappa (P) \otimes_{R} \mathfrak{M}$ is finite dimensional. Furthermore, by (\cite[Proposition~2.4]{Atiyah_Macdonald}), there is a monic polynomial $f_{i} (u) \in R [u]$ such that the action of $f_{i} (t_{i})$ on $\mathfrak{M}$ is zero. Thus, the action of $f_{i} (t_{i})$ on $\kappa (P) \otimes_{R} \mathfrak{M}$ is also zero. In particular, the eigenvalues of the induced $t_{i}$-action on $\kappa (P) \otimes_{R} \mathfrak{M}$ are integral over $R/P$.
\end{proof}

\begin{remark}
Actually, the implication $(3) \Rightarrow (1)$ in Theorem \ref{thm_noether} does not need the assumption that $R$ and $S$ are Noetherian.
\end{remark}

To prove Theorem \ref{thm_cw}, we also need the following Proposition \ref{prop_eigenvalue} on eigenvalues.

\begin{proposition}\label{prop_eigenvalue}
Let $R$ be a Noetherian domain. Let $K$ be the quotient field of $R$. Let $\sigma$ be an automorphism of $R$. By abusing notations, we also use $\sigma$ to denote the induced automorphism of $K$. Suppose $A$ and $B$ are square matrices with entries in $K$, and $B$ is invertible. Suppose further there exist an integer $n>1$ and a unit $u \in R$ such that
\begin{equation}\label{prop_eigenvalue_1}
\sigma \left( A \right) = u B A^{n} B^{-1}.
\end{equation}
Then, for each nonzero eigenvalue $\lambda$ of $A$, both $\lambda$ and $\lambda^{-1}$ are integral over $R$.
\end{proposition}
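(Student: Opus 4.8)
The plan is to pass to an algebraic closure $\overline{K}$ of $K$, rephrase \eqref{prop_eigenvalue_1} as a statement about the multiset of eigenvalues, then isolate a single nonzero eigenvalue $\lambda$ and work inside the finite extension $L=K(\lambda)$. Inside $L$ the hypothesis becomes a clean functional equation $\tau(\lambda)=w\lambda^{N}$ with $\tau$ a field automorphism, $w\in R^{\times}$ and $N\geq 2$, and this equation forces every discrete valuation of $L$ that is nonnegative on $R$ to vanish on $\lambda$; combined with the description of the integral closure as an intersection of such valuation rings, this gives that $\lambda$ and $\lambda^{-1}$ are integral over $R$.

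In detail, first fix an automorphism $\hat\sigma$ of $\overline{K}$ extending $\sigma$. Applying $\hat\sigma$ coefficientwise to the characteristic polynomial of $A$, and using that neither conjugation nor the passage $A\mapsto A^{n}$ (after triangularizing over $\overline{K}$) changes characteristic polynomials, \eqref{prop_eigenvalue_1} shows that the eigenvalues of $\sigma(A)=uBA^{n}B^{-1}$, counted with multiplicity, are on one hand $\{\hat\sigma(\lambda)\}$ and on the other hand $\{u\lambda^{n}\}$, where $\lambda$ runs over the eigenvalues of $A$; hence these two multisets coincide. Since $u$ is a unit and $\hat\sigma$ is injective, this restricts to the nonzero eigenvalues $\lambda_{1},\dots,\lambda_{s}$, yielding a permutation $\pi$ of $\{1,\dots,s\}$ with $\hat\sigma(\lambda_{i})=u\lambda_{\pi(i)}^{n}$. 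Iterating this relation $m$ times, where $m$ is the order of $\pi$, and using $\pi^{m}=\mathrm{id}$, one obtains $\hat\sigma^{m}(\lambda_{i})=w\,\lambda_{i}^{n^{m}}$ for every $i$, where $w=\prod_{t=0}^{m-1}\sigma^{t}(u)^{n^{m-1-t}}\in R^{\times}$ is independent of $i$. Put $\tau:=\hat\sigma^{m}$ and $N:=n^{m}\geq 2$, so $\tau(\lambda)=w\lambda^{N}$ for every nonzero eigenvalue $\lambda$.

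Now fix a nonzero eigenvalue $\lambda$ and set $L=K(\lambda)$, a finite extension of $K$. Since $\tau(K)=K$ and $\tau(\lambda)=w\lambda^{N}\in R[\lambda]\subseteq L$, we get $\tau(L)\subseteq L$, and a dimension count over $K$ shows that $\tau$ restricts to an automorphism of $L$ that carries $R$ onto $R$. Let $R'$ be the integral closure of $R$ in $L$; by the Mori--Nagata theorem $R'$ is a Krull domain, hence $R'=\bigcap_{\mathfrak p}R'_{\mathfrak p}$ over its height-one primes $\mathfrak p$, each $R'_{\mathfrak p}$ a discrete valuation ring of $L$ containing $R$. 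It therefore suffices to prove that $v(\lambda)=0$ for every such valuation $v$, for then $\lambda,\lambda^{-1}\in R'$ and both are integral over $R$. Fix such a $v$. Because $w\in R^{\times}\subseteq\mathcal O_{v}^{\times}$, the identity $\tau(\lambda)=w\lambda^{N}$ gives $v(\tau(\lambda))=N\,v(\lambda)$; applying $\tau^{-1}$ to the identity, the element $\mu:=\tau^{-1}(\lambda)\in L$ satisfies $\tau(\mu)=w'\mu^{N}$ with $w'=\tau^{-1}(w)\in R^{\times}$, so $v(\mu)=v(\lambda)/N$, and inductively $v(\tau^{-j}(\lambda))=v(\lambda)/N^{j}$ for all $j\geq 0$. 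As each of these is an integer and $N\geq 2$, we conclude $v(\lambda)=0$.

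The one genuine difficulty is that $\sigma$ is not compatible with the valuations of $K$: it need not fix a chosen valuation ring, and a priori it has infinite order. Both reductions above are designed to deal with exactly this. Replacing $\hat\sigma$ by the power $\tau=\hat\sigma^{m}$ dictated by the finite permutation $\pi$ converts the ``$\sigma$-twist composed with a permutation of the $\lambda_i$'' into the honest functional equation $\tau(\lambda)=w\lambda^{N}$; and passing from $K$ to $L=K(\lambda)$ turns $\tau$ into an automorphism of a field all of whose relevant valuations are discrete, so that divisibility of $v(\lambda)$ by every power of $N$ forces $v(\lambda)=0$. The remaining ingredients --- existence of $\hat\sigma$, the behaviour of eigenvalues under $A\mapsto A^{n}$, and the fact that the integral closure of a Noetherian domain in a finite extension of its fraction field is a Krull domain --- are standard, so I do not expect them to cause trouble.
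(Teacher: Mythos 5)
Your proof is correct and follows essentially the same route as the paper's: iterate the functional equation \eqref{prop_eigenvalue_1}, pass to the integral closure of $R$ in a finite algebraic extension of $K$ and realize it (via Mori--Nagata) as a Krull domain cut out by discrete valuations, and conclude $v(\lambda)=0$ for each such $v$ since $v(\lambda)$ is divisible by $n^{m}$ for all $m$. The only organizational difference is that you first descend to the level of eigenvalues, extract a permutation $\pi$ of the nonzero ones, and raise $\hat\sigma$ to the power $m=\mathrm{ord}(\pi)$ to obtain the single clean identity $\tau(\lambda)=w\lambda^{N}$ inside $L=K(\lambda)$, which you then iterate by applying $\tau^{-1}$; the paper instead iterates the matrix identity directly to get $A=u_{m}B_{m}\,\sigma^{-m}(A^{n^{m}})\,B_{m}^{-1}$ for each $m$, reads off $\lambda=u_{m}x_{m}^{n^{m}}$, and works throughout in the splitting field of $\mathrm{char}(A)$ (which is automatically $\sigma$-stable, so no permutation argument is needed). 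One phrasing slip: passing from $A$ to $A^{n}$ certainly \emph{does} change the characteristic polynomial; what you mean, and use, is that it sends each eigenvalue $\lambda_{i}$ to $\lambda_{i}^{n}$, so that the eigenvalues of $uBA^{n}B^{-1}$ are exactly $\{u\lambda_{i}^{n}\}$. This does not affect the argument.
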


Let $L$ be the splitting field of the characteristic polynomial of $A$. Then $L/K$ is a finite algebraic extension.

\begin{lemma}
$\sigma$ extends to be an automorphism of $L$.
\end{lemma}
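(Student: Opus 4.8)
The statement to prove is that the automorphism $\sigma$ of $K$ extends to an automorphism of $L$, where $L$ is the splitting field of the characteristic polynomial $p_A(u) \in K[u]$ of $A$. The plan is to use the defining relation (\ref{prop_eigenvalue_1}), $\sigma(A) = u B A^n B^{-1}$, to see that $\sigma$ maps $p_A$ to the characteristic polynomial of $\sigma(A)$, which by similarity-invariance of the characteristic polynomial equals the characteristic polynomial of $u A^n$. The key point is that $L$ is also a splitting field over $K$ of the polynomial $\sigma(p_A)$: indeed, $\sigma(p_A)$ is (up to a unit) the characteristic polynomial of $u A^n$, whose roots are $u \lambda_j^n$ as $\lambda_j$ ranges over the roots of $p_A$, and these all lie in $L$ since $u \in R \subseteq K \subseteq L$ and $L$ is closed under products. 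Conversely, $L$ is generated over $K$ by the $\lambda_j$, and each $\lambda_j^n$ is a root of $\sigma(p_A)$, hence lies in any splitting field of $\sigma(p_A)$ inside a fixed algebraic closure; a short argument (or appeal to the Galois-theoretic uniqueness of splitting fields) then shows $L$ is generated over $K$ by the roots of $\sigma(p_A)$ as well.

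More carefully, I would first extend $\sigma$ to an automorphism $\tilde\sigma$ of a fixed algebraic closure $\bar K$ of $K$; this is the standard fact that an isomorphism of fields extends to an isomorphism of their algebraic closures. Then I would argue that $\tilde\sigma(L) = L$. Since $L = K(\lambda_1, \dots, \lambda_s)$ with $\lambda_i$ the roots of $p_A$, we have $\tilde\sigma(L) = K(\tilde\sigma(\lambda_1), \dots, \tilde\sigma(\lambda_s))$, and each $\tilde\sigma(\lambda_i)$ is a root of $\sigma(p_A)$. As noted above, $\sigma(p_A)$ is the characteristic polynomial of $uA^n$ (up to the leading unit), so its roots are exactly the $u\lambda_j^n$, all of which lie in $L$. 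Hence $\tilde\sigma(L) \subseteq L$. Applying the same reasoning to $\sigma^{-1}$ (which also satisfies a relation of the required shape: from (\ref{prop_eigenvalue_1}) one gets $A = \sigma^{-1}(u) \, \sigma^{-1}(B)\, \sigma^{-1}(A)^n \, \sigma^{-1}(B)^{-1}$, so $\sigma^{-1}(A)$ is conjugate to a scalar times an $n$-th root situation — or more simply, just use that $\tilde\sigma$ is an automorphism of $\bar K$ and $[\tilde\sigma(L):K] = [L:K]$ is finite, forcing equality) gives $\tilde\sigma(L) = L$. Restricting $\tilde\sigma$ to $L$ yields the desired extension of $\sigma$.

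The main obstacle — really the only subtlety — is making precise why $\sigma(p_A)$ and $p_A$ have splitting fields that coincide \emph{inside a common algebraic closure}, i.e. controlling the extension $\tilde\sigma$ so that it actually preserves $L$ rather than carrying it to some other isomorphic copy. This is handled cleanly by the observation that $\sigma(p_A) \sim \mathrm{charpoly}(uA^n)$ splits completely over $L$ (its roots being $u\lambda_j^n \in L$), so any root of $\sigma(p_A)$ lies in $L$; since $\tilde\sigma(\lambda_i)$ is such a root, $\tilde\sigma(L) \subseteq L$, and finiteness of $[L:K]$ upgrades this to equality. I would keep the proof to a few lines along these lines, citing the standard extension theorem for automorphisms to algebraic closures and the similarity-invariance of the characteristic polynomial.
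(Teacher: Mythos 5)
Your proposal is correct and follows essentially the same route as the paper: extend $\sigma$ to an automorphism of a fixed algebraic closure, observe via the relation $\sigma(A) = uBA^nB^{-1}$ that $\sigma$ carries each eigenvalue $\lambda$ of $A$ to a root of the characteristic polynomial of $uA^n$, namely to some $u\lambda_1^n$ with $\lambda_1$ an eigenvalue of $A$, conclude $\sigma(L) \subseteq L$, and upgrade to equality by finiteness of $[L:K]$. The paper states this directly in terms of eigenvalues rather than via similarity-invariance of the characteristic polynomial, but the content is identical; also note that no ``up to a unit'' qualification is needed since characteristic polynomials of similar matrices coincide exactly.
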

\begin{proof}
Let $K'$ be the algebraic closure of $K$. Then $\sigma$ extends to be an automorphism of $K'$. Considering $L$ as a subfield of $K'$, it suffices to show that $\sigma (L) = L$. Let $\lambda$ be an eigenvalue of $A$. Clearly, $\sigma (\lambda)$ is an eigenvalue of $\sigma (A)$. By (\ref{prop_eigenvalue_1}), there is an eigenvalue $\lambda_{1}$ of $A$ such that $\sigma (\lambda) = u \lambda_{1}^{n}$. Since $u \in L$ and $\lambda_{1} \in L$, we see $\sigma (\lambda) \in L$. Thus $\sigma (L) \subseteq L$. Since $[L:K] = [\sigma (L): K]$ is finite, we obtain $\sigma (L) = L$.
\end{proof}

Let $\overline{R}$ be the integral closure of $R$ in $L$. Then $\overline{R}$ is a domain (\cite[Corollary~5.3]{Atiyah_Macdonald}). By the argument as in the proof of \cite[Proposition~5.12]{Atiyah_Macdonald}, we immediately get the following lemma.
\begin{lemma}
The field $L$ is the quotient field of $\overline{R}$.
\end{lemma}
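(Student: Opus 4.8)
The plan is to verify the two inclusions $\mathrm{Frac}(\overline{R}) \subseteq L$ and $L \subseteq \mathrm{Frac}(\overline{R})$. The first is automatic: $\overline{R}$ is by construction a subring of $L$, and it is a domain (as noted just above, being a subring of the field $L$), so $\mathrm{Frac}(\overline{R})$ makes sense and is the smallest subfield of $L$ containing $\overline{R}$; in particular it sits inside $L$. Thus the entire content of the lemma lies in the reverse inclusion $L \subseteq \mathrm{Frac}(\overline{R})$.

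To prove $L \subseteq \mathrm{Frac}(\overline{R})$, I would fix an arbitrary $\alpha \in L$ and exhibit it as a quotient of two elements of $\overline{R}$, following verbatim the computation behind \cite[Proposition~5.12]{Atiyah_Macdonald}. Since $L/K$ is a finite algebraic extension and $K$ is the quotient field of $R$, the element $\alpha$ is a root of some monic polynomial over $K$; clearing denominators produces a relation $a_{m}\alpha^{m} + a_{m-1}\alpha^{m-1} + \cdots + a_{0} = 0$ with all $a_{i} \in R$ and $a_{m} \neq 0$. Multiplying this relation by $a_{m}^{m-1}$ and rewriting it in terms of $\beta := a_{m}\alpha$ turns it into a \emph{monic} integral equation for $\beta$ with coefficients in $R$, so $\beta \in \overline{R}$; since also $a_{m} \in R \subseteq \overline{R}$, we conclude $\alpha = \beta / a_{m} \in \mathrm{Frac}(\overline{R})$, as desired.

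I do not expect any genuine obstacle: this is the classical ``clearing denominators'' argument, and neither the Noetherian hypothesis on $R$ nor the presence of $\sigma$ plays any role in the lemma itself. The only points that deserve a word of care are bookkeeping: that $\overline{R}$ is a domain, so that $\mathrm{Frac}(\overline{R})$ is meaningful (which holds because $\overline{R} \subseteq L$), and that $\overline{R}$ was defined as the integral closure of $R$ \emph{inside $L$} rather than in some larger field, which is exactly how it was set up in the preceding paragraph. With these in hand the lemma follows at once, which is why the paper can invoke it as immediate.
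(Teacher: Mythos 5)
Your argument is correct and is exactly the one the paper invokes: the paper simply cites the clearing-denominators computation from \cite[Proposition~5.12]{Atiyah\_Macdonald}, which you have reproduced faithfully, together with the (correct) observation that the Noetherian hypothesis and $\sigma$ play no role here.
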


Furthermore, by the Mori-Nagata Theorem (\cite[Proposition~6]{Nishimura}, see also \cite[(33.10)]{Nagata}), $\overline{R}$ is a Krull domain. By definition, $\overline{R}$ being a Krull domain means there exists a family $\{ v_{i} \mid i \in \mathfrak{I} \}$ of discrete valuations $v_{i}: L \setminus \{ 0 \} \rightarrow \mathbb{Z}$ such that
\begin{equation}\label{eqn_DVR}
\overline{R} \setminus \{ 0 \} = \bigcap_{i \in \mathfrak{I}} v_{i}^{-1} ([0, +\infty))
\end{equation}
and, for each $x \in L \setminus \{ 0 \}$, $v_{i} (x) = 0$ for all but finitely many $i \in \mathfrak{I}$. See \cite[\S~12]{Matsumura2} for more details about Krull domains, and see \cite[p.~94]{Atiyah_Macdonald} for the definition of a discrete valuation.

\begin{lemma}\label{lem_matrix_eigenvalue}
For each eigenvalue $\lambda$ of $A$ and each integer $m>0$, there exist an $x_{m} \in L$ and a unit $u_{m} \in R$ such that $\lambda = u_{m} x_{m}^{n^{m}}$.
\end{lemma}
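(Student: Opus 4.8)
The plan is to translate the matrix relation (\ref{prop_eigenvalue_1}) into a single identity between finite subsets of $L$, and then iterate the automorphism $\sigma$.

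Write $E\subseteq L$ for the set of eigenvalues of $A$ (all of them lie in $L$ by the choice of $L$). First I would record two elementary facts. Since $\sigma$ is a field automorphism of $L$ fixing the indeterminate $t$, it carries the characteristic polynomial of $A$ to that of $\sigma(A)$ by acting on coefficients; hence the eigenvalue set of $\sigma(A)$ is $\sigma(E):=\{\sigma(\lambda):\lambda\in E\}$. On the other hand, $BA^{n}B^{-1}$ has the same characteristic polynomial as $A^{n}$, so its eigenvalues are $\{\mu^{n}:\mu\in E\}$, and multiplying a matrix by the unit $u$ multiplies each eigenvalue by $u$; hence the eigenvalue set of $uBA^{n}B^{-1}$ is $uE^{n}:=\{u\mu^{n}:\mu\in E\}$. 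Comparing these two descriptions of the eigenvalue set of $\sigma(A)$ through (\ref{prop_eigenvalue_1}) gives the set identity
\[
\sigma(E)=uE^{n}.
\]

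Next I would iterate. I claim that for every $m\geq 1$ there is a unit $w_{m}$ of $R$ with $\sigma^{m}(E)=w_{m}E^{n^{m}}$, where $E^{n^{m}}:=\{\mu^{n^{m}}:\mu\in E\}$. This follows by induction: $w_{1}=u$ works, and applying $\sigma$ to $\sigma^{m}(E)=w_{m}E^{n^{m}}$ and inserting $\sigma(E)=uE^{n}$ yields
\[
\sigma^{m+1}(E)=\sigma(w_{m})\,(\sigma(E))^{n^{m}}=\sigma(w_{m})\,u^{n^{m}}\,E^{n^{m+1}},
\]
so one takes $w_{m+1}=\sigma(w_{m})u^{n^{m}}$, which is again a unit of $R$ because $\sigma$, and likewise $\sigma^{-1}$, carries units of $R$ to units of $R$. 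Now fix an eigenvalue $\lambda$ of $A$. Then $\sigma^{m}(\lambda)\in\sigma^{m}(E)=w_{m}E^{n^{m}}$, so $\sigma^{m}(\lambda)=w_{m}\mu^{n^{m}}$ for some $\mu\in E$, and applying the automorphism $\sigma^{-m}$ of $L$ gives
\[
\lambda=\sigma^{-m}(w_{m})\,(\sigma^{-m}(\mu))^{n^{m}}.
\]
Setting $u_{m}=\sigma^{-m}(w_{m})$, a unit of $R$, and $x_{m}=\sigma^{-m}(\mu)\in L$ yields the lemma.

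I do not expect a genuine obstacle here: the whole content is the passage from the matrix equation to the eigenvalue-set equation, together with the bookkeeping that the scalars $w_{m}$ remain units of $R$ under $\sigma$ and $\sigma^{-m}$. The only points worth stating carefully are that conjugate matrices share a characteristic polynomial, that $\sigma$ acts on characteristic polynomials coefficient-wise, and---looking ahead to the proof of Proposition~\ref{prop_eigenvalue}---that a unit of $R$ has value $0$ under every discrete valuation $v_{i}$ of the Krull domain $\overline{R}$. Indeed, granting that last point, $\lambda=u_{m}x_{m}^{n^{m}}$ for all $m$ forces $v_{i}(\lambda)$ to be divisible by $n^{m}$ for every $m$, hence $v_{i}(\lambda)=0$ for all $i$; then (\ref{eqn_DVR}) shows that both $\lambda$ and $\lambda^{-1}$ lie in $\overline{R}$, i.e., are integral over $R$.
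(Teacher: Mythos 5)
Your proof is correct and reaches the paper's conclusion by essentially the same iteration. The only difference is that you pass from the matrix identity $\sigma(A)=uBA^nB^{-1}$ to the eigenvalue-set identity $\sigma(E)=uE^n$ \emph{before} iterating, whereas the paper iterates the matrix identity first, obtaining $A = u_m B_m\,\sigma^{-m}(A^{n^m})\,B_m^{-1}$, and only then reads off the eigenvalue statement; both variants rely on the same facts (conjugate matrices share a characteristic polynomial, $\sigma$ acts coefficientwise, eigenvalues of $A^{n}$ are $n$-th powers, and $\sigma^{\pm 1}$ preserve units of $R$), so the content is the same.
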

\begin{proof}
Applying (\ref{prop_eigenvalue_1}) repeatedly, we obtain
\[
\sigma^{2} (A) = \sigma (u) \sigma (B) \sigma (A)^{n} \sigma (B)^{-1} = \sigma (u) \sigma (B) u^{n} B A^{n^{2}} B^{-1} \sigma (B)^{-1}
\]
and hence
\[
A = u_{2} B_{2} \cdot \sigma^{-2} \left( A^{n^{2}} \right) \cdot B_{2}^{-1},
\]
where $u_{2} = \sigma^{-1} (u) \sigma^{-2} \left( u^{n} \right)$ is a unit of $R$ and $B_{2} = \sigma^{-1} (B) \sigma^{-2} (B)$. By an induction on $m$, we obtain
\[
A = u_{m} B_{m} \cdot \sigma^{-m} \left( A^{n^{m}} \right) \cdot B_{m}^{-1}
\]
with $u_{m}$ a unit of $R$. Now the desired conclusion follows.
\end{proof}

\begin{proof}[Proof of Proposition \ref{prop_eigenvalue}]
Suppose $\lambda \neq 0$ is an eigenvalue of $A$. Let $v_{i}$ be a discrete valuation in (\ref{eqn_DVR}). By Lemma \ref{lem_matrix_eigenvalue}, for each integer $m>0$, we have $\lambda = u_{m} x_{m}^{n^{m}}$ for a unit $u_{m} \in R$ and $x_{m} \in L$. Then
\[
v_{i} (\lambda) = v_{i} (u_{m}) + n^{m} v_{i} (x_{m}) = 0 + n^{m} v_{i} (x_{m}) = n^{m} v_{i} (x_{m}).
\]
Thus $n^{m}$ divides $v_{i} (\lambda)$ for each $m>0$. Since $n>1$, we infer $v_{i} (\lambda) =0$ which also implies $v_{i} (\lambda^{-1}) =0$. By (\ref{eqn_DVR}), both $\lambda$ and $\lambda^{-1}$ are in $\overline{R}$, which finishes the proof.
\end{proof}

\section{CW Complex: Homological Finiteness}\label{sec_cw_homology}
In this section, we prove a homological finiteness theorem which is the algebraic topological foundation of this work. Let us first explain its assumption, which is more general than that of Theorem \ref{thm_cw_homotopy}.

Let $X$ be a connected CW complex. Suppose there is an epimorphism
\begin{equation}\label{eqn_pi1}
\varphi: \  \pi_{1} (X) \rightarrow G \times \mathbb{Z},
\end{equation}
where $G$ is a finitely generated abelian group. Let $X_{k}$ be the $k$-cover of $X$ with $\pi_{1} (X_{k}) = \varphi^{-1} (G \times k\mathbb{Z})$, and $X_{\infty}$ be the infinite cyclic cover of $X$ with $\pi_{1} (X_{\infty}) = \varphi^{-1} (G \times 0)$.  Assume that there is a homotopy equivalence $h: X \rightarrow X_{k}$ for some $k>1$, such that the induced isomorphism $
h_{\sharp}: \pi_{1} (X) \overset{\cong}{\longrightarrow} \pi_{1} (X_{k})
$
satisfies
\begin{equation}\label{eqn_assumption1}
h_{\sharp} (\ker \varphi) = \ker \varphi \quad \text{and} \quad h_{\sharp} (\varphi^{-1} (G \times 0)) = \varphi^{-1} (G \times 0).
\end{equation}
In the special case that $\varphi = \mathrm{id}$, the space $X$ satisfies the assumption of Theorem \ref{thm_cw_homotopy}.

By (\ref{eqn_assumption1}), the above $h_{\sharp}$ induces an isomorphism
\begin{equation}\label{eqn_assumption2}
[h_{\sharp}] \colon G \times \mathbb{Z} \rightarrow G \times (k\mathbb{Z}) \quad \text{with} \quad [h_{\sharp}] (G\times 0) = G \times 0.
\end{equation}
The epimorphism $\varphi \colon \pi_1(X_{\infty}) \to G$ gives rise to an action
\begin{equation}\label{eqn_local_system}
\pi_{1} (X_{\infty}) \times G  \rightarrow  G,  \  \ (\xi, g)  \mapsto  \varphi (\xi) \cdot g.
\end{equation}
Let $R_0$ be a commutative Noetherian ring,  and let $R=R_0[G]$ be the group ring. We identify the group ring $R_0[G \times \mathbb Z]$ with the Laurent polynomial ring $R[t,t^{-1}]$, where $t$ corresponds to $1 \in \mathbb Z$. Both $R$ and $R[t, t^{-1}]$ are Noetherian commutative rings.  Under the action of (\ref{eqn_local_system}), $R$ is a $\pi_1(X_{\infty})$-module, and $t$ acts on $X_{\infty}$ via deck transformation. The cellular chain complex $C_{\bullet}(X_{\infty};R)$ with local coefficient system $R$ is a chain complex of free $R[t, t^{-1}]$-modules, with a basis corresponding to the cells of $X$.

\begin{proposition}\label{prop_finite}
If $X$ is of finite type, then the homology group $H_{j} (X_{\infty}; R)$ is a finitely generated $R[t, t^{-1}]$-module for each $j$.
\end{proposition}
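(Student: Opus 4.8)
The plan is to obtain the conclusion as an immediate consequence of Noetherianity, using only the structure of the cellular chain complex just described. First I would note that, since $X$ is of finite type, it has only finitely many cells in each dimension; combined with the stated fact that $C_\bullet(X_\infty;R)$ is a complex of free $R[t,t^{-1}]$-modules with basis indexed by the cells of $X$, this gives that each chain module $C_j(X_\infty;R)$ is a \emph{finitely generated} free $R[t,t^{-1}]$-module.

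Next I would invoke the Noetherian property. The group ring $R=R_0[G]$ is a quotient of a Laurent polynomial ring in finitely many variables over the Noetherian ring $R_0$ (as $G$ is finitely generated abelian), hence Noetherian by the Hilbert basis theorem; one more variable shows $R[t,t^{-1}]=R_0[G\times\mathbb{Z}]$ is Noetherian as well --- this is already recorded above. Over a Noetherian ring every finitely generated module is Noetherian, so every submodule and every subquotient of $C_j(X_\infty;R)$ is again finitely generated over $R[t,t^{-1}]$. Applying this to the cycles $\ker(\partial_j\colon C_j(X_\infty;R)\to C_{j-1}(X_\infty;R))$ and then passing to the quotient by $\im(\partial_{j+1})$ shows that $H_j(X_\infty;R)$ is a finitely generated $R[t,t^{-1}]$-module for every $j$, as claimed. (Finite-dimensionality of $X$ is not needed: the complex may be unbounded above, since $H_j$ only involves $C_{j+1}\to C_j\to C_{j-1}$.)

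There is no real obstacle in this argument; the only point I would want to be comfortable with --- and which the excerpt asserts --- is the identification of $C_\bullet(X_\infty;R)$ as a complex of free $R[t,t^{-1}]$-modules with basis the cells of $X$. This comes from the facts that $X_\infty\to X$ is the infinite cyclic cover whose deck group $\langle t\rangle$ is infinite cyclic, so every cell of $X$ lifts to a free $\langle t\rangle$-orbit of cells of $X_\infty$, and that the local system $R$ on $X_\infty$ is pulled back from the local system on $X$ determined by $\varphi$; equivalently $C_\bullet(X_\infty;R)\cong C_\bullet(X;R[t,t^{-1}])$ as complexes of $R[t,t^{-1}]$-modules, where $R[t,t^{-1}]=R_0[G\times\mathbb{Z}]$ carries the translation action of $G\times\mathbb{Z}$ on its own group ring. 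Granting this, the two paragraphs above finish the proof.
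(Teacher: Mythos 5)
Your proof is correct and follows essentially the same approach as the paper's (which is just the two-sentence version: $C_j(X_\infty;R)$ is a finitely generated free $R[t,t^{-1}]$-module since $X$ is of finite type, and the conclusion follows from the Noetherianity of $R[t,t^{-1}]$). You have simply spelled out the standard Noetherian-ring argument and the identification of the chain complex that the paper takes as given.
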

\begin{proof}
Since $X$ is of finite type, $C_{j} (X_{\infty}; R)$ is a free $R[t, t^{-1}]$-module of finite rank for each $j$. The conclusion follows from the Noetherianity of $R[t, t^{-1}]$.
\end{proof}

The main result of this section is the following stronger finiteness theorem.
\begin{theorem}\label{thm_cw}
 If $X$ is of finite type, then the homology group $H_{j} (X_{\infty}; R)$ is a finitely generated $R$-module for each $j$.
 \end{theorem}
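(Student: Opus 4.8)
The plan is to reduce the finiteness of $H_j(X_\infty;R)$ as an $R$-module to the algebraic criterion of Theorem~\ref{thm_noether}, applied with $R_0$-algebra $S = R[t,t^{-1}]$ (generators $t_1 = t$, $t_2 = t^{-1}$; note $R$ in the statement of Theorem~\ref{thm_noether} plays the role of our $R$ here, $S$ the role of $R[t,t^{-1}]$). By Proposition~\ref{prop_finite}, $\mathfrak M := H_j(X_\infty;R)$ is a finitely generated $R[t,t^{-1}]$-module, so it suffices to verify condition~(1) (equivalently~(2)) of Theorem~\ref{thm_noether}: for every prime $P \in \operatorname{Spec}(R)$, the vector space $\kappa(P)\otimes_R \mathfrak M$ is finite-dimensional over $\kappa(P)$ and the eigenvalues of $t$ (and of $t^{-1}$) acting on it are integral over $R/P$. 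The first half — finite-dimensionality — should follow by a base-change/universal-coefficient argument: $\kappa(P)\otimes_R \mathfrak M$ is a subquotient of $H_\bullet$ of the complex $\kappa(P)\otimes_R C_\bullet(X_\infty;R)$, which is a complex of $\kappa(P)[t,t^{-1}]$-modules; the real input is that $\kappa(P)\otimes_R H_\bullet(X_\infty;R)$ is finitely generated over the PID $\kappa(P)[t,t^{-1}]$ (again from Noetherianity plus finite type of $X$), and we must argue it is in fact torsion, hence finite-dimensional over $\kappa(P)$.

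The heart of the matter is the eigenvalue condition, and this is where the self-covering hypothesis and Proposition~\ref{prop_eigenvalue} enter. The homotopy equivalence $h\colon X \to X_k$ lifts, compatibly with the coefficient system $R$, to a homotopy equivalence of infinite cyclic covers $\tilde h\colon X_\infty \to (X_k)_\infty$. Since $X_k$ is itself a cyclic cover of $X$ with the same infinite cyclic cover, $(X_k)_\infty$ is canonically $X_\infty$, but the deck transformation $t$ of $X_\infty$ viewed through this identification becomes $t^{\pm k}$ (this is exactly the content of the isomorphism~(\ref{eqn_assumption2}) and the orientation discussion preceding Definition~\ref{def_deck}). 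Meanwhile the automorphism $\sigma$ of $R = R_0[G]$ induced by the $\mathbb Z$-factor acting on $G$ — precisely the automorphism appearing because $h_\sharp$ need not fix $G\times 0$ pointwise, only setwise — twists the coefficients. Assembling these, $\tilde h$ induces on $\mathfrak M$ a $\sigma$-semilinear isomorphism intertwining the $t$-action with the $t^{\pm k}$-action; choosing bases of the free modules $C_\bullet(X_\infty;R)$ and passing to homology after inverting suitably, one obtains over the quotient field $K$ of $R/P$ (when $R/P$ is a domain; handle the general $P$ by replacing $R$ by $R/P$, which is still Noetherian) a matrix identity of the shape $\sigma(A) = u\,B A^{k} B^{-1}$ with $A$ representing $t$ on $\kappa(P)\otimes_R\mathfrak M$, $B$ invertible, $u$ a unit. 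Proposition~\ref{prop_eigenvalue}, with $n = k > 1$, then yields that every nonzero eigenvalue $\lambda$ of $A$ has both $\lambda$ and $\lambda^{-1}$ integral over $R/P$; since $t$ is invertible on $\mathfrak M$, $A$ has no zero eigenvalue, so all eigenvalues of $t$ and of $t^{-1}$ are integral over $R/P$, which is Property~\ref{pt_eigenvalue}.

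I would organize the write-up as follows: (i) produce the lift $\tilde h$ and the identification $(X_k)_\infty \simeq X_\infty$ together with the precise description of how $t$ transforms and how the coefficient automorphism $\sigma$ arises; (ii) fix cellular bases so that the chain-level map is literally a matrix $\widetilde A$ over $R[t,t^{-1}]$ satisfying a relation $\sigma(\widetilde A) = u\,\widetilde B\,\widetilde A^{k}\,\widetilde B^{-1}$ after the appropriate localization, and pass to homology; (iii) for a fixed $P$, base change to $\kappa(P)$, check finite-dimensionality of $\kappa(P)\otimes_R\mathfrak M$ over $\kappa(P)$, and invoke Proposition~\ref{prop_eigenvalue} to get the eigenvalue integrality; (iv) quote Theorem~\ref{thm_noether}, implication $(2)\Rightarrow(3)$ (or $(1)\Rightarrow(3)$), to conclude $\mathfrak M$ is a finitely generated $R$-module.

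The step I expect to be the main obstacle is (ii): getting an \emph{honest} matrix equation at the level of a finitely generated module, rather than merely a statement about chain complexes up to homotopy. The subtleties are that $\tilde h$ is only a homotopy equivalence (so the chain-level map is invertible only up to chain homotopy, and one must either work in the derived category or pass to a minimal/finitely generated model), that homology of $X_\infty$ need not be free over $R[t,t^{-1}]$ so "the action of $t$" is a module endomorphism without a preferred matrix until one base-changes to a field, and that the orientation sign in $t \mapsto t^{\pm k}$ must be tracked so that one lands on $A^{k}$ and not $A^{-k}$ (in the orientation-reversing case one should apply the argument to $t^{-1}$ in place of $t$, replacing $A$ by $A^{-1}$, which is legitimate since both $t$ and $t^{-1}$ are being controlled). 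Once the relation $\sigma(A) = uBA^kB^{-1}$ is in hand over $\kappa(P)$, everything else is a direct appeal to the two foundational results already proved.
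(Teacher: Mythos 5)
Your high-level strategy is the one the paper uses: set $S=R[t,t^{-1}]$, note $\mathfrak M := H_j(X_\infty;R)$ is finitely generated over $S$ (Proposition~\ref{prop_finite}), verify Property~\ref{pt_eigenvalue} for the associated primes, feed the eigenvalue integrality through Proposition~\ref{prop_eigenvalue}, and invoke Theorem~\ref{thm_noether}~$(2)\Rightarrow(3)$. But you mislocate the hard part, and your finite-dimensionality argument as written has a real gap.

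The claim that $\kappa(P)\otimes_R \mathfrak M$ is ``a subquotient of $H_\bullet$ of $\kappa(P)\otimes_R C_\bullet(X_\infty;R)$'' is false over a general Noetherian $R$. The universal coefficient spectral sequence has $E^2_{0,j}=\kappa(P)\otimes_R H_j(X_\infty;R)$, and what lands inside $H_j(X_\infty;\kappa(P))$ is only the $E^\infty_{0,j}$ page, which is a \emph{quotient} of $E^2_{0,j}$ by the images of the differentials $d_r\colon E^r_{r,j-r+1}\to E^r_{0,j}$. Since $R=R_0[G]$ is not a PID (unless $G=0$ and $R_0$ is a PID), these differentials need not vanish, and $\kappa(P)\otimes_R\mathfrak M$ is not controlled by $H_j(X_\infty;\kappa(P))$ alone. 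You also assert ``the real input is that $\kappa(P)\otimes_R H_\bullet(X_\infty;R)$ is finitely generated over the PID $\kappa(P)[t,t^{-1}]$,'' but a nonzero free $\kappa(P)[t,t^{-1}]$-module is finitely generated and still infinite-dimensional over $\kappa(P)$; the nontrivial fact is that the relevant modules are \emph{torsion}, and this is exactly where the self-covering hypothesis enters a second time, not via Noetherianity. Concretely, the paper needs three steps you have not supplied: (a) Lemma~\ref{lem_dimension_bound}, which uses the tower of homotopy equivalences $h_i\colon X\to X_{k^i}$ to bound $\dim_{\kappa(P)}H_j(X_{k^i};\kappa(P))$ uniformly by the number of $j$-cells of $X$; (b) Lemma~\ref{lem_vector_space}, which runs the Wang exact sequence for $X_\infty\to X_q$ and shows that a nonzero free summand would force $\dim H_j(X_{k^i};\kappa(P))\to\infty$, contradicting~(a), hence $H_j(X_\infty;\kappa(P))$ is $\kappa(P)[t,t^{-1}]$-torsion and finite-dimensional; and (c) an induction on the degree $j$ through the universal coefficient spectral sequence: once $H_m(X_\infty;R)$ is known finitely generated over $R$ for $m<j$, the groups $E^2_{i,m}=\mathrm{Tor}_i(\kappa(P),H_m)$ are finite-dimensional, so the kernel of $E^2_{0,j}\twoheadrightarrow E^\infty_{0,j}$ is finite-dimensional, and combined with~(b) one gets $\dim_{\kappa(P)}(\kappa(P)\otimes_R\mathfrak M)<\infty$. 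Without this induction there is no direct route from the torsion-ness of $H_j(X_\infty;\kappa(P))$ to the finite-dimensionality of $\kappa(P)\otimes_R H_j(X_\infty;R)$.

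By contrast, the step you flag as the main obstacle --- getting an honest matrix identity $\sigma(A)=uBA^kB^{-1}$ over $\kappa(P)$ --- is the more routine part. The paper handles it in Lemma~\ref{lem_eigenvalue}: the lift $\overline h_*$ is only $\sigma$-semilinear, so one conjugates by the semilinear substitution $\Phi_\sigma$ on $\kappa(P)^m$ to produce genuine matrices, replaces $\overline h_*$ by $\overline h_*^2$ to fix the orientation sign (landing on $t^{k^2}$ rather than $t^{\pm k}$), and replaces $\overline h_*$ by a further power $\overline h_*^i$ so that $\sigma^i$ fixes the given associated prime $P$ (this is why the statement is restricted to $P\in\mathrm{Ass}_R(\mathfrak M)$, which is exactly what Theorem~\ref{thm_noether}~(2) requires). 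These are the ``passing to homology after inverting suitably'' details you leave implicit, and they are manageable; the structural work is the finiteness induction you have skipped.
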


We prove this theorem in the rest of this section.

For each $P \in \mathrm{Spec}(R)$, let $\kappa (P)$ be the residual field of $R$ at $P$, i.e. the quotient field of $R/P$. We first study the eigenvalues of the action of the deck transformation $t$ on $\kappa (P) \otimes_{R} H_{j} (X_{\infty}; R)$.

Lifting the homotopy equivalence $h \colon X \rightarrow X_{k}$ to the covering space $X_{\infty}$, one gets a homotopy equivalence $\overline{h} \colon X_{\infty} \rightarrow X_{\infty}$. By assumption (\ref{eqn_assumption2}), $\overline{h}_{\sharp} \colon \pi_{1} (X_{\infty}) \rightarrow \pi_{1} (X_{\infty})$ induces an automorphism $
[h_{\sharp}]|_{G \times 0} \colon \ G \rightarrow G$. Extend the isomorphism $[h_{\sharp}] \colon G \times 0 \to G \times 0$ in (\ref{eqn_assumption2}) to an automorphism $\sigma$ of the group ring $R=R_0[G]$:
\[
\sigma \left( \sum_{i} a_{i} g_{i} \right) = \sum_{i} a_{i} [h_{\sharp}] (g_{i}),
\]
where $a_{i} \in R_{0}$ and $g_{i} \in G$. Then the homotopy equivalence $\overline{h}$ induces a $\sigma$-twisted $R$-module isomorphism $\overline{h}_{*} \colon H_{j} (X_{\infty}; R) \rightarrow H_{j} (X_{\infty}; R)$. Here $\sigma$-twisted means, for any $r \in R$ and $x \in H_{j} (X_{\infty}; R)$, the following equality always holds
\begin{equation}\label{eqn_sigma_linear}
\overline{h}_{*} (rx) = \sigma (r) \overline{h}_{*} (x).
\end{equation}

Next we examine the commutativity between $\overline h_*$ and the action of $t$ on $H_{j} (X_{\infty}; R)$. The isomorphism $[h_{\sharp}]$ in (\ref{eqn_assumption2}) has the form
\begin{equation}\label{eqn_deck1}
[h_{\sharp}] \colon \ G \times \mathbb{Z}  \rightarrow  G \times \mathbb{Z}, \ \ \
(g,n)  \mapsto  (\sigma (g) + n g_{0}, \pm kn),
\end{equation}
where $[h_{\sharp}] (0,1) = (g_{0}, \pm k)$. Generalizing Definition \ref{def_deck}, we say $h$ \emph{preserves the orientation of deck transformations} if $[h_{\sharp}] (0,1) = (g_{0}, k)$ and $h$ \emph{reverses the orientation of deck transformations} if otherwise.
From  (\ref{eqn_deck1}), we see that the chain map $\overline h_* \colon C_{\bullet} (X_{\infty}; R) \to C_{\bullet} (X_{\infty}; R)$ satisfies \begin{equation}\label{eqn_conjugate}
\overline{h}_{*} \cdot t = g_{0} \cdot t^{\pm k} \cdot \overline{h}_{*},
\end{equation}
and hence the same equality holds for $\overline h_* \colon H_{\bullet} (X_{\infty}; R) \to H_{\bullet} (X_{\infty}; R) $.

By Proposition \ref{prop_finite} and \cite[Theorem 6.1, 6.5]{Matsumura2}, $\mathrm{Ass}_{R[t, t^{-1}]} (H_{j} (X_{\infty}; R))$ is a nonempty finite set provided that $H_{j} (X_{\infty}; R)$ is nonzero. Also by \cite[(9.A)]{Matsumura1},
\[
\mathrm{Ass}_{R} (H_{j} (X_{\infty};R)) = \{ Q \cap R \mid Q \in \mathrm{Ass}_{R[t, t^{-1}]} (H_{j} (X_{\infty}; R)) \},
\]
which is also finite.

\begin{lemma}\label{lem_eigenvalue}
Suppose $P \in \mathrm{Ass}_{R} (H_{j} (X_{\infty}; R))$, and $\kappa (P) \otimes_{R} H_{j} (X_{\infty}; R)$ is a nonzero finite-dimensional $\kappa (P)$-vector space. Then the eigenvalues of $t$ and $t^{-1}$ actions on this vector space are integral over $R/P$.
\end{lemma}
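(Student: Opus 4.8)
The plan is to deduce the lemma from Proposition \ref{prop_eigenvalue}, applied with the Noetherian domain $R/P$ in place of $R$ and its quotient field $\kappa(P)$ in place of $K$. Set $V := \kappa(P) \otimes_{R} H_{j}(X_{\infty}; R)$, the finite-dimensional $\kappa(P)$-vector space of the hypothesis, and let $A$ be the matrix, in some $\kappa(P)$-basis of $V$, of the deck transformation $t$ acting on $V$. Since $t$ is a unit in $R[t,t^{-1}]$, it acts invertibly on $H_{j}(X_{\infty};R)$ and hence on $V$, so $A$ is invertible; in particular every eigenvalue of $t$ (and every eigenvalue of $t^{-1}$, which is the inverse of an eigenvalue of $t$) is nonzero, and it suffices to show that for each eigenvalue $\lambda$ of $A$ both $\lambda$ and $\lambda^{-1}$ are integral over $R/P$.

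The first point to address is that $\overline{h}_{*}$ is only a $\sigma$-twisted isomorphism of $H_{j}(X_{\infty};R)$, so it descends to a map on $V$ only when $\sigma(P)=P$. To arrange this, I would first note that $\sigma$ permutes $\mathrm{Ass}_{R}(H_{j}(X_{\infty};R))$: if $P' = \mathrm{Ann}_{R}(x)$ for $x \in H_{j}(X_{\infty};R)$, then, using the injectivity of $\overline{h}_{*}$ and the identity (\ref{eqn_sigma_linear}), one checks $\mathrm{Ann}_{R}(\overline{h}_{*}(x)) = \sigma(P')$; since $\sigma(P')$ is again prime, it lies in $\mathrm{Ass}_{R}(H_{j}(X_{\infty};R))$, and applying the same argument to $\overline{h}_{*}^{-1}$ shows $\sigma$ restricts to a bijection of this finite set. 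Hence there is an integer $N \geq 1$ — which I take to be even — with $\sigma^{N}(P)=P$, so $\sigma^{N}$ descends to an automorphism $\overline{\sigma}^{N}$ of $R/P$, extended to $\kappa(P)$ as in Proposition \ref{prop_eigenvalue}, and $\overline{h}_{*}^{N}$ descends to a $\overline{\sigma}^{N}$-semilinear bijection $\Phi$ of $V$.

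Next I would iterate the conjugation relation (\ref{eqn_conjugate}). From $\overline{h}_{*}\cdot t = g_{0}\cdot t^{\pm k}\cdot \overline{h}_{*}$, with $g_{0} \in G$ a unit of $R$, an induction on $N$ gives $\overline{h}_{*}^{N}\cdot t = w\cdot t^{(\pm k)^{N}}\cdot \overline{h}_{*}^{N}$ for some unit $w \in R$ (a product of $\sigma$-iterates of $g_{0}^{\pm 1}$); since $N$ is even, $(\pm k)^{N}=k^{N}\geq k^{2}>1$. Passing to $V$ and writing $\Phi(v)=B\,\overline{\sigma}^{N}(v)$, where $\overline{\sigma}^{N}$ acts coordinatewise and $B \in \mathrm{GL}(V)$, the operator relation becomes, after cancelling the semilinear part, an identity $\overline{\sigma}^{N}(A) = \overline{w}\,B^{-1}A^{k^{N}}B$ with $\overline{w} \in R/P$ a unit. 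This is precisely relation (\ref{prop_eigenvalue_1}) for the domain $R/P$, the automorphism $\overline{\sigma}^{N}$, the matrices $A$ and $B^{-1}$, the unit $\overline{w}$, and the exponent $n=k^{N}>1$; so Proposition \ref{prop_eigenvalue} gives that $\lambda$ and $\lambda^{-1}$ are integral over $R/P$ for each nonzero eigenvalue $\lambda$ of $A$, which completes the proof.

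I expect the only real care to be needed in the bookkeeping of the $\sigma$-semilinearity: ensuring a power of $\sigma$ fixes $P$ so that $\overline{h}_{*}^{N}$ descends to $V$, checking that the accumulated scalar $w$ stays a unit of $R$ under iteration (and maps to a unit of $R/P$), and choosing $N$ even so that the exponent in the descended matrix equation is a positive integer $>1$, which is what lets Proposition \ref{prop_eigenvalue} apply without modification. None of these steps is deep; the main obstacle is simply translating the semilinear operator relation on $H_{j}(X_{\infty};R)$ into the purely matrix-theoretic hypothesis of Proposition \ref{prop_eigenvalue}.
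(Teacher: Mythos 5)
Your proposal is correct and follows essentially the same route as the paper: both arguments first pass to a power of $\overline{h}_{*}$ to simultaneously fix $P$ under $\sigma$ and make the exponent on $t$ a positive power of $k$, then factor the resulting $\sigma$-semilinear automorphism of $\kappa(P)\otimes_{R}H_{j}(X_{\infty};R)$ into a $\kappa(P)$-linear part $B$ times the coordinatewise $\sigma$-action (the paper's $\Phi_{\sigma}$), and finally feed the matrix identity into Proposition \ref{prop_eigenvalue}. The only cosmetic difference is that you merge the paper's two sequential reductions (square to kill the sign, then take a further power to fix $P$) into the single choice of an even $N$ with $\sigma^{N}(P)=P$.
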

\begin{proof}
Lifting to covering spaces, the homotopy equivalence $h \colon X \to X_{k}$ induces a family of homotopy equivalences $X_{k^{i-1}} \to X_{k^{i}}$ for all $i >0$. Composing these maps we obtain homotopy equivalences $h_{i} \colon X \to X_{k^{i}}$ for all $i >0$. Note that $h_{i} \colon X \to X_{k^{i}}$ lifts to a homotopy equivalence $X_{\infty} \to X_{\infty}$ which is exactly $\overline h^{i}$, the $i$-fold composition of $\overline h$.

By (\ref{eqn_conjugate}), we have $\overline{h}_{*}^{2} \cdot t = g_{1} \cdot t^{k^{2}} \cdot \overline{h}_{*}^{2}$ for some $g_{1} \in G$. Replacing $\overline{h}_{*}$ with $\overline{h}_{*}^{2}$ if necessary, we may assume $h$ preserves the orientation of deck transformation.

We know $\overline{h}_{*} \colon H_{j} (X_{\infty}; R) \rightarrow H_{j} (X_{\infty}; R)$ is a $\sigma$-twisted isomorphism of $R$-modules (c.f. (\ref{eqn_sigma_linear})) and $\sigma$ is an automorphism of $R$. It follows from the definition of the associated prime ideals that if $P' \in \mathrm{Ass}_{R} (H_{j} (X_{\infty}; R))$, then so are $\sigma (P')$ and $\sigma^{-1} (P')$. Thus, $\sigma$ acts on the finite set $\mathrm{Ass}_{R} (H_{j} (X_{\infty}; R))$. Then $\sigma^{i}$ acts trivially on $\mathrm{Ass}_{R} (H_{j} (X_{\infty}; R))$ for some $i>0$. Note that $\overline{h}_{*}^{i}$ is $\sigma^{i}$-twisted. Replacing $\overline{h}_{*}$ with $\overline{h}_{*}^{i}$ if necessary, we may assume $\sigma (P) = P$. In this case, $\sigma$ induces automorphisms of $R/P$ and $\kappa (P)$.

The $\sigma$-twisted $R$-module isomorphism $\overline h_* \colon  H_{j} (X_{\infty}; R) \rightarrow H_{j} (X_{\infty}; R)$ extends to a well-defined map $\sigma \otimes \overline h_* \colon  \kappa (P) \otimes_{R} H_{j} (X_{\infty}; R) \rightarrow \kappa (P) \otimes_{R} H_{j} (X_{\infty}; R)$. By (\ref{eqn_conjugate}), we have the following equality
\[
t = (\sigma \otimes \overline{h}_{*})^{-1} \cdot g_{0} \cdot t^{k} \cdot (\sigma \otimes \overline{h}_{*}),
\]
of actions on $\kappa (P) \otimes_{R} H_{j} (X_{\infty}; R)$, where $t$ and $g_0$ stand for $\kappa (P)$-linear maps $1 \otimes t$ and $ 1 \otimes g_0$ respectively. Since $(\sigma \otimes \overline{h}_{*})^{-1}$ is $\sigma^{-1}$-twisted, we have
\begin{equation}\label{lem_eigenvalue_1}
t = \sigma^{-1} (g_{0}) \cdot (\sigma \otimes \overline{h}_{*})^{-1} \cdot t^{k} \cdot (\sigma \otimes \overline{h}_{*}).
\end{equation}

The map $\sigma \otimes \overline h_*$ is not $\kappa (P)$-linear, to obtain a $\kappa (P)$-linear transformation, we proceed as follows.
By assumption, $ \kappa (P) \otimes_{R} H_{j} (X_{\infty}; R)$ is a finite dimensional $\kappa (P)$-vector space. Identify this vector space with $\kappa (P)^{m}$ for some $m > 0$ and view $t$, $g_0$, $\sigma \otimes \overline h_*$ as maps on $\kappa (P)^m$. Define a map
\[
\Phi_{\sigma}  \colon \kappa (P)^{m} \to \kappa(P)^m, \ \ \ (x_{1}, \dots, x_{m}) \mapsto (\sigma (x_{1}), \dots, \sigma (x_{m})).
\]
Then, by (\ref{lem_eigenvalue_1}), we have
\begin{eqnarray*}
\Phi_{\sigma} t \Phi_{\sigma}^{-1} & = & g_{0} \cdot \left( \Phi_{\sigma} (\sigma \otimes \overline{h}_{*})^{-1} \right) \cdot t^{k} \cdot \left( (\sigma \otimes \overline{h}_{*}) \Phi_{\sigma}^{-1} \right) \\
& = & g_{0} \cdot \left( \Phi_{\sigma} (\sigma \otimes \overline{h}_{*})^{-1} \right) \cdot t^{k} \cdot \left(\Phi_{\sigma} (\sigma \otimes \overline{h}_{*})^{-1}\right )^{-1}.
\end{eqnarray*}
Now $t$, $\Phi_{\sigma} t \Phi_{\sigma}^{-1}$ and $\Phi_{\sigma}  (\sigma \otimes \overline{h}_{*})^{-1}$ are $\kappa (P)$-linear transformations of $\kappa (P)^m$ and hence can be identified with square matrices over $\kappa (P)$. If $t$ is represented by a matrix $A$, then $\Phi_{\sigma} t \Phi_{\sigma}^{-1}$ is represented by $\sigma (A)$. Note also that $g_{0} \in G$ is a unit in the group ring $R=R_{0}[G]$, hence a unit in $R/P$. We finish the proof by taking $R$, $A$, $B$, $u$ and $n$ in Proposition \ref{prop_eigenvalue} as $R/P$, $t$, $\Phi_{\sigma} (\sigma \otimes \overline{h}_{*})^{-1}$, $g_{0}$ and $k$ respectively.
\end{proof}

\begin{remark}\label{rmk_eigenvalue}
The proof of Lemma \ref{lem_eigenvalue} can be significantly simplified if $G=0$. In this case, $\overline{h}_{*}$ is an $R$-module isomorphism and it is easy to show that the eigenvalues of $t^{\pm 1}$ on $\kappa (P) \otimes_{R} H_{j} (X_{\infty}; R)$ are roots of unity and hence integral over $R/P$ for each $P \in \mathrm{Spec} (R)$, without referring to Proposition \ref{prop_eigenvalue}. See the proof of Theorem \ref{thm_cw_period} (2).
\end{remark}

We view $\kappa (P)$ as a $\pi_1(X)$-,  $\pi_1(X_q)$- and $\pi_1(X_{\infty})$-module via the homomorphisms
$$ \pi_1(X) \stackrel{\varphi}{\rightarrow} G \times \mathbb Z \rightarrow G, \ \   \pi_1(X_q) \stackrel{\varphi}{\rightarrow} G \times q \mathbb Z \rightarrow G \ \   \mathrm{and} \  \ \pi_1(X_{\infty}) \stackrel{\varphi}{
\rightarrow} G$$
respectively.  The cellular chain complex $C_{\bullet}(X_q;\kappa (P)) = \kappa (P) \otimes_R C_{\bullet}(X_q;R)$ and the homology groups $H_j(X_q;\kappa (P))$ are vector spaces over $\kappa (P)$.

\begin{lemma}\label{lem_dimension_bound}
The dimension $\dim_{\kappa (P)} H_{j} (X_{k^{i}}; \kappa (P))$ is less than or equal to the number of $j$-cells of $X$ for any $i>0$ and $P \in \mathrm{Spec} (R)$.
\end{lemma}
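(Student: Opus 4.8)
The plan is to bound $\dim_{\kappa(P)} H_j(X_{k^i};\kappa(P))$ by exploiting the self-covering equivalence $h$ to compare $X_{k^i}$ with $X$ itself. First I would observe that composing the lifts of $h$ produces a homotopy equivalence $h_i\colon X\to X_{k^i}$, exactly as constructed in the proof of Lemma \ref{lem_eigenvalue}. A homotopy equivalence induces an isomorphism on homology with any local coefficient system that is pulled back along it; here we must be careful that the $\kappa(P)$-structure on $\pi_1(X_{k^i})$ (via $\varphi\colon\pi_1(X_{k^i})\to G$) pulls back under $(h_i)_\sharp$ to the $\kappa(P)$-structure on $\pi_1(X)$, possibly after twisting by a power of the automorphism $\sigma$. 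Since $\sigma$ is an automorphism of $R$ fixing $\kappa(P)$ up to a field automorphism, the twist does not change the $\kappa(P)$-dimension. Hence $\dim_{\kappa(P)} H_j(X_{k^i};\kappa(P)) = \dim_{\kappa(P)} H_j(X;\kappa(P'))$ for the appropriate $P'=\sigma^{\mp i}(P)$ (or the same $P$ after passing to a power as in Lemma \ref{lem_eigenvalue}).

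Next I would bound $\dim_{\kappa(P)} H_j(X;\kappa(P))$ directly: the cellular chain complex $C_\bullet(X;\kappa(P))$ is a complex of $\kappa(P)$-vector spaces with $\dim_{\kappa(P)} C_j(X;\kappa(P))$ equal to the number of $j$-cells of $X$ (this uses that $X$ is of finite type, so each $C_j$ is finitely generated free over the appropriate group ring, with one basis element per $j$-cell, and tensoring with $\kappa(P)$ over $R$ via $\varphi$ gives a $\kappa(P)$-vector space of that dimension). Since $H_j$ is a subquotient of $C_j$, its dimension is at most the number of $j$-cells of $X$. Combining with the previous paragraph gives the claim for all $i>0$ and all $P\in\mathrm{Spec}(R)$.

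The main obstacle I anticipate is the bookkeeping with local coefficient systems and the $\sigma$-twist, i.e.\ making precise that pulling back the $\kappa(P)$-local system on $X_{k^i}$ along $h_i$ yields a $\kappa(P)$-local system on $X$ isomorphic (as a coefficient system for computing dimensions) to the standard one — the subtlety being that $h_i$ need not respect the splitting $G\times\mathbb{Z}$ on the nose, only up to the action of $\sigma$ and translation by $g_0$, as recorded in (\ref{eqn_deck1}). The resolution is the same device used in Lemma \ref{lem_eigenvalue}: $\sigma$ permutes a finite set (here the relevant data controlling the local system), so after replacing $h$ by a suitable power, or simply tracking $P'=\sigma^{\mp i}(P)\in\mathrm{Spec}(R)$, one reduces to comparing $\kappa(P')\otimes_R C_\bullet(X;R)$ with $\kappa(P)\otimes_R C_\bullet(X_{k^i};R)$, and an isomorphism of fields $\kappa(P')\cong\kappa(P)$ induced by $\sigma$ preserves dimension. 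Once this identification is set up the rest is immediate from the chain-level dimension count, so I would keep that part brief.
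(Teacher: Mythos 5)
Your proposal follows the same outline as the paper's: use $h_i\colon X\to X_{k^i}$ to transfer the computation from $X_{k^i}$ back to $X$ and then count cells. However, there is a genuine imprecision at the key step. You claim $\dim_{\kappa(P)}H_j(X_{k^i};\kappa(P))=\dim_{\kappa(P)}H_j(X;\kappa(P'))$ with $\kappa(P')$ the \emph{standard} local system on $X$ (coming from $\pi_1(X)\xrightarrow{\varphi}G\times\mathbb{Z}\to G$), and you assert that the twist is entirely accounted for by $\sigma$ (tracking $P'=\sigma^{\mp i}(P)$). This is not correct when $g_0\neq 0$. From (\ref{eqn_deck1}), the composite $\pi_1(X)\xrightarrow{\varphi}G\times\mathbb{Z}\xrightarrow{[h_{i,\sharp}]}G\times k^i\mathbb{Z}\to G$ sends $(g,n)$ to $\sigma^i(g)+(\text{term involving }n,g_0)$, not to $\sigma^i(g)$ alone. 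Thus the pulled-back local system $h_i^*\kappa(P)$ factors through a projection $\pi_1(X)\to G$ whose kernel is $\varphi^{-1}(L_i)$, where $L_i=[h_{i,\sharp}]^{-1}(0\times k^i\mathbb{Z})$ is a complement of $G\times 0$ in $G\times\mathbb{Z}$ that is generally different from $0\times\mathbb{Z}$. So $h_i^*\kappa(P)$ and $\kappa(P')$ are genuinely non-isomorphic as $\pi_1(X)$-modules; their homology groups need not have the same dimension, and the asserted equality can fail.

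The conclusion of your argument survives nonetheless, and the paper's fix is exactly what is needed: instead of identifying $h_i^*\kappa(P)$ with the standard local system, one introduces the auxiliary cover $\widehat{X}$ of $X$ with $\pi_1(\widehat{X})=\varphi^{-1}(L_i)$, so that $\widehat{X}\to X$ has deck group $G$ and $C_\bullet(X;h_i^*R)\cong C_\bullet(\widehat{X};R_0)$ is a complex of free $R$-modules with an $R$-basis in bijection with the cells of $X$. This gives $\dim_{\kappa(P')}C_j(X;h_i^*\kappa(P))=$ (number of $j$-cells of $X$) regardless of which complement $L_i$ appears, and the remainder of your argument (dimension of $H_j$ bounded by dimension of $C_j$, and a field isomorphism $\kappa(P')\cong\kappa(P)$ preserving dimension) goes through. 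The moral is that the $\sigma$-twist alone does not resolve the local-system bookkeeping; you also need the observation that \emph{any} complement of $G\times 0$ yields a $G$-cover of $X$ with the same cell count, which the auxiliary cover $\widehat{X}$ makes explicit.
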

\begin{proof}
The homotopy equivalence $h_{i}: X \rightarrow X_{k^{i}}$ induces an isomorphism
\begin{equation}\label{lem_dimension_bound_1}
{h_{i}}_{*}:\ H_{j} (X; h_{i}^{*} \kappa (P)) \overset{\cong}{\longrightarrow} H_{j} (X_{k^{i}}; \kappa (P)),
\end{equation}
where $h_{i}$ pulls back $\kappa (P)$ on $X_{k^{i}}$ to be $h_{i}^{*} \kappa (P)$ on $X$. By (\ref{eqn_assumption2}), the isomorphism ${h_{i}}_{\sharp}: \pi_{1} (X) \rightarrow \pi_{1} (X_{k^{i}})$ induces an isomorphism
\[
G \times \mathbb{Z} \overset{\cong}{\longrightarrow} G \times (k^{i} \mathbb{Z})
\]
such that $G \times 0$ maps onto $G \times 0$. Let $L_{i} \subseteq G \times \mathbb{Z}$ be the preimage of $0 \times (k^{i} \mathbb{Z})$. Then $L_{i}$ is a complement of $G \times 0$ in $G \times \mathbb{Z}$. Let $\widehat{X}$ be the cover of $X$ with $\pi_{1} (\widehat{X}) = \varphi^{-1} (L_{i})$. Then the deck transformation group of $\widehat{X} \rightarrow X$ is $G$. Note that
\[
C_{\bullet} (X; h_{i}^{*} \kappa (P)) = \kappa (P') \otimes_{R} C_{\bullet} (X; h_{i}^{*} R) \qquad \text{and} \qquad C_{\bullet} (X; h_{i}^{*} R) \cong C_{\bullet}(\widehat{X}; R_{0}),
\]
where $P' = \sigma^{-i} (P)$. Since $C_{\bullet}(\widehat{X}; R_{0})$ is a chain complex of free $R$-modules with a basis corresponding to the cells of $X$, we infer $\dim_{\kappa (P')} H_{j} (X; h_{i}^{*} \kappa (P))$ is less than or equal to the number of the $j$-cells of $X$. Note that $\sigma^{i}$ induces a field isomorphism $\kappa (P') \rightarrow \kappa (P)$, and (\ref{lem_dimension_bound_1}) is actually a $\sigma^{i}$-twisted isomorphism. Therefore,
\[
\dim_{\kappa (P)} H_{j} (X_{k^{i}}; \kappa (P)) = \dim_{\kappa (P')} H_{j} (X; h_{i}^{*} \kappa (P)),
\]
which implies the conclusion.
\end{proof}

\begin{lemma}\label{lem_vector_space}
The homology group $H_{j} (X_{\infty}; \kappa (P))$ is a finite-dimensional $\kappa (P)$-vector space for each $j$ and $P \in \mathrm{Spec} (R)$.
\end{lemma}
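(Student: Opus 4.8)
The plan is to exploit what has already been proved about the finite cyclic covers, namely the \emph{uniform} bound $\dim_{\kappa(P)}H_j(X_{k^i};\kappa(P))\le c_j$ for all $i>0$, where $c_j$ is the number of $j$-cells of $X$ (Lemma~\ref{lem_dimension_bound}), together with the fact that $H_j(X_\infty;\kappa(P))$ is finitely generated over the principal ideal domain $\kappa(P)[t,t^{-1}]$. The idea is to compare $X_\infty$ with its own finite cyclic quotients $X_{k^i}=X_\infty/\langle t^{k^i}\rangle$ via a Wang-type exact sequence, and then use the uniform bound to kill any free $\kappa(P)[t,t^{-1}]$-summand.

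First I would record the chain-level identification
\[
C_\bullet(X_{k^i};\kappa(P)) \;\cong\; C_\bullet(X_\infty;\kappa(P)) \otimes_{\kappa(P)[t,t^{-1}]} \kappa(P)[t,t^{-1}]/(t^{k^i}-1).
\]
Indeed $X_{k^i}$ is the quotient of $X_\infty$ by the free $\mathbb Z$-action generated by the deck transformation $t^{k^i}$, and since the coefficient module $\kappa(P)$ is pulled back through $\varphi$ from $G$, the subgroup $\langle t^{k^i}\rangle$ acts trivially on it; hence the cellular chains of the quotient are the $\langle t^{k^i}\rangle$-coinvariants, i.e.\ the reduction modulo $t^{k^i}-1$. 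Because $X$ is of finite type, $C_\bullet(X_\infty;\kappa(P))$ is a complex of free $\kappa(P)[t,t^{-1}]$-modules of finite rank, and $t^{k^i}-1$ is a non-zero-divisor in the domain $\kappa(P)[t,t^{-1}]$, so multiplication by $t^{k^i}-1$ is injective in each degree. This gives a short exact sequence of complexes
\[
0 \to C_\bullet(X_\infty;\kappa(P)) \xrightarrow{\,t^{k^i}-1\,} C_\bullet(X_\infty;\kappa(P)) \to C_\bullet(X_{k^i};\kappa(P)) \to 0 ,
\]
hence a long exact (Wang) sequence
\[
\cdots \to H_j(X_\infty;\kappa(P)) \xrightarrow{\,t^{k^i}-1\,} H_j(X_\infty;\kappa(P)) \to H_j(X_{k^i};\kappa(P)) \to H_{j-1}(X_\infty;\kappa(P)) \to \cdots .
\]

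From exactness, $\operatorname{coker}\!\bigl(t^{k^i}-1\colon H_j(X_\infty;\kappa(P))\to H_j(X_\infty;\kappa(P))\bigr)$ injects into $H_j(X_{k^i};\kappa(P))$, so its $\kappa(P)$-dimension is at most $c_j$ for every $i>0$ by Lemma~\ref{lem_dimension_bound}. On the other hand $M:=H_j(X_\infty;\kappa(P))$ is finitely generated over the PID $\kappa(P)[t,t^{-1}]$ (it is a subquotient of the finite-rank free modules $C_j(X_\infty;\kappa(P))$, and $\kappa(P)[t,t^{-1}]$ is Noetherian), so $M\cong \kappa(P)[t,t^{-1}]^{a}\oplus T$ with $T$ torsion of finite $\kappa(P)$-dimension. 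Since $t^{k^i}-1$ respects this decomposition, $\dim_{\kappa(P)}\operatorname{coker}(t^{k^i}-1\text{ on }M)=a\,k^i+\dim_{\kappa(P)}\operatorname{coker}(t^{k^i}-1\text{ on }T)\ge a\,k^i$. Combining with $a\,k^i\le c_j$ for all $i$ and $k>1$ forces $a=0$; thus $M$ is torsion over $\kappa(P)[t,t^{-1}]$ and hence finite-dimensional over $\kappa(P)$.

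The only genuinely delicate point is the first step: making precise that passing from $X_\infty$ to $X_{k^i}$ corresponds on cellular chains with the pulled-back local system to the purely algebraic operation of reducing modulo $t^{k^i}-1$; after that everything is formal homological algebra over a PID plus the uniform bound. I would be careful to check the coefficient bookkeeping (that $\varphi$ factors through $G$, so $\langle t^{k^i}\rangle$ acts trivially on $\kappa(P)$) and that Lemma~\ref{lem_dimension_bound} indeed supplies the \emph{same} constant $c_j$ for all powers $k^i$, which it does.
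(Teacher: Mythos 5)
Your proposal is correct and takes essentially the same route as the paper: the Wang short exact sequence for $X_\infty \to X_q$ with coefficients $\kappa(P)$, finiteness of $H_j(X_\infty;\kappa(P))$ over the PID $\kappa(P)[t,t^{-1}]$, and the uniform dimension bound of Lemma~\ref{lem_dimension_bound} to rule out a free summand. The only cosmetic difference is that you spell out the structure-theorem decomposition $M\cong\kappa(P)[t,t^{-1}]^a\oplus T$ and compute $\dim_{\kappa(P)}\operatorname{coker}(t^{k^i}-1)$ explicitly, whereas the paper just notes that a nonzero free part gives $\dim H_j(X_q;\kappa(P))\ge q$.
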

\begin{proof}
The chain complex $C_{\bullet} (X_{\infty}; \kappa (P)) = \kappa (P) \otimes_{R} C_{\bullet} (X_{\infty}; R)$ is a chain complex of free modules over $\kappa (P) \otimes_{R} R[t, t^{-1}] = \kappa (P) [t, t^{-1}]$ with a basis corresponding to the cells of $X$. As the localization of the principal ideal domain (PID) $\kappa (P) [t]$ at $t$, the ring $\kappa (P) [t, t^{-1}]$ is also a PID (see also the third paragraph in \cite[p.~117]{Milnor68}). Since $X$ is of finite type, $H_{j} (X_{\infty}; \kappa (P))$ is a finitely generated $\kappa (P) [t, t^{-1}]$-module. By the structure theorem for finitely generated modules over a PID, it suffices to show $H_{j} (X_{\infty}; \kappa (P))$ is a $\kappa (P) [t, t^{-1}]$-torsion module.

Associated to the covering map $X_{\infty} \rightarrow X_{q}$, there is a short exact sequence of chain complexes
\[
0 \rightarrow C_{\bullet} (X_{\infty}; \kappa (P)) \overset{\times (t^{q}-1)}{\longrightarrow} C_{\bullet} (X_{\infty}; \kappa (P)) \rightarrow C_{\bullet} (X_{q}; \kappa (P)) \rightarrow 0
\]
which induces the long exact sequence of homology groups
\begin{equation}\label{eqn_homology}
H_{j} (X_{\infty}; \kappa (P)) \overset{\times (t^{q}-1)}{\longrightarrow} H_{j} (X_{\infty}; \kappa (P)) \rightarrow H_{j} (X_{q}; \kappa (P)) \overset{\partial}{\rightarrow} H_{j-1} (X_{\infty}; \kappa (P)),
\end{equation}
in which all maps are $R$-module homomorphisms and the maps $\times (t^q-1)$ are $R[t, t^{-1}]$-module homomorphisms.

By (\ref{eqn_homology}), we obtain the exact sequence
\[
0 \rightarrow H_{j} (X_{\infty}; \kappa (P))/ (t^{q} -1) \rightarrow H_{j} (X_{q}; \kappa (P)).
\]
Suppose $H_{j} (X_{\infty}; \kappa (P))$ is not a $\kappa (P) [t, t^{-1}]$-torsion module, or equivalently, the free part of $H_{j} (X_{\infty}; \kappa (P))$ is nonzero. Then
\[
\dim_{\kappa (P)} H_{j} (X_{q}; \kappa (P)) \geq \dim_{\kappa (P)} (H_{j} (X_{\infty}; \kappa (P))/(t^{q}-1)) \geq q.
\]
Thus $\dim_{\kappa (P)} H_{j} (X_{k^{i}}; \kappa (P))$ is unbounded when $i$ tends to infinity, which contradicts Lemma \ref{lem_dimension_bound}.
\end{proof}

We are ready to finish the proof of Theorem \ref{thm_cw}.
\begin{proof}[Proof of Theorem \ref{thm_cw}]
By Proposition \ref{prop_finite}, $H_{j} (X_{\infty}; R)$ is a finitely generated $R[t, t^{-1}]$-module. We have to prove that it is a finitely generated $R$-module. The argument is an induction on $j$. We shall apply Theorem \ref{thm_noether} by taking $S$ and $\mathfrak{M}$ to be $R[t, t^{-1}]$ and $H_{j} (X_{\infty}; R)$ respectively. We may assume $H_{j} (X_{\infty}; R)$ is nonzero.

For each $P \in \mathrm{Spec} (R)$, since
\[
C_{\bullet} (X_{\infty}; \kappa (P)) = \kappa (P) \otimes_{R} C_{\bullet} (X_{\infty}; R),
\]
we may compare $\kappa (P) \otimes_{R} H_{\bullet} (X_{\infty}; R)$ with $H_{\bullet} (X_{\infty}; \kappa (P))$ by the Homological Universal Coefficient Spectral Sequence (\cite[Theorem~5.5.1]{Godement}). More precisely, choose an $R$-projective resolution of $\kappa (P)$:
\[
\kappa (P) \leftarrow \mathfrak{M}_{0} \leftarrow \mathfrak{M}_{1} \leftarrow \cdots.
\]
Let the double complex $D_{\bullet, \bullet}$ be the tensor product of $\mathfrak{M}_\bullet$ and $C_{\bullet} (X_{\infty}; R)$. In particular, $D_{i, j}=\mathfrak{M}_{i} \otimes_{R} C_{j} (X_{\infty}; R)$. This spectral sequence converges to $H_{\bullet} (X_{\infty}; \kappa (P))$ with $E^{2}$-terms
\[
E^{2}_{i,j} = \mathrm{Tor}_{i} (\kappa (P), H_{j} (X_{\infty}; R)).
\]
In particular, $E^{2}_{0,j} = \kappa (P) \otimes_{R} H_{j} (X_{\infty}; R)$.

For $j=0$ and each $P \in \mathrm{Spec} (R)$, we have
\[
\kappa (P) \otimes_{R} H_{0} (X_{\infty}; R) = E^{2}_{0,0} = E^{\infty}_{0,0} = H_{0} (X_{\infty}; \kappa (P)).
\]
By Lemma \ref{lem_vector_space}, $\kappa (P) \otimes_{R} H_{0} (X_{\infty}; R)$ is finite-dimensional over $\kappa (P)$. By Lemma \ref{lem_eigenvalue}, the eigenvalues of the $t$ and $t^{-1}$ actions on $\kappa (P) \otimes_{R} H_{0} (X_{\infty}; R)$ are integral over $R/P$ if we further have $P \in \mathrm{Ass}_{R} (H_{0} (X_{\infty}; R))$. Thus $H_{0} (X_{\infty}; R)$ is a finitely generated $R$-module by the implication $(2) \Rightarrow (3)$ in Theorem \ref{thm_noether}. We obtain the conclusion for $j=0$.

As inductive hypothesis, we assume $H_{m} (X_{\infty}; R)$ are finitely generated $R$-modules for all $m<j$. By the Noetherianity of $R$, the  $\kappa (P)$-vector spaces $E^{2}_{i,m}$ are finite-dimensional for all $m<j$ and $i \geq 0$. Clearly, the differentials in the spectral sequence are $R$-module homomorphisms and hence $\kappa (P)$-homomorphism. Therefore, $E^{r}_{i,m}$ are finite-dimensional $\kappa (P)$-vector spaces for all $r \geq 2$, $m<j$,  and $i \geq 0$. Since
\[
E^{j+2}_{0,j} = E^{\infty}_{0,j} \subseteq H_{j} (X_{\infty}; \kappa (P)),
\]
by Lemma \ref{lem_vector_space} again, $E^{j+2}_{0,j}$ is finite-dimensional. We have the following exact sequence
\[
E^{r}_{r, j-r+1} \overset{\mathrm{d}_{r}}{\rightarrow} E^{r}_{0,j} \rightarrow E^{r+1}_{0,j} \rightarrow 0
\]
for all $r \geq 2$, where $\mathrm{d}_{r}$ is a differential. Thus $E^{2}_{0,j} = \kappa (P) \otimes_{R} H_{j} (X_{\infty}; R)$ is also finite-dimensional by a decreasing induction on $r$. Applying Lemma \ref{lem_eigenvalue} and Theorem \ref{thm_noether} again, $H_{j} (X_{\infty};R)$ is a finitely generated $R$-module. The inductive argument is completed.
\end{proof}

\section{CW Complex: Homotopy Finiteness and Homological Periodicity}\label{sec_cw_homotopy}
We prove Theorem \ref{thm_cw_homotopy} and a homological periodicity theorem (Theorem \ref{thm_cw_period}) in this section.

\begin{proof}[Proof of Theorem \ref{thm_cw_homotopy}]
If $X$ is homotopy equivalent to a CW complex of finite type, without loss of generality, we may  assume $X$ itself is of finite type. Taking $R_{0} = \mathbb{Z}$ and $\varphi = \mathrm{id}$ in Theorem \ref{thm_cw}, we see that
\[
H_{j} (\widetilde{X}; \mathbb{Z}) = H_{j} (X_{\infty}; \mathbb{Z}[G])
\]
is a finitely generated  $\mathbb{Z}[G]$-module for each $j$. Here $\widetilde{X}$ is the universal cover of $X$. Since $G$ is finitely presented and $\mathbb{Z}[G]$ is a Noetherian ring, by Wall's Theorems A and B in \cite{Wall1}, we infer $X_{\infty}$ is homotopy equivalent to a CW complex of finite type.

If $X$ is finitely dominated, by Wall's Theorems A and F in \cite{Wall1}, $X$ is homotopy equivalent to a CW complex of finite type. By the above arguments, so is $X_{\infty}$. Clearly, $X$ satisfies the condition $\mathrm{D}_{n}$ in Wall's Theorem F for some $n$, hence so does $X_{\infty}$. By Wall's Theorems A and F again, $X_{\infty}$ is finitely dominated.
\end{proof}

In Theorems \ref{thm_cw_homotopy} and \ref{thm_cw}, the action of deck transformation on the homology groups of $X_{\infty}$ is actually periodic. This follows from the theorem below.

\begin{theorem}\label{thm_cw_period}
Let $X$ be a CW complex of finite type, $\varphi \colon \pi_{1}(X) \to \mathbb Z$ be an epimorphism, $X_{k}$ be the finite cyclic cover of $X$ with $\pi_{1}(X_{k}) = \varphi^{-1}(k \mathbb Z)$, and $X_{\infty}$ be the infinite cyclic cover with $\pi_{1}(X_{\infty})=\ker \varphi$. Let $d \colon X_{\infty} \to X_{\infty}$ be the deck transformation corresponding to $1 \in \mathbb Z$. Assume there is a homotopy equivalence $h \colon X \to X_{k}$ for some $k >1$ such that the induce isomorphism $h_{\sharp} \colon \pi_{1}(X) \to \pi_{1}(X_{k})$ satisfies $h_{\sharp}(\ker \varphi)= \ker \varphi$.

Then, for each degree $j$, the following statements hold.
\begin{enumerate}
\item The homology group $H_{j} (X_{\infty}; \mathbb{Z})$ is a finitely generated abelian group.

\item There exist positive integers $m$ prime to $k$, and $l$ such that
\[
d_{*}^{m} =\id: H_{j} (X_{\infty}; \mathbb{Z}) /T \rightarrow H_{j} (X_{\infty}; \mathbb{Z}) /T \quad \text{and} \quad d_{*}^{l} = \id: H_{j} (X_{\infty}; \mathbb{Z})\to H_{j} (X_{\infty}; \mathbb{Z}),
\]
where $T$ is the torsion subgroup of $H_{j} (X_{\infty}; \mathbb{Z})$.
\end{enumerate}
\end{theorem}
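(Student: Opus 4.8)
The plan is to obtain part (1) as an immediate consequence of Theorem \ref{thm_cw}, and to derive part (2) from a purely algebraic statement about automorphisms of finitely generated abelian groups. For part (1), apply Theorem \ref{thm_cw} with $R_0=\mathbb Z$ and $G=0$, so that $R=R_0[G]=\mathbb Z$ and $\varphi\colon\pi_1(X)\to 0\times\mathbb Z=\mathbb Z$; when $G=0$ both conditions in (\ref{eqn_assumption1}) reduce to $h_\sharp(\ker\varphi)=\ker\varphi$, which is exactly the present hypothesis, and $X$ is of finite type, so $H_j(X_\infty;\mathbb Z)=H_j(X_\infty;R)$ is a finitely generated $\mathbb Z$-module, i.e.\ a finitely generated abelian group.

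For part (2), set $H:=H_j(X_\infty;\mathbb Z)$, let $T\subseteq H$ be its torsion subgroup (so $H/T\cong\mathbb Z^N$), and let $d_*\in\mathrm{Aut}(H)$ be induced by the deck transformation $d$. Lifting $h$ to $X_\infty$ produces a homotopy equivalence $\overline h\colon X_\infty\to X_\infty$, and the $G=0$ case of (\ref{eqn_conjugate}) reads $\overline h_*\circ d_*=d_*^{\pm k}\circ\overline h_*$ on $H$; replacing $\overline h_*$ by $\overline h_*^{2}$ gives $\phi\in\mathrm{Aut}(H)$ with $\phi\,d_*\,\phi^{-1}=d_*^{\kappa}$, where $\kappa:=k^2\ge 2$. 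Since $\gcd(m,k)=1$ iff $\gcd(m,\kappa)=1$, it then suffices to prove: \emph{if $A,B\in\mathrm{Aut}(H)$ satisfy $BAB^{-1}=A^{\kappa}$ with $\kappa\ge 2$, then $A$ has finite order, and the order of the automorphism it induces on $H/T$ is prime to $\kappa$.}

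To prove this, I would first pass to $H/T$ and view $A,B\in\GL_N(\mathbb Z)$; iterating the relation gives $B^nAB^{-n}=A^{\kappa^n}$, so $A$ is $\GL_N(\mathbb Z)$-conjugate to $A^{\kappa^n}$ for all $n$. Comparing characteristic polynomials, the eigenvalue multiset of $A$ is stable under $\lambda\mapsto\lambda^\kappa$; being finite and made of nonzero algebraic numbers, each eigenvalue is a root of unity, of some order $e$ with $e\mid\kappa^a-1$ for suitable $a$ (this is the point behind Remark \ref{rmk_eigenvalue}), hence $\gcd(e,\kappa)=1$. Let $L$ be the least common multiple of these orders; then $\gcd(L,\kappa)=1$ and $A^L=I+N$ with $N\in M_N(\mathbb Z)$ nilpotent, and it remains to show $N=0$. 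Fix a prime $p\mid\kappa$; reducing $A^L=I+N$ modulo $p$ and writing $\kappa^n=p^{b_n}m_n$ with $p\nmid m_n$, the Frobenius on the commutative ring $\mathbb F_p[\overline N]$ gives $(I+\overline N)^{\kappa^n}=\big((I+\overline N)^{m_n}\big)^{p^{b_n}}=I+\overline N_n^{p^{b_n}}$ for some nilpotent $\overline N_n$, which is $I$ as soon as $p^{b_n}\ge N$ (true for large $n$, as $b_n\ge n$); since $(A^L)^{\kappa^n}=B^nA^LB^{-n}$, this forces $A^L\equiv I\pmod p$, i.e.\ $p\mid N$. By induction: if $A^L=I+p^jN_j$ with $N_j$ integral and $j\ge 1$, expanding $BA^LB^{-1}=(A^L)^{\kappa}$ and dividing by $p^j$ yields $BN_jB^{-1}\equiv\kappa N_j\equiv 0\pmod p$, so $p\mid N_j$; hence $p^j\mid N$ for every $j$ and $N=0$. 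Thus $A^L=I$, so $A$ has order dividing $L$ on $H/T$, with $\gcd(L,\kappa)=1$. For the full group $H$: $\mathrm{Aut}(T)$ is finite, so $A|_T$ has finite order $l_1$; then $A^{Ll_1}$ is the identity on $T$ and on $H/T$, so $A^{Ll_1}=\mathrm{id}+\psi$ with $\psi\in\mathrm{Hom}(H/T,T)$ (finite, and $\psi^2=0$), whence $A^{Ll_1s}=\mathrm{id}$ for $s$ the order of $\psi$. Applying this with $A=d_*$, $B=\phi$, and taking $m=L$ and $l=Ll_1s$, gives part (2).

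The step I expect to be the genuine obstacle is showing $N=0$, i.e.\ that $A$ is semisimple: over $\mathbb Q$ the relation $BAB^{-1}=A^\kappa$ does \emph{not} force this (a single unipotent Jordan block is $\mathbb Q$-conjugate to its $\kappa$-th power), so one must use that $B$ has integer entries. The $p$-adic divisibility argument above — exploiting that $A^L$ reduces modulo a prime $p\mid\kappa$ to a matrix annihilated by a high enough $p$-th power — is exactly where the integrality of $B$ is used.
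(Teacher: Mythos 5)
Your proposal is correct, and the overall strategy matches the paper's: part (1) via Theorem \ref{thm_cw} with $R_{0}=\mathbb{Z}$, $G=0$; part (2) by reducing, through the conjugation relation $\overline{h}_{*}d_{*}\overline{h}_{*}^{-1}=d_{*}^{\pm k}$ and squaring, to the algebraic statement that $BAB^{-1}=A^{\kappa}$ with $\kappa\ge 2$ in $\mathrm{GL}_{N}(\mathbb{Z})$ forces $A$ to have finite order prime to $\kappa$; then lift from $H/T$ to $H$ using that $\mathrm{Aut}(T)$ is finite and that $A^{Ll_{1}}-\mathrm{id}$ factors through the finite group $\mathrm{Hom}(H/T,T)$ (this is the paper's Lemma \ref{lem_period} / Lemma \ref{lem_matrix} argument). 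The eigenvalue step is also the same as the paper's Proposition \ref{prop_matrix}: the eigenvalue multiset is stable under $\lambda\mapsto\lambda^{\kappa}$, so eigenvalues are roots of unity whose orders generate a finite cyclic group on which $z\mapsto z^{\kappa}$ is an automorphism, hence the order is prime to $\kappa$.

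The one place you diverge from the paper is the proof that the nilpotent part $N=A^{L}-I$ vanishes. The paper argues directly: if $N^{u}=0$ then for $k^{v}\ge u$ one has $D^{k^{v}}-1=\sum_{i=1}^{u-1}\binom{k^{v}}{i}N^{i}$, and $k^{r}\mid\binom{k^{v}}{i}$ for all $1\le i\le u-1$ once $v$ is large; since $N=B^{v}(D^{k^{v}}-1)B^{-v}$ and $B^{\pm v}$ are integral, $k^{r}\mid N$ for every $r$, so $N=0$. Your argument instead works one prime $p\mid\kappa$ at a time: a Frobenius computation in $\mathbb{F}_{p}[\overline{N}]$ shows $(A^{L})^{\kappa^{n}}\equiv I\pmod{p}$ for $n\gg 0$, hence $p\mid N$, and then an inductive $p$-adic lifting (expanding $BA^{L}B^{-1}=(A^{L})^{\kappa}$ and dividing by $p^{j}$) promotes $p^{j}\mid N$ to $p^{j+1}\mid N$. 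Both are correct; the paper's is shorter because it tracks divisibility by powers of $k$ in one shot, while yours makes the role of a single prime $p\mid\kappa$ and of the integrality of $B$ more visible (as you note, this is exactly where the argument goes beyond what holds over $\mathbb{Q}$). Neither buys significantly more generality, so this is best viewed as an alternative proof of the paper's Proposition \ref{prop_matrix} rather than a different route through the theorem.
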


\begin{proof}
(1). This follows from Theorem \ref{thm_cw} with $R_{0} = \mathbb{Z}$ and $G=0$.

(2). As in the proof of Theorem \ref{thm_cw}, let $\overline{h}: X_{\infty} \rightarrow X_{\infty}$ be the homotopy equivalence induced from $h$. Then $\overline{h}_{*}^{-1} d_{*}^{k} \overline{h}_{*} = d_{*}^{\pm 1}$ on $H_{j} (X_{\infty}; \mathbb{Z})$. (Here $\overline{h}_{*}$ and $d_{*}$ are $\mathbb{Z}$-module isomorphisms, compare Remark \ref{rmk_eigenvalue}.) The conclusion follows from the algebraic results Proposition \ref{prop_matrix} and Lemma \ref{lem_matrix} below.
\end{proof}

\begin{proposition}\label{prop_matrix}
Suppose $A$ and $B$ are in $\mathrm{GL}(n; \mathbb{Z})$ such that $B A^{k} B^{-1} = A^{\pm 1}$ for some $k>1$. Then  $A^{m} =1$ for some positive $m$ prime to $k$.
\end{proposition}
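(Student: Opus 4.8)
The plan is to prove first that every complex eigenvalue of $A$ is a root of unity whose order is prime to $k$, and then to show that the resulting unipotent power of $A$ is forced to be the identity by an integrality (Smith normal form) argument.

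First I would normalize the hypothesis. If $BA^{k}B^{-1}=A$, set $\nu=k$ and $C=B$; if $BA^{k}B^{-1}=A^{-1}$, then squaring the conjugation gives $B^{2}A^{k^{2}}B^{-2}=B(A^{-1})^{k}B^{-1}=A$, so set $\nu=k^{2}$ and $C=B^{2}$. In either case $\nu\ge 2$ is a power of $k$, $C\in\mathrm{GL}(n;\mathbb{Z})$, and $CA^{\nu}C^{-1}=A$. Since conjugate matrices have the same characteristic polynomial, the multiset of eigenvalues of $A$ equals that of $A^{\nu}$, hence is invariant under $\lambda\mapsto\lambda^{\nu}$; as $A$ is invertible, $0$ is not an eigenvalue, and comparing multiplicities shows $\lambda\mapsto\lambda^{\nu}$ is a bijection of the (finite) set of distinct eigenvalues. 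Some power of this permutation is the identity, so there is $s\ge 1$ with $\lambda^{\nu^{s}}=\lambda$, i.e. $\lambda^{N}=1$ for $N:=\nu^{s}-1$, for every eigenvalue $\lambda$. (This is the degenerate case $R=\mathbb{Z}$, $\sigma=\mathrm{id}$, $u=1$ of Proposition \ref{prop_eigenvalue}, but here I need the sharper conclusion that $\lambda$ is a root of unity.) Note $\gcd(N,\nu)=1$ and $k\mid\nu$, so $\gcd(N,k)=1$. Consequently $U:=A^{N}$ has all eigenvalues equal to $1$, so $W:=U-I$ is nilpotent, and $CU^{\nu}C^{-1}=U$, i.e. $U$ is $\mathrm{GL}(n;\mathbb{Z})$-conjugate to $U^{\nu}$.

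The core step is the following claim: a unipotent $U\in\mathrm{GL}(n;\mathbb{Z})$ that is $\mathrm{GL}(n;\mathbb{Z})$-conjugate to $U^{r}$ for some $r\ge 2$ must equal $I$. Let $d\ge 1$ be the index of nilpotency of $W=U-I$, and suppose for contradiction $d\ge 2$, so $W^{d-1}\neq 0$. Expanding $U^{r}=(I+W)^{r}$ and using $W^{d}=0$ with all terms commuting, one gets $U^{r}-I=rW+W^{2}Q$ for a polynomial $Q$ in $W$, and therefore $(U^{r}-I)^{d-1}=r^{d-1}W^{d-1}$ (all other terms contain $W^{\ge d}=0$). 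Conjugating $CU C^{-1}=U^{r}$ gives $C W C^{-1}=U^{r}-I$, hence $C\,W^{d-1}\,C^{-1}=r^{d-1}W^{d-1}$: the matrices $W^{d-1}$ and $r^{d-1}W^{d-1}$ are $\mathrm{GL}(n;\mathbb{Z})$-conjugate. Now let $\rho=\operatorname{rank}W^{d-1}\ge 1$ and let $g_{\rho}(\cdot)$ denote the gcd of all $\rho\times\rho$ minors; this determinantal divisor is invariant under conjugation by $\mathrm{GL}(n;\mathbb{Z})$ and satisfies $g_{\rho}(cM)=c^{\rho}g_{\rho}(M)$ for a positive integer $c$. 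Applying this to both sides yields $g_{\rho}(W^{d-1})=r^{(d-1)\rho}\,g_{\rho}(W^{d-1})$ with $g_{\rho}(W^{d-1})$ a positive integer, forcing $r^{(d-1)\rho}=1$ — impossible since $r\ge 2$. Hence $d=1$, $W=0$, $U=I$.

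Applying the claim to $U=A^{N}$ gives $A^{N}=I$, and since $\gcd(N,k)=1$ we may take $m=N$ (or $m=\mathrm{ord}(A)$, a divisor of $N$), which finishes the proof. I expect the only delicate point to be the unipotent claim: isolating the identity $(U^{r}-I)^{d-1}=r^{d-1}W^{d-1}$, and recognizing that conjugation-invariance of determinantal divisors together with their behavior under scalar multiplication is exactly the leverage that rules out a nontrivial nilpotent part; the rest is bookkeeping. (The last step can equivalently be phrased via the product of the nonzero elementary divisors of $W^{d-1}$ in its Smith normal form, which is the same invariant.)
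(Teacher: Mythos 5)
Your proof is correct, and it takes a genuinely different route from the paper for the key step. Both arguments begin the same way: reduce to $CA^{\nu}C^{-1}=A$ with $\nu\ge 2$ a power of $k$ (by squaring in the $A^{-1}$ case), observe that $\lambda\mapsto\lambda^{\nu}$ permutes the finite set of eigenvalues so they are roots of unity whose common order $N$ is prime to $k$, and thereby reduce to showing that the unipotent matrix $A^{N}$ is the identity. Where you diverge is precisely in this unipotent step. The paper iterates the conjugation $v$ times to get $B^{v}(D^{k^{v}}-I)B^{-v}=D-I$, expands $D^{k^{v}}-I$ by the binomial theorem, invokes the $p$-adic divisibility of $\binom{k^{v}}{i}$ for $v$ large, and concludes that every entry of $D-I$ is divisible by arbitrarily high powers of $k$, hence zero. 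You instead isolate a clean self-contained lemma (unipotent $U\in\mathrm{GL}(n;\mathbb{Z})$ conjugate to $U^{r}$, $r\ge 2$, forces $U=I$), deduce $GW^{d-1}G^{-1}=r^{d-1}W^{d-1}$ from a single conjugation and the identity $(U^{r}-I)^{d-1}=r^{d-1}W^{d-1}$, and then get a contradiction from the $\mathrm{GL}(n;\mathbb{Z})$-invariance of the determinantal divisor $g_{\rho}$ together with its homogeneity $g_{\rho}(cM)=c^{\rho}g_{\rho}(M)$. Your route avoids the number-theoretic estimate on binomial coefficients and the unbounded iteration, at the cost of introducing Smith normal form invariants; it also packages the nilpotent step as a statement that could be reused independently. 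Both are sound and of comparable length, but yours is arguably the more conceptual of the two.
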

\begin{proof}
The assumption implies $B^{2} A^{k^{2}} B^{-2} = A$. And $m$ is prime to $k$ if it is prime to $k^{2}$. Therefore by replacing $B$ with $B^{2}$ if necessary, we may assume $B A^{k} B^{-1} = A$.

Let $\lambda$ be an eigenvalue of $A$, then the equation $B A^{k} B^{-1} = A$ implies that $\lambda^{k}$ is an eignvalue of $A$ as well. We claim $\lambda$ is a root of unity. Otherwise, $A$ would have infinitely many distinct eigenvalues $\lambda^{k^{u}}$ for all $u \in \mathbb{N}$, which is impossible. Let $\Lambda (A)$ be the set of the eigenvalues of $A$. Let $H$ be the subgroup of the multiplicative group $\mathbb{C} \setminus \{ 0 \}$ generated by $\Lambda (A)$, then $H$ is finite, say of order $m$. Since $\Lambda (A^{k}) = \Lambda (A)$, the map $z \mapsto z^{k}$ is an automorphism of $H$. Therefore, $m$ is prime to $k$.

Let $D=A^m$, then all eigenvalues of $D$ are $1$. Thus $N=D-1$ is nilpotent. We have to prove $N$ is indeed $0$. Suppose $N^{u} =0$ for some $u>0$. When $k^{v} \geq u$, we have
\[
D^{k^{v}} -1 = (1+N)^{k^{v}} -1 = \sum_{i=1}^{u-1} \frac{k^{v} !}{(k^{v} -i)! i!} N^{i}.
\]
Thus for each $r \in \mathbb{N}$, we have $k^{r}$ divides all entries of $D^{k^{v}} -1$ when $v$ is large enough. On the other hand,
\[
B^{v} \left( D^{k^{v}} -1 \right) B^{-v} = D-1 =N.
\]
So $k^{r}$ divides all entries of $N$ for all $r \in \mathbb{N}$, which implies $N=0$ because $k>1$.
\end{proof}

\begin{lemma}\label{lem_period}
Let $0 \rightarrow G_{1} \rightarrow G_{2} \rightarrow G_{3} \rightarrow 0$ be an exact sequence of abelian groups, where all elements in $G_{1}$ or $G_{3}$ are annihilated by some integer $r >0$. Suppose $A$ is an automorphism of $G_{2}$ which induces the identity on $G_{1}$ and $G_{3}$. Then $A^{r}$ is the identity .
\end{lemma}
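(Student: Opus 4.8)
The plan is to reduce the whole statement to a one-line computation with the ``discrepancy map'' $g \mapsto A(g)-g$. First I would fix the inclusion $G_1 \subseteq G_2$ and the identification $G_3 = G_2/G_1$, and define $\delta\colon G_2 \to G_2$ by $\delta(g) = A(g) - g$. Since $A$ induces the identity on the quotient $G_3 = G_2/G_1$, the image of $A(g)-g$ in $G_2/G_1$ vanishes, so $\delta(g) \in G_1$ for every $g \in G_2$; thus $\delta$ is really a map $G_2 \to G_1$ (its additivity is immediate but will not even be needed).

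Next I would iterate $A$. Because $A$ restricts to the identity on $G_1$ and $\delta(g) \in G_1$, we have $A(\delta(g)) = \delta(g)$, and a trivial induction on $n$ gives $A^n(g) = g + n\,\delta(g)$ for all $n \ge 1$ and all $g \in G_2$. Hence it suffices to show that $r\,\delta(g) = 0$ for every $g$, for then $A^r(g) = g + r\,\delta(g) = g$ and, $g$ being arbitrary, $A^r = \mathrm{id}_{G_2}$.

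Finally I would check $r\,\delta(g) = 0$ according to which of the two groups the hypothesis bounds. If every element of $G_1$ is killed by $r$, this is immediate since $\delta(g) \in G_1$. If instead every element of $G_3$ is killed by $r$, then $rg$ maps to $0$ in $G_3 = G_2/G_1$, so $rg \in G_1$, whence $A(rg) = rg$ because $A$ fixes $G_1$ pointwise; this says exactly $r(A(g)-g) = r\,\delta(g) = 0$. Either way the lemma follows. I do not expect any genuine obstacle here: the only point worth noticing is that a single application of $A$ already pushes the discrepancy $\delta(g)$ into $G_1$, where $A$ acts trivially, so successive discrepancies add up linearly and the exponent $r$ annihilates the total; the rest is bookkeeping. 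This lemma will be used in the proof of Theorem \ref{thm_cw_period}(2): applied with $A$ a suitable power of $d_*$, with $G_1$ the torsion subgroup $T$ of $H_j(X_\infty;\mathbb Z)$ and $G_3 = H_j(X_\infty;\mathbb Z)/T$, it upgrades the periodicity of $d_*$ modulo torsion coming from Proposition \ref{prop_matrix} to honest periodicity of $d_*$ on $H_j(X_\infty;\mathbb Z)$.
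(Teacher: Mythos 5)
Your proof is correct and takes essentially the same route as the paper: you set $\delta = A - \mathrm{id}$ (the paper's $N$), observe that $\delta$ lands in $G_1$ and that $A$ fixes $\delta(g)$, deduce $A^r = \mathrm{id} + r\delta$, and then kill $r\delta$ using the hypothesis on $G_1$ or on $G_3$. The paper phrases the final step as ``$N\colon G_2 \to G_1$ factors through $G_3$,'' which covers both cases at once, but the content is identical to your two-case check.
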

\begin{proof}
Let $N=A-1$. Then $\mathrm{Im} N \subseteq G_{1}$ and $N|_{G_{1}} =0$. Thus $N^{2} =0$ and $A^{r} = 1+rN$. Furthermore, $N: G_{2} \rightarrow G_{1}$ factors through $G_{3}$. The conclusion follows.
\end{proof}

\begin{lemma}\label{lem_matrix}
Let $H$ be a finitely generated abelian group and $A$ an automorphism of $H$. Suppose for some $m>0$, $A^{m} =1$ on $H/T$, where $T$ is the torsion subgroup of $H$. Then for some $l>0$, $A^{l} =1$ on $H$.
\end{lemma}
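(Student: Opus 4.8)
The plan is to reduce the statement to a short nilpotency argument in the spirit of Lemma~\ref{lem_period}, after first passing to a power of $A$ that also acts trivially on the torsion subgroup $T$.

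First I would record that since $H$ is a finitely generated abelian group, its torsion subgroup $T$ is finite. Any automorphism of $H$ sends torsion elements to torsion elements, so $A(T)=T$, and $A$ induces an automorphism $A|_T$ of the finite group $T$. Because $\mathrm{Aut}(T)$ is finite, there is a positive integer $s$ with $(A|_T)^s=\id$; equivalently $A^s$ restricts to the identity on $T$. Now set $B=A^{ms}$. By hypothesis $A^m$, and hence $B$, induces the identity on $H/T$; and $B=(A^s)^m$ restricts to the identity on $T$ by the previous sentence. So $B$ is an automorphism of $H$ inducing the identity on both $T$ and $H/T$.

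Next I would analyze $N:=B-\id\in\enmo(H)$. Since $B$ is the identity modulo $T$, we have $N(H)\subseteq T$; since $B$ is the identity on $T$, we have $N(T)=0$. Hence $N^2=0$, so that $B^l=(\id+N)^l=\id+lN$ for every $l>0$. Letting $e>0$ be the exponent of the finite group $T$ (for instance $e=|T|$), and using that $N$ takes values in $T$, we get $eN=0$, so $B^e=\id$. Therefore $A^{mse}=B^e=\id$, and the lemma holds with $l=mse$.

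The argument is elementary and I do not expect a genuine obstacle; the one point to be careful about is that Lemma~\ref{lem_period} cannot be quoted verbatim, because the quotient $H/T$ need not be annihilated by any integer (it is free of some rank $r\ge 0$). This is precisely why one first upgrades $A^m$ to the power $B=A^{ms}$ acting trivially on $T$ as well, and then runs the $N^2=0$ computation using the \emph{exponent of $T$} rather than an exponent of $H/T$.
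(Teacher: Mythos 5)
Your proof is correct and takes essentially the same route as the paper: pass to $B=A^{ms}$ acting trivially on both $T$ and $H/T$, and then run the $N^2=0$, $B^e=\mathrm{id}+eN=\mathrm{id}$ computation with $e$ the exponent of $T$. The only thing to flag is that your closing remark misreads Lemma~\ref{lem_period}: its hypothesis asks that \emph{either} $G_1$ \emph{or} $G_3$ be annihilated by $r$ (its proof only needs $rN=0$, and $\mathrm{Im}\,N\subseteq G_1$), and here $G_1=T$ is finite, so the lemma does apply verbatim --- which is exactly how the paper concludes after reducing to $A^{ms}$.
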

\begin{proof}
The automorphism $A$ induces an automorphism of $T$. Since $T$ is finite, we have $A^{s} =1$ on $T$ for some $s >0$. Thus $A^{ms} =1$ on both $T$ and $H/T$. The conclusion follows from Lemma \ref{lem_period}.
\end{proof}

\section{Manifold: Fibering Theorems}\label{sec_fiber}
We start this section with the proof of Theorem \ref{thm_4_top}. Then in the main body of this section, we prove Theorem \ref{thm_high_manifold} and Corollaries \ref{cor_period}, \ref{cor_5_top} and \ref{cor_monodromy}.

\begin{proof}[Proof of Theorem \ref{thm_4_top}]
Let $M_{\infty}$ be the universal cover of $M$,  then the homology group $H_{2} (M_{\infty}; \mathbb{Z})$ is a free $\mathbb{Z}[\mathbb{Z}]$-module (see e.g. the first paragraph in \cite[p.\ 744]{Kreck2} or the end of \cite[p.~172]{Freedman_Quinn}). On the other hand, by Theorem \ref{thm_cw}, $H_{2} (M_{\infty}; \mathbb{Z})$ is a finitely generated abelian group. Thus $H_{2} (M_{\infty}; \mathbb{Z}) = 0$. By the Wang sequence
\[
H_2(M_{\infty};\mathbb Z) \to H_2(M_{\infty};\mathbb Z) \to H_2(M;\mathbb Z) \to 0
\]
of the fibration $M_{\infty} \rightarrow M \rightarrow S^1$, we also have $H_{2} (M; \mathbb{Z}) = 0$.

If $M$ is orientable, by \cite[Theorem 10.7A (2)]{Freedman_Quinn}, $M$ is homeomorphic to $S^{1} \times S^{3}$. Since $H_{2} (M_{\infty}; \mathbb{Z}) = 0$, we infer $M_{\infty}$ is spin. If $M$ is non-orientable, by \cite[Theorem 2 (2)]{Wang}, the Kirby-Siebenmann invariant of $M$ is $0$. Then by \cite[Theorem 1 (2)]{Wang}, $M$ is homeomorphic to $S^{1} \widetilde{\times} S^{3}$.
\end{proof}

Now we turn to the proof of Theorem \ref{thm_high_manifold}.

\begin{proof}[Proof of Theorem \ref{thm_high_manifold}]
(1). Let $M_{\infty}$ be the covering of $M$ with $\pi_{1} (M_{\infty}) = G \times 0$, then by Theorem \ref{thm_cw_homotopy}, $M_{\infty}$ is finitely dominated.  Since $\widetilde{K}_{0} (\mathbb{Z}[G]) =0$, the Wall's finiteness obstruction of $M_{\infty}$ vanishes (\cite[Theorem F]{Wall1}), therefore $M_{\infty}$ is homotopy equivalent to a finite CW complex. Now one can either apply Farrell's Fibration Theorem \cite[Theorem 6.4]{Farrell} (when $\dim M \ge 6$) or Cappell's Splitting Theorem to show that $M$ is a fiber bundle over $S^1$. To handle all cases uniformly, we stick to the second approach.

The idea is to interpret the fibering problem as a  splitting problem following \cite[footnote on p.~3]{Weinberger}.
Let $d \colon M_{\infty} \rightarrow M_{\infty}$ be the deck transformation corresponding $1 \in \mathbb{Z}$, $E= M_{\infty} \times I/ \sim$ be the mapping torus, where $I=[0,1]$ and $(x,0) \sim (d(x), 1)$. Then $E$ is a fiber bundle over $S^{1}$ with fiber $M_{\infty}$. There is a homotopy equivalence $\Phi: M \rightarrow E$ with $\Phi_{\sharp} (G \times 0) = \pi_{1} (M_{\infty})$. A homotopy inverse $E \rightarrow M$ is induced by the natural projection $M_{\infty} \times I \rightarrow M_{\infty} \rightarrow M$. In the fiber bundle $M_{\infty} \rightarrow E \rightarrow S^{1}$, all spaces are homotopy equivalent to finite CW complexes, and $E$ satisfies Poincar\'e duality, by \cite[Theorem 1]{Gottlieb}, we further infer $M_{\infty}$ is homotopy equivalent to a finite Poincar\'e complex.

By the fact $\mathrm{Wh} (G \times \mathbb{Z}) =0$ and Cappell's Splitting Theorem \cite[Corollary 6]{Cappell}, $\Phi$ splits along $M_{\infty} \subset E$. In other words, perturbing $\Phi$ homotopically, $\Phi^{-1} (M_{\infty}) = F$ is a CAT submanifold of $M$ and $\Phi: F \rightarrow M_{\infty}$ is a homotopy equivalence. Here an additional remark is needed for $\dim M =5$. We use the fact that every $1$-connected Poincar\'e space of formal dimension $4$ is homotopy equivalent to a closed TOP manifold. This follows from \cite[p.~161,~Theorem~10.1]{Freedman_Quinn} and \cite[p.~103,~Theorem~1.5]{MH}. (Note that \cite{MH} proved the result for manifolds. Its argument can be extended for Poincar\'e spaces thanks to \cite[Theorem~2.4]{Wall67}.)

Cutting $M$ along $F$, we get an $h$-cobordism $(M'; F', F'')$, where $M$ becomes $M'$, and $F$ becomes $F'$ and $F''$. Since $\pi_{1} (F) = G$ and $\mathrm{Wh} (G) =0$, $(M'; F', F'')$ is an $s$-cobordism. Thus $M'$ is isomorphic to $F \times I$. This shows that  $M$ is a fiber bundle over $S^1$ with fiber $F$.

(2). From (1), we see $M_{\infty}$ is isomorphic to $F \times \mathbb{R}$, and the deck transformation $d$ has the form
$d \colon  F \times \mathbb{R}  \rightarrow  F \times \mathbb{R}$, $
(x,t)  \mapsto  (f(x), t+1)$,
where $f$ is the monodromy of the fiber bundle. The homotopy equivalence $h: M \rightarrow M_{k}$ induces a homotopy equivalence $\overline{h}: M_{\infty} \rightarrow M_{\infty}$ such that $d^{k} \overline{h} = \overline{h} d^{\pm 1}$, where $d^{\pm 1} = d$ (resp.\ $d^{-1}$) if $h$ preserves (resp.\ reverses) the orientation of deck transformations. Let $i \colon F \hookrightarrow F \times \mathbb{R}$, $i (x) = (x,0)$ be the inclusion, $p_{1} \colon F \times \mathbb{R} \rightarrow F$ be the projection. Define $g = p_{1} \circ  \overline{h} \circ i \colon F \rightarrow F$. It's easy to see that $g$ satisfies the conclusion of (2).

(3). Both $M$ and $M_k$ are fiber bundles over $S^1$ with fiber $F$. Fix a fiber $F$ in each manifold. Since $h \colon M \to M_k$ is an isomorphism,  by Proposition \ref{prop_fiber} below, there is an automorphism $\phi$ of $M$ such that $\phi (F) = h^{-1} (F)$ and $\phi$ is pseudo-isotopic to the identity. Then $h \phi: M \rightarrow M_{k}$ is an isomorphism such that $h \phi (F) = F$. Since $\phi \simeq \mathrm{id}$, $h \phi$ preserves the orientation of deck transformation if and only if so does $h$. Cutting $M$ and $M_{k}$ along $F$, we obtain a CAT pseudo-isotopy $\psi: F \times I \rightarrow F \times I$ such that $f^{k} \psi_{0} = \psi_{1} f^{\pm 1}$, where $f^{\pm 1} = f$ (resp.\ $f^{-1}$) if $h$ preserves (resp.\ reverses) the orientation of deck transformations. Setting $g= \psi_{0}$, then $g$ is an automorphism of $F$ and
\[
f^{k} g = \psi_{1} \psi_{0}^{-1} g f^{\pm 1}.
\]
Clearly, $\psi_{1} \psi_{0}^{-1}$ is CAT pseudo-isotopic to $\mathrm{id}_{F}$. The proof is completed.
\end{proof}

\begin{remark}\label{rmk_fiber_extension}
In  the setting of Theorem \ref{thm_high_manifold}, if $G$ is more generally a finitely generated abelian group with $\widetilde{K}_{0} (\mathbb{Z}[G])=0$ and $\mathrm{Wh} (G)=0$, and $\dim M \geq 6$,  then one may obtain a ``virtual fibering theorem": there exists a $q_{0} >0$ such that $M_{q}$ is a fiber bundle over $S^1$ for all $q \geq q_{0}$. There do exist such $G$ other than $\mathbb{Z}^{r}$, for example $\mathbb{Z}/2$ and $\mathbb{Z}/3$. Here is the argument. By \cite[Corollary 6]{Cappell}, the splitting obstruction of $\Phi: M \rightarrow E$ is the image of $\tau (\Phi)$ in $\mathrm{Wh} (G \times \mathbb{Z}) / \mathrm{Wh} (G)$, where $\tau (\Phi)$ is Whitehead torsion of $\Phi$, and $(E, M_{\infty})$ is replaced homotopically with a finite CW pair. Let $[\tau (\Phi)]$ denote the image of $\tau (\Phi)$.
By the Bass-Heller-Swan decomposition (see the next section for a detailed discussion),
\[
\mathrm{Wh} (G \times \mathbb{Z}) / \mathrm{Wh} (G) = \widetilde{K}_{0} (\mathbb{Z}[G]) \oplus \mathrm{Nil} (\mathbb{Z}[G]) \oplus \mathrm{Nil} (\mathbb{Z}[G]) = \mathrm{Nil} (\mathbb{Z}[G]) \oplus \mathrm{Nil} (\mathbb{Z}[G]).
\]
The map $\Phi$ lifts to a homotopy equivalence $\Phi_{q}: M_{q} \rightarrow E_{q}$, and $[\tau (\Phi_{q})]$ is the transfer of $[\tau (\Phi)]$. By \cite[Theorem 6.11, step 3]{Farrell_Su}, $[\tau (\Phi_{q})]$ vanishes when $q$ is large enough. Thus $\Phi_{q}$ splits along $M_{\infty}$. The rest is the same as the proof of Theorem \ref{thm_high_manifold}. The properties (2) and (3) in Theorem \ref{thm_high_manifold} now also hold for $M_{q}$.
\end{remark}

\begin{proposition}\label{prop_fiber}
Let $N_{1}$ and $N_{2}$ be CAT bundles over $S^{1}$ with connected fibers, and let $F_{i} \subset N_{i}$ be a fiber. Suppose $\pi_{1} (N_{i}) = G \times \mathbb{Z}$ with $\pi_{1} (F_{i}) = G \times 0$, where $G$ is a free abelian group of finite rank. Suppose $h: N_{1} \rightarrow N_{2}$ is an isomorphism such that $h_{\sharp} (G \times 0) = G \times 0$, where $h_{\sharp}$ is the induced isomorphism $\pi_{1} (N_{1}) \rightarrow \pi_{1} (N_{2})$. If $\dim N_{i} \geq 6$, then there exists a CAT pseudo-isotopy $C: N_{1} \times I \rightarrow N_{1} \times I$ such that $C_{0} = \mathrm{id}_{N_{1}}$ and $C_{1} (F_{1}) = h^{-1} (F_{2})$, where $I= [0,1]$. If $\dim N_{i} = 5$, then the conclusion also holds for TOP.
\end{proposition}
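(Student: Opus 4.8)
Put $F_1' := h^{-1}(F_2)$. As $h$ is a CAT isomorphism, composing the bundle projection $N_2 \to S^1$ with $h$ produces a second CAT bundle projection $p'\colon N_1 \to S^1$ with fiber $F_1'$; in particular $F_1'$ is bicollared, and $N_1$ cut open along $F_1'$ is a product $F_1' \times I$, just as $N_1$ cut open along the original fiber $F_1$ is $F_1 \times I$ (both are literal mapping tori, so no Whitehead obstruction enters). The assumption $h_\sharp(G \times 0) = G \times 0$ forces the homomorphisms $\pi_1(N_1) \to \mathbb Z$ induced by the two projections to have the same kernel $G \times 0$, hence to agree up to sign; since $S^1 = K(\mathbb Z, 1)$, after possibly replacing $p'$ by its composite with complex conjugation of $S^1$---which leaves $F_1'$ unchanged---we may assume the two projections are homotopic. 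The task is thereby reduced to the classical uniqueness of the fiber of a fibration over the circle (in the spirit of \cite{Farrell}, \cite{Siebenmann}): fibers of two homotopic CAT fibrations of $N_1$ over $S^1$ are ambiently CAT pseudo-isotopic.

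I would prove this by passing to the infinite cyclic cover $q\colon \widehat N_1 \to N_1$ associated to $\ker(\pi_1(N_1) \to \mathbb Z) = G \times 0$. The two fibrations give two $d$-invariant product structures $\widehat N_1 \cong F_1 \times \mathbb R$ and $\widehat N_1 \cong F_1' \times \mathbb R$, where $d$ generates the deck group; moreover $q$ carries the slices $F_1 \times \{0\}$ and $F_1' \times \{0\}$ homeomorphically onto $F_1$ and $F_1'$, and $q^{-1}(F_1') = F_1' \times \mathbb Z$. So uniqueness of the fiber amounts to a $d$-equivariant ambient pseudo-isotopy of $\widehat N_1$ moving $F_1 \times \{0\}$ onto $F_1' \times \{0\}$. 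Since $F_1 \times \{0\}$ is compact it lies between $F_1' \times \{-m\}$ and $F_1' \times \{m\}$ for some $m$; the piece of $\widehat N_1$ between $F_1 \times \{0\}$ and $F_1' \times \{m\}$ is, by the collar structure of the two product decompositions, an $h$-cobordism from $F_1 \cong F_1 \times \{0\}$ to $F_1' \cong F_1' \times \{m\}$, with Whitehead torsion in $\mathrm{Wh}(G) = 0$ because $G = \mathbb Z^r$, hence a CAT product by the $s$-cobordism theorem. Sliding $F_1 \times \{0\}$ across this product onto $F_1' \times \{m\}$, and then down the genuine product $F_1' \times [0,m]$ onto $F_1' \times \{0\}$, gives the required motion; performing it simultaneously over all $d$-translates entails reconciling the $s$-cobordism product structures on the overlapping translates, which can be arranged at the cost of passing from isotopy to pseudo-isotopy and uses $\mathrm{Wh}(G \times \mathbb Z) = 0$. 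Descending along $q$ yields a pseudo-isotopy $C$ of $N_1$ with $C_0 = \mathrm{id}_{N_1}$ and $C_1(F_1) = F_1' = h^{-1}(F_2)$.

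For $\dim N_1 = 5$ with CAT $=$ TOP the outline is unchanged; the low-dimensional inputs enter exactly where they are needed. The $h$-cobordism above is then $4$-dimensional with fundamental group $G = \mathbb Z^r$, which is good, so one invokes the $5$-dimensional topological $s$-cobordism theorem of Freedman--Quinn \cite{Freedman_Quinn}; and to treat the cut-open pieces as genuine products one uses the topological product structure theorem of Kirby--Siebenmann, with the $4$-dimensional input again supplied by \cite{Freedman_Quinn}, exactly as in the $\dim M = 5$ part of the proof of Theorem \ref{thm_high_manifold}.

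The substantive step is the middle one: manufacturing the $d$-equivariant ambient pseudo-isotopy from the two $d$-invariant product structures on $\widehat N_1$, with control near both ends. This is where $\mathrm{Wh}(G) = 0$ trivializes the intermediate $h$-cobordism, where reconciling it over the deck-translates forces the pseudo-isotopy (rather than isotopy) conclusion and consumes $\mathrm{Wh}(G \times \mathbb Z) = 0$, and where, in the $5$-dimensional TOP case, topological surgery is unavoidable; the reduction of the first paragraph and the bookkeeping with deck transformations are, by comparison, routine.
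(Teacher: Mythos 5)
Your proposal takes a genuinely different route from the paper, but the step you yourself flag as ``substantive'' is left as a statement of intent, and I do not see how to complete it along the lines you sketch. The paper works entirely inside $N_1 \times I$: following \cite[Lemma~4]{Wall65} one produces a bicollared sub-$h$-cobordism $W \subset N_1 \times I$ joining $F_1 \times \{0\}$ to $h^{-1}(F_2) \times \{1\}$, identifies $W \cong F_1 \times I$ via $\mathrm{Wh}(G)=0$ (Freedman--Quinn when $\dim N_1 = 5$), cuts $N_1 \times I$ open along $W$, and shows the resulting manifold $(U;N_1',N_1'')$ is an $s$-cobordism rel the two copies of $W$. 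The hard point there is proving $\pi_1(N_1') \to \pi_1(U)$ is an isomorphism, done with the second van Kampen theorem; once that is in place, $H_*(U,N_1';\mathcal B)=0$ follows by excision and $\mathrm{Wh}(G)=0$ finishes it. The relative $s$-cobordism theorem then produces the pseudo-isotopy in one stroke, and only $\mathrm{Wh}(G)=0$ is ever used.

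You instead lift to $\widehat N_1$ and try to build a $d$-equivariant ambient pseudo-isotopy. Two comments. First, a minor one: that the compact piece $V$ between $F_1 \times \{0\}$ and $F_1' \times \{m\}$ is an $h$-cobordism ``by the collar structure'' is asserted, not proved; it is true (write $F_1 \times [0,\infty) = V \cup_{F_1' \times \{m\}} (F_1' \times [m,\infty))$ and apply excision and van Kampen), but it deserves an argument. Second, and fatally for the plan as written: ``performing it simultaneously over all $d$-translates entails reconciling the $s$-cobordism product structures on the overlapping translates, which can be arranged at the cost of passing from isotopy to pseudo-isotopy and uses $\mathrm{Wh}(G\times\mathbb Z)=0$'' is not an argument. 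The translates $d^j(V)$ overlap one another for $|j|\le m$, so a product structure chosen on $V$ cannot be transported to a coherent equivariant family; the patching region is all of $\widehat N_1$, and producing a coherent equivariant family is precisely what the proposition asks for. You have not set up an obstruction that demonstrably lives in $\mathrm{Wh}(G\times\mathbb Z)$, and the paper's proof never invokes $\mathrm{Wh}(G\times\mathbb Z)=0$ at all. The paper sidesteps equivariance entirely by doing the $s$-cobordism argument once, downstairs, in $N_1 \times I$ rel $W' \cup W''$, and that boundary condition automatically encodes exactly the compatibility you are trying to impose by hand. Your initial reduction (the two projections to $S^1$ are homotopic after possibly conjugating) and your algebraic inputs are correct, but the geometric core --- actually manufacturing the ambient pseudo-isotopy --- is the whole problem and the plan does not supply a proof of it.
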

\begin{proof}
Since $h$ is an isomorphism with $h_{\sharp} (G \times 0) = G \times 0$, following the construction in \cite[Lemma 4]{Wall65}, there exists a bicollared CAT submanifold  $W \subset N_{1} \times I$ which is an $h$-cobordism between  $F_{1} \times \{ 0 \}$ and $h^{-1} (F_{2}) \times \{ 1 \}$. (In \cite[Lemma 4]{Wall65}, $N_1$ is a product $F_1 \times S^1$, but the construction applies to general fiber bundles.)

Since $\pi_{1} (F_{1}) = G$ is a free abelian group, the Whitehead group $\mathrm{Wh} (G) =0$. The $s$-Cobordism Theorem implies that $W$ is isomorphic to $F_{1} \times I$. (When $\dim N_{1} =5$, this holds in the TOP category by Freedman's TOP $s$-Cobordism Theorem \cite[7.1A]{Freedman_Quinn}.) Cutting $N_{1} \times I$ along $W$, we get a connected manifold $U$ such that $\partial U$ consists of four parts $N_{1}'$, $N_{1}''$, $W'$ and $W''$. Here $W'$ and $W''$ are two copies of $W$, and $N_{1} \times \{ 0 \}$ (resp.\  $N_{1} \times \{ 1 \}$) becomes $N_{1}'$ (resp.\ $N_{1}''$).

We are going to show that  $(U; N_{1}', N_{1}'')$ is an s-cobordism relative to $W' \cup W''$. To achieve this, we first prove  the inclusion $\pi_{1} (N_{1}') \rightarrow \pi_{1} (U)$ is an isomorphism, which occupies the main part of the whole proof.

Note that $N_{1}'$ has two ends corresponding to $F_{1}$ in $N_{1} \times \{ 0 \}$. Let $F_{1}'$ denote the end in $W'$. Then both $\pi_{1} (F_{1}') \rightarrow \pi_{1} (N_{1}')$ and $\pi_{1} (F_{1}') \rightarrow \pi_{1} (W')$ are isomorphisms. By the commutative diagram
\[
\xymatrix{
G= \pi_{1} (F_{1}') \ar[d] \ar[r] & \pi_{1} (N_{1} \times \{0\}) \ar[d]^{\cong} \\
\pi_{1} (U) \ar[r] & \pi_{1} (N_{1} \times I) = G \times \mathbb{Z},
}
\]
we see $\pi_1(F_1') \to \pi_1(U)$ is injective. Therefore $\pi_1(N_1') \to \pi_1(U)$ is injective.

In the following, we use $pt$ to denote various chosen base points. Choose a base point $pt$ in $F_{1} \times \{ 0 \} \subset N_{1} \times \{ 0 \}$ and a loop $\gamma$ representing $(0,1) \in G \times \mathbb{Z} = \pi_{1} (N_{1} \times \{ 0 \}, pt)$. This $\gamma$ corresponds to a curve $\gamma'$ in $N_{1}'$. Let $\lambda: W' \rightarrow W''$ be the identification homeomorphism. Then $\gamma'$ connects $pt$ and $\lambda (pt)$. (Here $W'$ is identified with $W$.) We may assume $\gamma'$ is from $pt$ to $\lambda (pt)$. For each $a \in \pi_{1} (W', pt)$, define $\theta (a) = [\gamma'] \lambda_{*} (a) [\gamma']^{-1} \in \pi_{1} (U, pt)$. By the Second Van-Kampen Theorem \cite[p.\ 203]{Zastrow},
\[
G \times \mathbb{Z} = \pi_{1} (N_{1} \times I, pt) = \pi_{1} (U, pt) * \mathbb{Z} / \sim
\]
with
\begin{eqnarray*}
\sim & = & \{ a \cdot t^{-1} \cdot \theta (a)^{-1} \cdot t \mid a \in \mathrm{Im} (\pi_{1} (W', pt) \rightarrow \pi_{1} (U, pt)), t=1 \in \mathbb{Z} \} \\
& = & \{ a \cdot t^{-1} \cdot \theta (a)^{-1} \cdot t \mid a \in \mathrm{Im} (\pi_{1} (F_{1}', pt) \rightarrow \pi_{1} (U, pt)), t=1 \in \mathbb{Z} \}.
\end{eqnarray*}
For each $a \in \mathrm{Im} (\pi_{1} (F_{1}', pt) \rightarrow \pi_{1} (U, pt))$, we may consider $a$ as an element in $\pi_{1} (F_{1}', pt)$ since we have proved $\pi_{1} (F_{1}', pt) \rightarrow \pi_{1} (U, pt)$ is injective. Then $\theta (a)$ can be considered as an element in $\pi_{1} (N_{1}', pt)$. Clearly, $a = \theta (a)$ in $\pi_{1} (N_{1}', pt)$. Thus $a = \theta (a)$ in $\pi_{1} (U, pt)$. We further obtain
\begin{eqnarray*}
\sim & = & \{ a t^{-1} a^{-1} t \mid a \in \mathrm{Im} (\pi_{1} (F_{1}', pt) \rightarrow \pi_{1} (U, pt)), t=1 \in \mathbb{Z} \} \\
& = & \{ a t^{-1} a^{-1} t \mid a \in \mathrm{Im} (\pi_{1} (N_{1}', pt) \rightarrow \pi_{1} (U, pt)), t=1 \in \mathbb{Z} \}.
\end{eqnarray*}
Therefore, the following diagram commutes.
\[
\xymatrix{
\pi_{1} (N_{1}) \ar[d]_{\cong} & \ar[l]_-{\cong} \pi_{1} (N_{1}') * \mathbb {Z}/\sim \ar[r]^-{\cong} \ar[d] & \pi_{1} (N_{1}') \times \mathbb {Z} \ar[d] \\
\pi_{1} (N_{1} \times I) & \ar[l]_-{\cong} \pi_{1} (U) * \mathbb{Z}/\sim \ar[r] & \pi_{1} (U) \times \mathbb{Z}
}.
\]
Here the map $\pi_{1} (U) * \mathbb{Z}/\sim \rightarrow \pi_{1} (U) \times \mathbb{Z}$ is a quotient map and hence is surjective. Thus we infer the last vertical map of the above diagram is also surjective, which implies the surjectivity of $\pi_{1} (N_{1}') \rightarrow \pi_{1}(U)$. Up to now, we see $\pi_{1} (N_{1}') \rightarrow \pi_{1}(U)$ is an isomorphism.

Next we show $H_*(U,N_1'; \mathcal B)=0$ for any local system $\mathcal B$ on $U$. Since $\pi_{1} (U)$ is a direct summand of $\pi_{1} (N_{1} \times I)$, every local system on $U$ can be induced from one on $N_{1} \times I$. We have
\[
H_{*} (U, N_{1}'; \mathcal{B}) \cong H_{*} (U, N_{1}' \cup W' \cup W''; \mathcal{B}) \cong H_{*} (N_{1} \times I, N_{1} \times \{0\} \cup W; \mathcal{B}) = 0,
\]
where the second isomorphism is by excision, and the last equality holds since
\[
N_{1} \times \{ 0 \} \hookrightarrow (N_{1} \times \{ 0 \}) \cup W \hookrightarrow N_{1} \times I
\]
are homotopy equivalences. Thus $(U; N_{1}', N_{1}'')$ is an $h$-cobordism. Since $\mathrm{Wh}(G)=0$, it is an $s$-cobordism.

By the relative $s$-Cobordism Theorem, we can extend the isomorphism $F_{1} \times I \rightarrow W$ to an isomorphism $C': N'_{1} \times I \rightarrow U$ which results in the desired pseudo-isotopy $C$.
\end{proof}

\begin{remark}
Proposition \ref{prop_fiber} is true in general for groups $G$ with $\mathrm{Wh} (G)=0$, and in addition good in the sense of Freedman when $\dim N_{i} =5$.
\end{remark}

We conclude this section with the proofs of the corollaries on periodicity.

\begin{proof}[Proof of Corollary \ref{cor_period}]
Let $\varphi \colon \pi_1(M) = G \times \mathbb Z \to \mathbb Z$ be the projection. Then the assumptions of  Theorem \ref{thm_cw_period} are fulfilled. By the proof of Theorem \ref{thm_high_manifold} (2), the following diagram commutes up to homotopy.
\[
\xymatrix{
F \ar[d]_{\simeq} \ar[r]^{f} & F \ar[d]^{\simeq} \\
M_{\infty} \ar[r]^{d} & M_{\infty}
}
\]
Since the above vertical maps are homotopy equivalences, by Theorem \ref{thm_cw_period} (2), there exist $m_{j} >0$, prime to $k$, and $l_{j} >0$ such that $f_{*}^{m_{j}}$ (resp. $f_{*}^{l_{j}}$) equals the identity on $H_{j} (F; \mathbb{Z}) / T_{j}$ (resp. $H_{j} (F; \mathbb{Z})$) for $0 \leq j \leq \dim F$, where $T_{j}$ is the torsion of $H_{j} (F; \mathbb{Z})$. We finish the proof by defining $m$ (resp. $l$) as the product of these $m_{j}$ (resp. $l_{j}$).
\end{proof}

\begin{proof}[Proof of Corollary \ref{cor_5_top}]
In this case the fiber $F$ is a $1$-connected closed $4$-manifold. So $H_{\bullet} (F; \mathbb{Z})$ is a free abelian group. By Corollary \ref{cor_period}, $f_{*}^{m} = 1$ on $H_{\bullet} (F; \mathbb{Z})$ for some positive $m$ prime to $k$. This implies $f^m$ is topologically isotopic to $\mathrm{id}_F$  \cite[p.~161, Theorem~10.1]{Freedman_Quinn}.
\end{proof}

For a smooth manifold $F$, the mapping class group $\mathcal M(F)$ is the group of path components of the group of orientation preserving automorphisms of $F$, and the Torelli group $\mathcal{T} (F)$ is the subgroup of $\mathcal{M} (F)$ consisting of elements which act trivially on $H_{\bullet} (F; \mathbb{Z})$.

\begin{lemma}\label{lem_torelli}
Let $F$ be an $(n-1)$-connected closed smooth almost parallelizable $2n$-manifold with $n\ge 3$. Suppose $a \in \mathcal{M} (F)$, $b \in \mathcal{T} (F)$, and $b^{k} = aba^{-1}$ for some $k>1$. Then $b$ is a torsion element of $\mathcal{T} (F)$.
\end{lemma}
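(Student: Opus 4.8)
The plan is to reduce the lemma to a trivial statement in linear algebra over $\mathbb{Z}$, using Kreck's structure theorem for the mapping class group of a highly connected manifold to control $\mathcal{T}(F)$. First I would record what $H_\bullet(F;\mathbb{Z})$ looks like: since $F$ is $(n-1)$-connected (hence simply connected, hence orientable) and closed of dimension $2n$, we have $H_i(F;\mathbb{Z})=0$ for $1\le i\le n-1$ and, by Poincar\'e duality, also for $n+1\le i\le 2n-1$, while $H_n(F;\mathbb{Z})$ is free of finite rank (its torsion would be $\mathrm{Ext}(H_{n-1}(F;\mathbb{Z}),\mathbb{Z})=0$). Consequently $\mathcal{T}(F)$ is exactly the kernel of the action of $\mathcal{M}(F)$ on $H_n(F;\mathbb{Z})$. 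Now invoke Kreck's classification of isotopy classes of diffeomorphisms of $(n-1)$-connected almost parallelizable $2n$-manifolds with $n\ge 3$ \cite{Kreck}: it identifies $\mathcal{T}(F)$ with a \emph{finitely generated abelian} group, sitting in a short exact sequence $0\to\Theta_{2n+1}\to\mathcal{T}(F)\to\mathrm{Hom}(H_n(F;\mathbb{Z}),S\pi_n(SO_n))\to 0$ with $\Theta_{2n+1}$ finite. Write $\mathcal{T}(F)\cong\mathbb{Z}^s\oplus T$ with $T$ the (finite) torsion subgroup; the lemma asks us to show $b\in T$.

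Since $\mathcal{T}(F)$ is the kernel of a homomorphism defined on $\mathcal{M}(F)$, it is normal, so conjugation $c_a\colon x\mapsto a x a^{-1}$ restricts to an automorphism of $\mathcal{T}(F)$. This automorphism preserves the torsion subgroup $T$, hence induces an automorphism $\overline{c_a}$ of $\mathcal{T}(F)/T\cong\mathbb{Z}^s$, i.e.\ a matrix $\overline{c_a}\in\mathrm{GL}(s;\mathbb{Z})$. Writing $\overline b\in\mathbb{Z}^s$ for the class of $b$, the hypothesis $b^k=aba^{-1}=c_a(b)$ descends to the eigenvalue relation
\[
\overline{c_a}(\overline b)=k\,\overline b \qquad\text{in } \mathbb{Z}^s .
\]

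To finish, apply $\overline{c_a}^{-1}\in\mathrm{GL}(s;\mathbb{Z})$, which has integer entries, repeatedly: $\overline{c_a}^{-m}(\overline b)=k^{-m}\overline b\in\mathbb{Z}^s$ for every $m\ge 1$, so $k^m$ divides every coordinate of $\overline b$ for all $m$. Since $k>1$, this forces $\overline b=0$, i.e.\ $b\in T$, which is finite; hence $b$ is a torsion element of $\mathcal{T}(F)$. (One may phrase the last step as: $k$ cannot be an eigenvalue of a matrix in $\mathrm{GL}(s;\mathbb{Z})$, whose characteristic polynomial is monic over $\mathbb{Z}$ with constant term $\pm1$ --- compare the end of the proof of Proposition \ref{prop_matrix}.) This is the form of the result that will be used to prove Corollary \ref{cor_monodromy}.

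The only nontrivial ingredient is Kreck's theorem, and the two points that really matter there are that $\mathcal{T}(F)$ is finitely generated and, crucially, \emph{abelian} --- it is the latter that lets us pass to $\mathbb{Z}^s$ and identify torsion elements with a subgroup. I expect the main obstacle to be quoting Kreck in precisely the form needed (the smooth mapping class group, almost parallelizable $2n$-manifolds, $n\ge 3$); by contrast the linear-algebra argument is elementary and self-contained.
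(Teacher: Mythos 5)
Your strategy is the same as the paper's: invoke Kreck's structure theorem for the mapping class group, pass to a free abelian quotient of $\mathcal{T}(F)$, observe that the relation $b^k=aba^{-1}$ descends to an eigenvalue equation $\overline{c_a}(\overline b)=k\overline b$ in $\mathbb{Z}^s$, and conclude $\overline b=0$ because $k>1$ cannot be an eigenvalue of a $\mathrm{GL}(s;\mathbb{Z})$-matrix (the paper phrases this as a gcd argument, you as a divisibility argument; they are equivalent). Your concluding remark about the characteristic polynomial is also correct and tidy.

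The one place you overstep is the assertion that Kreck's theorem ``identifies $\mathcal{T}(F)$ with a finitely generated \emph{abelian} group.'' What Kreck (\cite{Kreck1}, Thm.~2) actually gives, and what the paper quotes, is a short exact sequence $0 \rightarrow C \rightarrow \mathcal{T}(F) \rightarrow A \rightarrow 0$ with $A$ finitely generated abelian, $C$ finite, and $C$ contained in the center of the full group $\mathcal{M}(F)$. This makes $\mathcal{T}(F)$ nilpotent of class $\le 2$ with finite commutator subgroup, but it does \emph{not} by itself make $\mathcal{T}(F)$ abelian, and a direct sum decomposition $\mathcal{T}(F)\cong\mathbb{Z}^s\oplus T$ is therefore not justified. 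The paper sidesteps this: it uses the centrality of $C$ in $\mathcal{M}(F)$ to see that conjugation by $a$ descends to $A$, then further to $A/T_A\cong\mathbb{Z}^u$, and runs the eigenvalue argument there; finally $[b]=0$ gives $b^s\in C$ for some $s>0$, with $C$ finite. Your version can be repaired along the same lines: since $\mathcal{T}(F)$ is nilpotent, its torsion elements do form a (finite normal) subgroup $T$ containing $C$, the quotient $\mathcal{T}(F)/T$ is a quotient of $A$ and hence free abelian, conjugation by $a$ preserves $T$ because inner automorphisms preserve element orders, and then your computation goes through verbatim. In short: right idea and right final argument, but replace the unsupported claim that $\mathcal{T}(F)$ is abelian with the correct structural input from Kreck's exact sequence (centrality of $C$ in $\mathcal{M}(F)$, or nilpotency of $\mathcal{T}(F)$).
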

\begin{proof}
The mapping class group of such an $F$ is computed in \cite{Kreck1}. By \cite[Theorem 2]{Kreck1}, there is an exact sequence
\begin{equation}\label{lem_torelli_1}
0 \rightarrow C \rightarrow \mathcal{T} (F) \rightarrow A \rightarrow 0,
\end{equation}
where $A$ is a finitely generated abelian group, $C$ is a finite subgroup of $\mathcal{T} (F)$, and $C$ is contained in the center of $\mathcal{M} (F)$. Let $T$ be the torsion subgroup of $A$, and $[b]$ be the image of $b$ under the projection $\mathcal{T} (F) \rightarrow A/T$.

We show $[b] =0$ by contradiction.  If $[b] \ne 0$, then $A/T \cong \mathbb{Z}^{u}$ for some $u>0$ and
\[
[b] = (w_{1}, \dots, w_{u}) \neq 0
\]
via this isomorphism. These $w_{i}$ have their greatest common divisor $d>0$, which we denote by $\mathrm{gcd} ([b]) = d$. Then $\mathrm{gcd} ([b]^{k}) = kd$. Since $C$ is contained in the center of $\mathcal{M} (F)$, the automorphism $\mathcal T(F) \to \mathcal T(F)$, $x \mapsto a x a^{-1}$ induces  an automorphism of $A/T$. By the assumption $b^{k} = aba^{-1}$, this automorphism maps $[b]$ to $[b]^{k}$. So $\mathrm{gcd} ([b]^{k}) = \mathrm{gcd} ([b]) = d$. We infer $d=kd$, which contradicts the fact $d>0$ and $k>1$.

As $[b]=0$, we see $b^{s} \in C$ for some $s>0$. Since $C$ is finite, the conclusion follows.
\end{proof}

\begin{proof}[Proof of Corollary \ref{cor_monodromy}]
Since $\pi_{1} (M) = \mathbb{Z}$, we see $F$ is $1$-connected. By the statement (3) in Theorem \ref{thm_high_manifold}, replacing $g$ with $g^{2}$ if necessary, we may assume that $f^{k}$ is pseudo-isotopic to $gfg^{-1}$ for some $k>1$ and $g$ preservers the orientation of $F$. We also know $\dim F \geq 6$. By Cerf's Theorem \cite{Cerf2}, $f^{k}$ is smooth isotopic to $gfg^{-1}$. Furthermore, by Corollary \ref{cor_period}, $f^{l_{1}}$ acts on $H_{\bullet} (F; \mathbb{Z})$ trivially for some $l_{1} >0$. Thus $f^{l_{1}}$ represents an element $[f^{l_{1}}] \in \mathcal{T} (F)$, $g$ represents an element $[g] \in \mathcal{M} (F)$, and $[f^{l_{1}}]^{k} = [g] [f^{l_{1}}] [g]^{-1}$.

Since $\pi_{i} (M) =0$ for all $1<i<n$, we infer $F$ is $(n-1)$-connected. Since $F$ is almost parallelizable and $n \geq 3$, we finish the proof by taking $a$ and $b$ in Lemma \ref{lem_torelli} as $[g]$ and $[f^{l_{1}}]$ respectively.
\end{proof}

\section{Manifold: Non-Fibering Examples}\label{sec_non-fiber}
In this section, we demonstrate various constructions of self-covering manifolds which are not fiber bundles over $S^1$. We first construct the $5$-dimensional example as stated in Theorem \ref{thm_5_diff}.

\begin{proof}[Proof of Theorem \ref{thm_5_diff}]
The construction of such a manifold follows from the theory of $4$-manifolds (see e.g. \cite[p.1]{Weinberger}). Let $E_8$ be the positive definite unimodular symmetric quadratic form over $\mathbb Z$ of rank $8$. Let $V$ be the $1$-connected closed topological 4-manifold with intersection form $E_{8} \oplus E_{8}$. The Kirby-Siebenmann invariant $\mathrm{ks} (V) = 0$ (c.~f.~\cite[Chapter 10]{Freedman_Quinn}). Let $N = V\times S^{1}$. The obstruction to the existence of a smooth structure on $N$ is the Kirby-Siebenmann class $\mathrm{ks} (N) \in H^{4}(N; \mathbb{Z}_{2})$ (\cite[p.~318, Theorem 5.4]{Kirby_Siebenmann}). Fix a point $x \in S^1$, the inclusion $i \colon V \rightarrow V \times \{ x \} \hookrightarrow N$ induces an isomorphism $H^{4} (N; \mathbb{Z}_{2}) \to H^4(V;\mathbb Z_2)$, with $i^{*} \mathrm{ks} (N) = \mathrm{ks} (V) =0$. Therefore $N$ is smoothable.

Equip $N$ with a smooth structure. For each integer $u \geq 0$, the cyclic covering $N_{2^{u}}$ is also a smooth manifold and homeomorphic to $N$. There are only finitely many distinct smooth structures on the topological manifold $N$ (\cite[p.\ 318, Theorem\ 5.4]{Kirby_Siebenmann}). Thus there exist $u_{1} < u_{2}$ such that $N_{2^{u_{1}}}$ is diffeomorphic to $N_{2^{u_{2}}}$. Let $M= N_{2^{u_{1}}}$ and $k= 2^{u_{2} - u_{1}}$. Then $M$ is diffeomorphic to $M_{k}$ and $k>1$.

It remains to show that $M_{q}$ is not a DIFF (PL) bundle over $S^{1}$ for all $q$. In fact, if $M_{q}$ were such a bundle, then its fiber $F$ would be a smooth $1$-connected closed manifold. (We may assume $F$ is connected by taking a finite cover of $S^{1}$.) Since $F$ is homotopy equivalent to $V$, its intersection form is also $E_{8} \oplus E_{8}$ which is positive definite. This contradicts  Donaldson's theorem that a positive definite intersection form of a smooth $4$-manifold must be diagonal (\cite[Theorem A]{Donaldson}).
\end{proof}

The main contents of this section are the proofs of Theorems \ref{thm_no_finite} and \ref{thm_no_fiber}, in which the obstructions to fibration come from the algebraic $K$-groups of the fundamental group. We first recall some preliminaries of algebraic $K$-theory, both for setting up the notations and for the convenience of the reader.

Let $\Gamma $ be a group, and let $\mathbb Z [\Gamma] = \{ \sum_{i} n_{i} g_{i} \ | \ n_{i} \in \mathbb{Z}, \ g_{i} \in  \Gamma \}$ be the integral group ring, with conjugation $\sum_{i} n_{i} g_{i} \mapsto \sum_{i} n_{i} g_{i}^{-1}$. The group $K_0(\mathbb Z [\Gamma])$ is the Grothendieck group of finitely generated projective $\mathbb Z [\Gamma]$-modules, i.~e., elements in $K_{0} (\mathbb{Z}[\Gamma])$ are of the form $[P]-[Q]$, where $P$ and $Q$ are finitely generated projective left $\mathbb{Z}[\Gamma]$-modules.
There is a canonical involution (or conjugation) on $K_{0} (\mathbb{Z}[\Gamma])$, defined as follows. For a finitely generated projective left $\mathbb Z [\Gamma]$-module $P$, its dual module $\mathrm{Hom}_{\mathbb{Z}[\Gamma]} (P, \mathbb{Z}[\Gamma])$ is a right $\mathbb{Z}[\Gamma]$-module. We convert this right module to a left module via the conjugation of $\mathbb{Z}[\Gamma]$ and denote it by $P^{*}$. Equivalently, if $P$ is represented by an idempotent matrix $A$ over $\mathbb{Z}[\Gamma]$, then $P^{*}$ is represented by the conjugate transpose of $A$. The map $[P] \mapsto [P^{*}]$ extends to an involution of $K_{0} (\mathbb{Z}[\Gamma])$, denoted by $y \mapsto y^{*}$. A caveat: for an abelian group $\Gamma$, the group ring $\mathbb{Z}[\Gamma]$ is a commutative ring, and hence the dual module $\mathrm{Hom}_{\mathbb{Z}[\Gamma]} (P, \mathbb{Z}[\Gamma])$ is automatically a left module, but this is \emph{not} the conjugate $P^{*}$ in the  definition.

Let $\Gamma = \mathbb{Z}/p$ be a cyclic group of prime order $p$. A nice reference for the basic facts about $K_{0} (\mathbb{Z}[\mathbb{Z}/p])$ is \cite[$\S$1-3]{Milnor71}. Let $\xi_{p} = e^{\frac{2 \pi i}{p}}$ be a $p$-th root of unity. There is an evident epimorphism $\mathbb{Z} [\mathbb{Z}/p] \rightarrow \mathbb{Z}[\xi_{p}]$, which, by Rim's Theorem, induces an isomorphism between the reduced $K$-groups
\[
\widetilde{K}_{0} (\mathbb{Z}[\mathbb{Z}/p]) \overset{\cong}{\longrightarrow} \widetilde{K}_{0} (\mathbb{Z}[\xi_{p}]).
\]
The group $\widetilde{K}_{0} (\mathbb{Z}[\xi_{p}])$ is isomorphic to the ideal class group of $\mathbb{Z}[\xi_{p}]$, whose elements are represented by the fractional ideals $L$ of $\mathbb{Z}[\xi_{p}]$ (c.~f.~\cite[p.\ 96]{Atiyah_Macdonald}). The inverse of $[L]$ is represented by
\[
L^{-1} = (\mathbb{Z}[\xi_{p}] : L) = \{ x \in \mathbb{Q}(\xi_{p}) \mid x \cdot L \subseteq \mathbb{Z}[\xi_{p}] \}.
\]
By \cite[Theorem 11.6~(c)]{Eisenbud}, we have the following lemma.
\begin{lemma}\label{lem_ideal_inverse}
$\mathrm{Hom}_{\mathbb{Z}[\xi_{p}]} (L, \mathbb{Z}[\xi_{p}]) = L^{-1}$.
\end{lemma}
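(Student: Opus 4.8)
The plan is to exhibit the canonical map between the two sides and check it is an isomorphism of $\mathbb{Z}[\xi_{p}]$-modules; since $\mathbb{Z}[\xi_{p}]$ is the ring of integers of the number field $\mathbb{Q}(\xi_{p})$, hence a Dedekind domain, and $L$ is a nonzero fractional ideal, this is a routine computation in the arithmetic of Dedekind domains. First I would define
\[
\Theta \colon L^{-1} \longrightarrow \mathrm{Hom}_{\mathbb{Z}[\xi_{p}]}(L, \mathbb{Z}[\xi_{p}]), \qquad x \longmapsto \bigl( m_{x} \colon \ell \mapsto x\ell \bigr).
\]
This is well defined because $xL \subseteq \mathbb{Z}[\xi_{p}]$ by the very definition of $L^{-1} = (\mathbb{Z}[\xi_{p}] : L)$, each $m_{x}$ is $\mathbb{Z}[\xi_{p}]$-linear since $\mathbb{Z}[\xi_{p}]$ is commutative, and $\Theta$ itself is $\mathbb{Z}[\xi_{p}]$-linear for the same reason. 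Injectivity is immediate: if $m_{x} = 0$ then $x\ell = 0$ for all $\ell \in L$, and since $L \neq 0$ and $\mathbb{Z}[\xi_{p}]$ is a domain, $x = 0$.

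For surjectivity, let $\phi \in \mathrm{Hom}_{\mathbb{Z}[\xi_{p}]}(L, \mathbb{Z}[\xi_{p}])$ be arbitrary. Since $L$ is a finitely generated $\mathbb{Z}[\xi_{p}]$-submodule of $\mathbb{Q}(\xi_{p})$, there is a nonzero integer $c$ with $cL \subseteq \mathbb{Z}[\xi_{p}]$. Fix any $a \in L \setminus \{0\}$. For every $\ell \in L$, the elements $ca$ and $c\ell$ lie in $\mathbb{Z}[\xi_{p}]$, so $(ca)\ell$ and $(c\ell)a$ both lie in $L$ and are equal; applying $\phi$ and using $\mathbb{Z}[\xi_{p}]$-linearity gives $(ca)\phi(\ell) = (c\ell)\phi(a)$ in $\mathbb{Z}[\xi_{p}]$. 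Cancelling $c$ in the domain $\mathbb{Q}(\xi_{p})$ yields $a\phi(\ell) = \ell\phi(a)$, hence $\phi(\ell) = x\ell$ with $x := \phi(a) a^{-1} \in \mathbb{Q}(\xi_{p})$ independent of $\ell$. Thus $\phi = m_{x}$, and since $xL = \phi(L) \subseteq \mathbb{Z}[\xi_{p}]$ we get $x \in L^{-1}$ and $\Theta(x) = \phi$. Therefore $\Theta$ is an isomorphism, which is the asserted identification.

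I do not expect any serious obstacle here; the only point needing care is that $L$ is fractional rather than integral, so the product of two elements of $L$ need not lie in $L$ — hence the device of first clearing denominators by $c$ before applying $\phi$. Alternatively, one may argue more conceptually: extending scalars along $\mathbb{Z}[\xi_{p}] \hookrightarrow \mathbb{Q}(\xi_{p})$ turns $\phi$ into an endomorphism of the one-dimensional $\mathbb{Q}(\xi_{p})$-vector space $L \otimes_{\mathbb{Z}[\xi_{p}]} \mathbb{Q}(\xi_{p}) \cong \mathbb{Q}(\xi_{p})$, which is multiplication by a scalar $x$, and one checks $x \in L^{-1}$ as above; this is essentially the content of \cite[Theorem~11.6~(c)]{Eisenbud}, which we cite for brevity.
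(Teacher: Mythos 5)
Your proof is correct. The paper does not prove this lemma at all --- it simply cites \cite[Theorem~11.6~(c)]{Eisenbud} --- so you have supplied a self-contained verification of exactly the fact being quoted, via the standard ``clear denominators, then compare with multiplication by $\phi(a)/a$'' argument that underlies the cited result.
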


The conjugation of $\mathbb{Z}[\mathbb{Z}/p]$ corresponds to the complex conjugation of $\mathbb{Z}[\xi_{p}]$. By Lemma~\ref{lem_ideal_inverse}, for $[L] \in \widetilde{K}_{0} (\mathbb{Z}[\xi_{p}])$, we obtain a formula for the involution
\begin{equation}\label{eqn_k0_conjugate}
[L]^{*} = \left[ \overline{L^{-1}}^{\mathbb{C}} \right] = - \left[ \overline{L}^{\mathbb{C}} \right],
\end{equation}
where $\overline{L}^{\mathbb{C}}$ is the complex conjugation of $L$.

There are classical results on the $(\pm 1)$-eigenspace of $\widetilde{K}_{0} (\mathbb{Z}[\xi_{p}])$ under the complex conjugation. More precisely, $\mathbb{Z}[\xi_{p}]$ contains the maximal real subring $\mathbb{Z}[\xi_{p} + \xi_{p}^{-1}]$, and $\widetilde{K}_{0} (\mathbb{Z}[\xi_{p} + \xi_{p}^{-1}])$ is also isomorphic to its ideal class group (\cite[Corollary 1.11]{Milnor71}). There are exact sequences (see \cite[Theorem 4..2, p.~82; Theorem 4.4, p.~84]{Lang})
\[
0 \rightarrow \widetilde{K}_{0} (\mathbb{Z} [\xi_{p} + \xi_{p}^{-1}]) \overset{\iota}{\rightarrow} \widetilde{K}_{0} (\mathbb{Z}[\xi_{p}]),
\]
and
\[
0 \rightarrow \ker \mathrm{N} \rightarrow \widetilde{K}_{0} (\mathbb{Z}[\xi_{p}]) \overset{\mathrm{N}}{\rightarrow} \widetilde{K}_{0} (\mathbb{Z} [\xi_{p} + \xi_{p}^{-1}]) \rightarrow 0,
\]
where $\iota$ is induced by the inclusion and $\mathrm{N}$ is the norm map. The order of $\widetilde{K}_{0} (\mathbb{Z} [\xi_{p} + \xi_{p}^{-1}])$ (resp. $\ker \mathrm{N}$) is called the second (resp. first) factor of the class number of $\mathbb Z[\xi_p]$ and denoted by $h_{p}^{+}$ (resp. $h_{p}^{-}$). Clearly, the image of $\iota$ has order $h_{p}^{+}$ and is contained in the $(+1)$-eigenspace of complex conjugation. Furthermore, $\ker \mathrm{N}$ is exactly the $(-1)$-eigenspace of complex conjugation (see \cite[Theorem 4.4, p.~84]{Lang}).

By (\ref{eqn_k0_conjugate}) and the above argument, we obtain the following lemmas.
\begin{lemma}\label{lem_class_number}
If $h_{p}^{-}$ (resp. $h_{p}^{+}$) has a prime factor $q$, then there exists $y \in \widetilde{K}_{0} (\mathbb{Z}[\mathbb{Z}/p])$ such that the order of $y$ is $q$, and $y^{*} = y$ (resp. $y^{*} = -y$).
\end{lemma}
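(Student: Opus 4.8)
The plan is to transport the statement to the ideal class group of $\mathbb{Z}[\xi_{p}]$, where the complex conjugation and its $(\pm 1)$-eigenspaces are classically understood, and then to extract an element of order $q$ by Cauchy's theorem for finite abelian groups. By Rim's Theorem we have $\widetilde{K}_{0}(\mathbb{Z}[\mathbb{Z}/p]) \cong \widetilde{K}_{0}(\mathbb{Z}[\xi_{p}])$, the latter identified with the ideal class group, and it suffices to produce the required $y$ there and pull it back.

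First I would relate the involution $*$ to complex conjugation. Writing $c(y) = \bigl[\,\overline{L}^{\mathbb{C}}\,\bigr]$ for $y = [L]$, formula (\ref{eqn_k0_conjugate}) says exactly that $y^{*} = -c(y)$ for all $y \in \widetilde{K}_{0}(\mathbb{Z}[\xi_{p}])$. Consequently $\{\,y : y^{*} = y\,\}$ is the $(-1)$-eigenspace of $c$, while $\{\,y : y^{*} = -y\,\}$ is the $(+1)$-eigenspace of $c$.

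Now suppose $q \mid h_{p}^{-}$. By definition $|\ker \mathrm{N}| = h_{p}^{-}$, and it was recalled that $\ker \mathrm{N}$ is precisely the $(-1)$-eigenspace of $c$. Since $q$ divides $|\ker \mathrm{N}|$, Cauchy's theorem provides $y \in \ker \mathrm{N}$ of order exactly $q$, and by the previous paragraph $y^{*} = y$. Symmetrically, if $q \mid h_{p}^{+}$, the image of $\iota$ is a subgroup of order $h_{p}^{+}$ contained in the $(+1)$-eigenspace of $c$, so Cauchy's theorem gives $y \in \mathrm{im}(\iota)$ of order exactly $q$, and then $y^{*} = -c(y) = -y$. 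Transporting $y$ back along Rim's isomorphism completes the proof.

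The only step requiring genuine care — and the sole place an error could enter — is the sign bookkeeping in the second paragraph: confusing $y^{*} = c(y)$ with $y^{*} = -c(y)$ would interchange the two eigenspaces and hence swap the two conclusions. But this is pinned down entirely by (\ref{eqn_k0_conjugate}), so I do not expect any substantive obstacle.
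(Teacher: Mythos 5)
Your proof is correct and follows exactly the route the paper intends: translate via Rim's theorem, use (\ref{eqn_k0_conjugate}) to identify $\{y:y^{*}=y\}$ with the $(-1)$-eigenspace of complex conjugation (namely $\ker\mathrm{N}$, of order $h_{p}^{-}$) and $\{y:y^{*}=-y\}$ with the image of $\iota$ (of order $h_{p}^{+}$) inside the $(+1)$-eigenspace, then apply Cauchy's theorem. The sign bookkeeping you flag as the delicate point is handled correctly.
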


\begin{lemma}\label{lem_k0}
If both $h_{p}^{-}$ and $h_{p}^{+}$ have odd prime factors, then $\widetilde{K}_{0} (\mathbb{Z}[\mathbb{Z}/p])$ contains elements $y_{1}$ and $y_{2}$ such that the orders of $y_{1}$ and $y_{2}$ are odd primes, $y_{1}^{*} = y_{1}$ and $y_{2}^{*} = -y_{2}$.
\end{lemma}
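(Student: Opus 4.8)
The plan is to obtain Lemma~\ref{lem_k0} as an immediate corollary of Lemma~\ref{lem_class_number}, which has already done all of the real work of identifying the conjugation action on $\widetilde{K}_{0}(\mathbb{Z}[\mathbb{Z}/p])$. There is essentially no new mathematical content here: the role of this lemma is only to record, in one place and in a form convenient for the constructions of Section~\ref{sec_non-fiber}, that both a $(+1)$-eigenvector and a $(-1)$-eigenvector of the involution $y \mapsto y^{*}$ of prime order exist simultaneously.

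Concretely, I would proceed as follows. By hypothesis $h_{p}^{-}$ has an odd prime factor; call it $q_{1}$. Applying Lemma~\ref{lem_class_number} in the ``$h_{p}^{-}$'' case with $q = q_{1}$ produces an element $y_{1} \in \widetilde{K}_{0}(\mathbb{Z}[\mathbb{Z}/p])$ of order exactly $q_{1}$ with $y_{1}^{*} = y_{1}$. Likewise $h_{p}^{+}$ has an odd prime factor $q_{2}$, and Lemma~\ref{lem_class_number} in the ``$h_{p}^{+}$'' case with $q = q_{2}$ produces $y_{2} \in \widetilde{K}_{0}(\mathbb{Z}[\mathbb{Z}/p])$ of order exactly $q_{2}$ with $y_{2}^{*} = -y_{2}$. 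Since $q_{1}$ and $q_{2}$ were chosen to be odd primes, the elements $y_{1}, y_{2}$ have the asserted orders and eigenvalues, and the lemma follows.

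The only point that needs care — and it is already folded into Lemma~\ref{lem_class_number} — is the sign bookkeeping: one must know that the eigenspace whose order is governed by $h_{p}^{-}$ is the $(+1)$-eigenspace of $*$, and the one governed by $h_{p}^{+}$ is the $(-1)$-eigenspace of $*$. This comes from the identity $[L]^{*} = -[\overline{L}^{\mathbb{C}}]$ of (\ref{eqn_k0_conjugate}): the involution $*$ differs from complex conjugation by an overall sign, so $\ker \mathrm{N}$ (the $(-1)$-eigenspace of complex conjugation, of order $h_{p}^{-}$) is the $(+1)$-eigenspace of $*$, while the image of $\iota$ (inside the $(+1)$-eigenspace of complex conjugation, of order $h_{p}^{+}$) sits in the $(-1)$-eigenspace of $*$; then Cauchy's theorem applied to these finite abelian groups supplies elements of the prescribed prime orders. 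Because this analysis is already carried out in the proof of Lemma~\ref{lem_class_number}, I do not expect any genuine obstacle: the argument is a one-line deduction, and the ``hard part'', such as it is, amounts to correctly quoting the previous lemma twice.
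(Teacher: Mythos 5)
Your proposal is correct and matches the paper's intent exactly: the paper states both Lemma~\ref{lem_class_number} and Lemma~\ref{lem_k0} with the single remark that they follow from (\ref{eqn_k0_conjugate}) and the preceding discussion, so Lemma~\ref{lem_k0} is meant to be read as two applications of Lemma~\ref{lem_class_number}, one to each of $h_{p}^{-}$ and $h_{p}^{+}$, which is precisely what you wrote. Your sign bookkeeping (that $*$ flips the complex-conjugation eigenspaces, so $\ker\mathrm{N}$ of order $h_{p}^{-}$ lies in the $(+1)$-eigenspace of $*$ and the image of $\iota$ of order $h_{p}^{+}$ lies in the $(-1)$-eigenspace) is also the correct reading of the paper.
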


\begin{remark}\label{rmk_class_number}
There do exist $p$ such that both $h_{p}^{-}$ and $h_{p}^{+}$ have odd prime factors. By comparing the table in \cite[p.~352]{Washington} and the main table in \cite[p.\ 935]{Schoof}, we can find a few such $p$, for example, $p=191$ etc.
\end{remark}

Next we briefly discuss the Whitehead group $\mathrm{Wh}(\Gamma)$. Recall that elements in $\mathrm{Wh}(\Gamma)$ are represented by matrices in the general linear group $\mathrm{GL} (\mathbb{Z} [\Gamma])$. Let $\Gamma= \mathbb{Z}/p \times \mathbb{Z}$. Then $
\mathbb{Z} [\mathbb{Z}/p \times \mathbb{Z}] = \mathbb{Z} [\mathbb{Z}/p] [t,t^{-1}]
$
is the Laurent polynomial ring over $\mathbb{Z} [\mathbb{Z}/p]$. The Bass-Heller-Swan decomposition (c.f. \cite[Chapter 6]{Farrell_Su}) gives an isomorphism
\begin{equation}\label{eqn_BHS}
\mathrm{Wh} (\mathbb{Z}/p \times \mathbb{Z}) \cong \mathrm{Wh} (\mathbb{Z}/p) \oplus \widetilde{K}_{0} (\mathbb{Z}[\mathbb{Z}/p]) \oplus \mathrm{Nil} (\mathbb{Z}[\mathbb{Z}/p]) \oplus \mathrm{Nil} (\mathbb{Z}[\mathbb{Z}/p]),
\end{equation}
where the summand $\mathrm{Wh} (\mathbb{Z}/p)$ is induced from the inclusion $\mathbb{Z}/p \times 0 \hookrightarrow \mathbb{Z}/p \times \mathbb{Z}$, and the summand $\widetilde{K}_{0} (\mathbb{Z}[\mathbb{Z}/p])$ means there is a monomorphism
\[
\phi \colon \ \widetilde{K}_{0} (\mathbb{Z}[\mathbb{Z}/p]) \rightarrow \mathrm{Wh} (\mathbb{Z}/p \times \mathbb{Z})
\]
defined as follows. Each element in $\widetilde{K}_{0} (\mathbb{Z}[\mathbb{Z}/p])$ is of the form $[P]-[Q]$, where $P$ and $Q$ are $n \times n$ idempotent matrices over $\mathbb{Z}[\mathbb{Z}/p])$ for some $n$. Then
\[
\phi ([P]-[Q]) = [tP+ I_{n} - P] - [tQ+ I_{n} - Q],
\]
where $I_{n}$ is the $n \times n$ identity matrix.

There is also an involution on $\mathrm{Wh}(\Gamma)$: for $x \in \mathrm{Wh}(\Gamma)$ represented by an $A \in \mathrm{GL} (\mathbb{Z} [\Gamma])$, the involution $x^{*}$ is represented by the conjugate transpose $A^{*}$ of $A$.
\begin{lemma}\label{lem_k1_conjugate}
For each $y \in \widetilde{K}_{0} (\mathbb{Z}[\mathbb{Z}/p])$, $\phi(y)^{*} = - \phi \left( y^{*} \right)$.
\end{lemma}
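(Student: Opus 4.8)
The plan is to verify the identity by computing both $\phi(y)^*$ and $\phi(y^*)$ directly from the definitions of $\phi$ and of the two involutions, at which point everything reduces to a single elementary matrix identity.

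First I would fix a representative $y = [P] - [Q] \in \widetilde{K}_0(\mathbb{Z}[\mathbb{Z}/p])$ with $P, Q$ idempotent $n \times n$ matrices over $\mathbb{Z}[\mathbb{Z}/p]$. Since conjugate transposition $A \mapsto A^*$ (with respect to the conjugation of $\mathbb{Z}[\mathbb{Z}/p]$) is an anti-automorphism of matrix rings, one has $(P^*)^2 = (P^2)^* = P^*$, so $P^*$ and $Q^*$ are again idempotent; hence $y^* = [P^*] - [Q^*]$, and evaluating the definition of $\phi$ on this representative gives $\phi(y^*) = [tP^* + I_n - P^*] - [tQ^* + I_n - Q^*]$. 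On the other hand, the conjugation on $\mathbb{Z}[\mathbb{Z}/p \times \mathbb{Z}] = \mathbb{Z}[\mathbb{Z}/p][t,t^{-1}]$ restricts to the conjugation of $\mathbb{Z}[\mathbb{Z}/p]$ and sends $t$ to $t^{-1}$; therefore the conjugate transpose of $tP + I_n - P$ is $t^{-1}P^* + I_n - P^*$, and likewise for $Q$, so that $\phi(y)^* = [t^{-1}P^* + I_n - P^*] - [t^{-1}Q^* + I_n - Q^*]$.

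The remaining step is the key point: for any idempotent matrix $E$ one has $(tE + I_n - E)(t^{-1}E + I_n - E) = I_n$, which follows by expanding and using $E^2 = E$ (the cross terms $tE(I_n - E)$ and $(I_n - E)t^{-1}E$ vanish, while $tE \cdot t^{-1}E + (I_n - E)^2 = E + (I_n - E) = I_n$). Consequently $[t^{-1}E + I_n - E] = -[tE + I_n - E]$ in $K_1$, and hence in $\mathrm{Wh}(\mathbb{Z}/p \times \mathbb{Z})$. Applying this with $E = P^*$ and $E = Q^*$ to the expression for $\phi(y)^*$ found above yields $\phi(y)^* = -[tP^* + I_n - P^*] + [tQ^* + I_n - Q^*] = -\phi(y^*)$, which is the assertion.

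Since the whole argument is a direct unwinding of definitions, I do not anticipate a genuine obstacle. The two points that need a little care are confirming that conjugate transposition preserves idempotency — so that the formula for $\phi$ may legitimately be evaluated on $P^*$ and $Q^*$ — and correctly tracking the interchange $t \leftrightarrow t^{-1}$ induced by the involution, since that interchange is precisely what manufactures the minus sign.
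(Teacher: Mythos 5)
Your proof is correct and follows essentially the same route as the paper's: compute the conjugate transpose to get $[t^{-1}P^* + I - P^*]$ and then observe this is the negative of $[tP^* + I - P^*]$ in the Whitehead group. The paper simply asserts this last step; you supply the short verification that $tE + I_n - E$ and $t^{-1}E + I_n - E$ are mutually inverse for idempotent $E$, which is exactly what that step rests on.
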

\begin{proof}
Suppose $y = [P] - [Q]$. We have
\[
\phi ([P])^{*} = [tP + (I-P)]^{*} = [t^{-1} P^{*}+ I - P^{*}] = -[t P^{*}+ I - P^{*}] = - \phi \left( [P]^{*} \right).
\]
Here the third equality is because $(t^{-1} P^{*}+ I - P^{*}) (t P^{*}+ I - P^{*}) = I$. Similarly, $\phi ([Q])^{*} = - \phi \left( [Q]^{*} \right)$. The conclusion follows.
\end{proof}

Given any integer $k>0$, for the finite index subgroup $\mathbb{Z}/p \times (k \mathbb{Z}) \subseteq \mathbb{Z}/p \times \mathbb{Z}$, there is a transfer homomorphism
\[
\mathrm{tr} \colon \ \mathrm{Wh} (\mathbb{Z}/p \times \mathbb{Z}) \rightarrow \mathrm{Wh} (\mathbb{Z}/p \times (k \mathbb{Z})).
\]
There is also a canonical isomorphism $\mathbb{Z}/p \times \mathbb{Z} \rightarrow \mathbb{Z}/p \times (k \mathbb{Z})$ sending $1 \in \mathbb{Z}$ to $k \in k \mathbb{Z}$.

\begin{lemma}\label{lem_transfer}
Let $x$ be in the summand $\widetilde{K}_{0} (\mathbb{Z}[\mathbb{Z}/p])$ of (\ref{eqn_BHS}). Then $\mathrm{tr} (x) =x$ via the canonical isomorphism $\mathbb{Z}/p \times \mathbb{Z} \cong \mathbb{Z}/p \times (k \mathbb{Z})$ for each $k>0$.
\end{lemma}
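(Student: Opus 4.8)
The plan is to compute the transfer directly from its definition as restriction of scalars, and to match the answer with the Bass--Heller--Swan inclusion for the subgroup. Set $\Lambda = \mathbb{Z}[\mathbb{Z}/p]$, $R = \mathbb{Z}[\mathbb{Z}/p \times \mathbb{Z}] = \Lambda[t, t^{-1}]$ and $R' = \mathbb{Z}[\mathbb{Z}/p \times (k\mathbb{Z})] = \Lambda[s, s^{-1}]$ with $s = t^{k}$, so that the canonical isomorphism $\mathbb{Z}/p \times \mathbb{Z} \cong \mathbb{Z}/p \times (k\mathbb{Z})$ is realized by the ring isomorphism $\Lambda[t, t^{-1}] \to \Lambda[s, s^{-1}]$, $t \mapsto s$. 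The ring $R$ is free of rank $k$ over $R'$ on the basis $1, t, \dots, t^{k-1}$, and the transfer $\mathrm{tr} \colon K_{1}(R) \to K_{1}(R')$ (which descends to $\mathrm{Wh}$) is induced by viewing an automorphism of a finitely generated free $R$-module as an automorphism of the same module over $R'$. Since $\phi$ and $\mathrm{tr}$ are both additive, it suffices to prove the identity for $x = \phi([P]) = [tP + I_{n} - P]$, where $P \in M_{n}(\Lambda)$ is idempotent.

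Next I would analyze the automorphism $\alpha = tP + I_{n} - P$ of $R^{n}$. Using $P^{2} = P$ one checks that, relative to the decomposition $R^{n} = P R^{n} \oplus (I_{n} - P) R^{n}$, the map $\alpha$ is multiplication by $t$ on $P R^{n}$ and the identity on $(I_{n} - P) R^{n}$, so $\mathrm{tr}[\alpha] \in K_{1}(R')$ is the class of multiplication by $t$ on the projective $R'$-module $P R^{n}$. Now $R = \bigoplus_{j=0}^{k-1} t^{j} R'$ as an $R'$-module, hence $P R^{n} = \bigoplus_{j=0}^{k-1} t^{j} Q$ with $Q = P (R')^{n}$; in this decomposition multiplication by $t$ carries $t^{j} Q$ identically onto $t^{j+1} Q$ for $0 \le j \le k-2$, and carries $t^{k-1} Q$ onto $t^{0} Q = Q$ by multiplication by $s$. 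Thus, as an automorphism of $Q^{\oplus k}$, it is the block companion automorphism with $k-1$ identity blocks along the cyclic shift and the single block $s \cdot \mathrm{id}_{Q}$ closing the cycle.

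To finish, I would compute the class of this block companion automorphism in $K_{1}(R')$ by factoring it as the block-diagonal automorphism $\mathrm{diag}(s \cdot \mathrm{id}_{Q}, \mathrm{id}_{Q}, \dots, \mathrm{id}_{Q})$ composed with the block cyclic permutation of the $k$ copies of $Q$. The permutation factor contributes the image of $(-1)^{k-1} \in \mathbb{Z}^{\times} = K_{1}(\mathbb{Z})$, a trivial unit, which vanishes in $\mathrm{Wh}(\mathbb{Z}/p \times (k\mathbb{Z}))$; the diagonal factor contributes $[s \cdot \mathrm{id}_{Q}] = [sP + I_{n} - P]$, since adjoining the identity on the complement $(I_{n} - P)(R')^{n}$ does not alter the $K_{1}$-class. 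Hence $\mathrm{tr}(x) = [sP + I_{n} - P]$, which is precisely the image of $[P]$ under the analogue of $\phi$ for $\mathbb{Z}/p \times (k\mathbb{Z})$ built from the variable $s$, and this coincides with $\phi([P]) = x$ under the canonical isomorphism $t \mapsto s$. I expect the main obstacle to be the bookkeeping in the middle step: checking that $P R^{n} = \bigoplus_{j} t^{j} Q$ is an identification of $R'$-modules, that multiplication by $t$ is exactly the stated block companion automorphism, and that the resulting class $[sP + I_{n} - P]$ matches the Bass--Heller--Swan convention used in (\ref{eqn_BHS}); once these conventions are pinned down, the $K_{1}$-computation and the vanishing of the permutation term are routine.
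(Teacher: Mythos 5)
Your proposal takes a genuinely different and more hands-on route than the paper: the paper quotes the Bass projection formula to identify the $\widetilde{K}_0$-component of $\mathrm{tr}(x)$ and to kill the Nil-components, and then uses that $\mathrm{Wh}(\mathbb{Z}/p)$ is torsion-free and $x$ has finite order to kill the $\mathrm{Wh}(\mathbb{Z}/p)$-component; you instead compute the transfer of $[tP+I_n-P]$ directly. The structural part of your computation is correct: $R=\bigoplus_{j=0}^{k-1}t^jR'$, the induced automorphism of $PR^n=\bigoplus_j t^jQ$ is the block companion $\mathrm{diag}(s\cdot\mathrm{id}_Q,\mathrm{id}_Q,\dots,\mathrm{id}_Q)\circ\gamma$ with $\gamma$ the block cyclic shift, and the diagonal factor has $K_1$-class $[sP+I_n-P]=\phi'([P])$. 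This is a nice, concrete derivation that avoids the Bass projection formula altogether.

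However, the step where you dispose of the permutation factor has a real gap. You assert that $\gamma$ acting on $Q^{\oplus k}$ ``contributes the image of $(-1)^{k-1}\in K_1(\mathbb{Z})$.'' That would be true if $Q$ were free, but $Q=P(R')^n$ is only projective, and the $K_1$-class of a coordinate permutation acting on $Q^{\oplus k}$ depends on $[Q]\in K_0$. Concretely, decomposing the $k$-cycle into $k-1$ block transpositions and using the standard elementary-matrix identity for a $2\times 2$ swap shows $[\gamma]=(k-1)[-\mathrm{id}_Q]$, where $[-\mathrm{id}_Q]$ is the $K_1$-class of $x\mapsto -x$ on $Q$ (the cup product of $[Q]$ with $-1\in K_1(\mathbb{Z})$). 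This class need not coincide with the class of $-\mathrm{id}$ on a free module of the same rank, so your claim is not justified as stated. The gap is repairable: the stabilized matrix representing $\gamma$ has entries in $\Lambda=\mathbb{Z}[\mathbb{Z}/p]$ (it is $(I-P)\otimes I_k+P\otimes C$ with $C$ the cyclic permutation matrix), so $[\gamma]$ lies in the image of $K_1(\Lambda)$, and it is visibly $2$-torsion. Since the torsion of $K_1(\mathbb{Z}[\mathbb{Z}/p])$ consists of trivial units (equivalently, $\mathrm{Wh}(\mathbb{Z}/p)$ is torsion-free, which is exactly the Oliver result the paper invokes), $[\gamma]$ vanishes in $\mathrm{Wh}$. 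So after this repair your proof goes through, but it turns out to rely on the same key input as the paper's proof; the conceptual gain is that you replace the Bass projection black box by an explicit matrix factorization.
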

\begin{proof}
By the Bass Projection (\cite[Chapter 6]{Farrell_Su}), the projection of $\mathrm{tr} (x)$ on the second summand of (\ref{eqn_BHS}) is $x$, the projections to the Nil-groups are $0$. Since $\mathrm{Wh} (\mathbb{Z}/p)$ is free (\cite[Theorem 5.6]{Oliver}) and $x$ is of finite order (\cite[p.~98]{Atiyah_Macdonald}), the projection of $\mathrm{tr} (x)$ on the first summand of (\ref{eqn_BHS}) is also $0$. Thus $\mathrm{tr} (x) =x$.
\end{proof}

Now we are ready to prove Theorem \ref{thm_no_fiber}.

\begin{proof}[Proof of Theorem \ref{thm_no_fiber}]
Since both $h_{p}^{-}$ and $h_{p}^{+}$ have odd prime factors, by Lemmas \ref{lem_k0} and \ref{lem_k1_conjugate}, there exists an $x$ in the summand $\widetilde{K}_{0} (\mathbb{Z}[\mathbb{Z}/p])$ of (\ref{eqn_BHS}) such that $2x \neq 0$ and $(-1)^{n} x^{*} = -x$.

Let $N=F \times S^{1}$. Since $n \geq 5$, by the classification of $h$-cobordisms (\cite[Theorem 11.1]{Milnor66}), there is a smooth $h$-cobordism $(W; N, M)$ such that the Whitehead torsion $\tau (W,N) = x$. Now $M$ satisfies the statement (1).

Note that $F$ is automatically orientable since $\pi_{1} (F) = \mathbb{Z}/p$ for $p>2$, and hence $W$ is orientable. By the Duality Theorem of Whitehead torsion (\cite[p.~394]{Milnor66}), we have
\[
\tau (W,M) = (-1)^{n} \tau (W,N)^{*} = (-1)^{n} x^{*} = -x.
\]
For each integer $k>1$, consider the $k$-fold covering $(W_{k}; N_{k}, M_{k})$ of $(W; N, M)$ associated to $\mathbb{Z}/p \times (k \mathbb{Z}) \subset \mathbb{Z}/p \times \mathbb{Z}$. It satisfies that $\tau (W_{k}, N_{k}) = \mathrm{tr} (x)$. By Lemma \ref{lem_transfer}, we infer $\tau (W_{k}, N_{k}) = x$ via the canonical isomorphism $\mathbb{Z}/p \times \mathbb{Z} \cong \mathbb{Z}/p \times (k \mathbb{Z})$.

Identify $N$ with $N_{k}$. We consider $(W_{k}; N_{k}, M_{k})$ as $(W_{k}; N, M_{k})$ and hence
\[
\tau (W_{k}, N) = \tau (W_{k}, N_{k}) = \tau (W, N) =x.
\]
By the classification of $h$-coborisms (\cite[Theorem~11.3]{Milnor66}), $M$ is diffeomorphic to $M_{k}$. We obtain the statement (2).

Finally, let's conclude statement (3). Let $i: M \hookrightarrow W$ be the inclusions and $r: W \rightarrow N$ be the retraction. Then the Whitehead torsion of the homotopy equivalence $\varphi = r \circ i$ is
\[
\tau (\varphi) = \tau (i) + \tau (r) = \tau (W, M) - \tau (W, N) = -2x \neq 0.
\]
For each $q \geq 1$, the homotopy equivalence $\varphi$ lifts to a homotopy equivalence $\varphi_{q}: M_{q} \rightarrow N_{q}$ with Whitehead torsion (by Lemma \ref{lem_transfer})
\begin{equation}\label{thm_no_fiber_1}
\tau (\varphi_{q}) = \tau (\varphi) \neq 0.
\end{equation}

Suppose $\Pi: M_{q} \rightarrow S^{1}$ is a topological fiber bundle. Taking a finite cover of $S^{1}$ and changing the orientation of $S^{1}$ if necessary, we may assume
\[
\Pi_{\sharp}: \ \ \pi_{1} (M_{q}) = \mathbb{Z}/p \times \mathbb{Z} \rightarrow \pi_{1} (S^{1}) = \mathbb{Z}
\]
is the coordinate projection. Let $p_{2}: N_{q} = F \times S^{1} \rightarrow S^{1}$ be the projection, then $p_{2} \circ \varphi_{q}$ is homotopic to $ \Pi$. By the homotopy lifting property, $\varphi_{q}$ is homotopic to a fiber homotopy equivalence $\psi_{q}: M_{q} \rightarrow N_{q}$. By \cite[Theorem A]{Anderson}, we have
\[
\tau (\psi_{q}) = z \cdot \chi (S^{1})=0,
\]
where $z$ lies in the image of $\mathrm{Wh} (\pi_{1} (F)) \rightarrow \mathrm{Wh} (\pi_{1} (N_{q}))$, and $\chi (S^{1})$ is the Euler number of $S^{1}$. This contradicts (\ref{thm_no_fiber_1}) since $\tau(\psi_q)=\tau(\varphi_q)$. Here we note that, though \cite[Theorem~A]{Anderson} was originally proved in PL, it certainly can be extend to TOP by \cite{Chapman}, since topological manifolds are Hilbert cube manifold factors.
\end{proof}

We conclude this section with the proof of  Theorem \ref{thm_no_finite}. The construction of the manifolds $M$ in Theorem  \ref{thm_no_finite} consists of several steps. First of all, by Theorem \ref{thm_cw_homotopy}, the infinite cyclic cover $M_{\infty}$ of  $M$ is finitely dominated. We start by constructing a finitely dominated Poincar\'e complex.

\begin{proposition}\label{prop_poincare}
Let $G$ be a finitely presented group and fix an element $\theta \in \widetilde{K}_{0} (\mathbb{Z}[G])$. For each $n \geq 4$, there exists a finitely dominated Poincar\'{e} complex $Y$ of formal dimension $n$ such that:
\begin{enumerate}
\item $\pi_{1} (Y) = G$;

\item the Wall's finiteness obstruction of $Y$ is $\sigma (Y) = \theta + (-1)^{n} \theta^{*} \in \widetilde{K}_{0} (\mathbb{Z}[G])$;

\item the Spivak normal fibration of $Y$ is trivial.
\end{enumerate}
\end{proposition}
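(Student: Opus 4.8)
The plan is to realise $Y$ by starting from a \emph{finite} $n$-dimensional Poincar\'e complex with fundamental group $G$ and trivial Spivak normal fibration, and then altering its Wall finiteness obstruction by attaching a ``projective handle'' in complementary middle degrees, without disturbing $\pi_1$, the formal dimension, or the Spivak data. The point of the symmetric form $\theta+(-1)^n\theta^{*}$ is that Poincar\'e duality forces whatever finitely generated projective module $P$ (with $[P]=\theta$) is introduced in degree $j$ to be matched by its conjugate-dual $P^{*}$ in degree $n-j$, and the two contributions to the Euler characteristic assemble into exactly $\theta+(-1)^n\theta^{*}$.

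For the finite starting model I would use that, since $n\ge 4$ and $G$ is finitely presented, there is a closed stably parallelizable orientable $n$-manifold $X$ with $\pi_1(X)=G$: take $\#_g(S^1\times S^{n-1})$ and perform framed surgery on embedded circles representing a set of relators for $G$, choosing framings so the result stays stably parallelizable. A closed manifold is a finite Poincar\'e complex, and stable parallelizability makes the stable normal bundle, hence the Spivak normal fibration, fibre-homotopy trivial. (Alternatively one may invoke Wall's existence theorems for Poincar\'e complexes together with a triviality argument for the Spivak fibration.) This already gives the proposition for $\theta=0$ with $Y=X$, so the remaining work is to modify $X$ so that $\sigma$ changes from $0$ to $\theta+(-1)^n\theta^{*}$.

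Now fix an even integer $j$ with $2\le j\le n-2$ (for instance $j=2$, which works for all $n\ge 4$). Choose a finitely generated projective left $\mathbb{Z}[G]$-module $P$ with $[P]=\theta$ and an idempotent $\varepsilon\in M_r(\mathbb{Z}[G])$ with image $P$; its conjugate transpose $\varepsilon^{*}=\overline{\varepsilon}^{\,T}$ is again idempotent, with image the conjugate-dual module $P^{*}$ and $[P^{*}]=\theta^{*}$. First form the Poincar\'e connected sum $X'=X \# (\#_r(S^j\times S^{n-j}))$, still a finite $n$-dimensional Poincar\'e complex with the same $\pi_1$ and trivial Spivak fibration, whose universal-cover homology has acquired a free summand $\mathbb{Z}[G]^{r}$ in degrees $j$ and $n-j$, dual under the intersection pairing $\lambda$. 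Then take $Y$ to be the infinite mapping telescope of a self-map $\phi\colon X'\to X'$ that is the identity on $X$, induces $\varepsilon$ on the degree-$j$ summand and $\varepsilon^{*}$ on the degree-$(n-j)$ summand; these prescriptions are compatible because $\varepsilon^{*}$ is the $\lambda$-adjoint of $\varepsilon$ and the evaluation pairing $P\times P^{*}\to\mathbb{Z}[G]$ is perfect. The telescope $Y$ is dominated by the finite complex $X'$, so it is finitely dominated, has $\pi_1(Y)=G$ and formal dimension $n$, and — because $\phi$ is self-dual with respect to the Poincar\'e structure of $X'$ — carries an honest fundamental class making it an $n$-dimensional Poincar\'e complex; passing to the colimit replaces the free summands by $P$ in degree $j$ and $P^{*}$ in degree $n-j$ and leaves the rest of the homology equal to that of $X$. (Equivalently, one can realise the evident algebraic Poincar\'e complex $C_{\bullet}(X)\oplus(P\text{ in degree }j)\oplus(P^{*}\text{ in degree }n-j)$ geometrically via Wall's realisation theorem for algebraic Poincar\'e complexes.)

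Finally, since $Y$ is built from the finite complex $X'$ by a telescope, $\sigma(Y)$ is the Euler characteristic of the defect, $\sigma(Y)=\sigma(X')+(-1)^{j}[P]+(-1)^{n-j}[P^{*}]=(-1)^{j}\theta+(-1)^{n-j}\theta^{*}$, which for $j$ even is precisely $\theta+(-1)^n\theta^{*}$, giving (2); (1) is clear from the construction. For (3), the Spivak normal fibration of $Y$ is pulled back, via the collapse/domination map $Y\to X'\to X$, from the trivial Spivak fibration of the stably parallelizable manifold $X$ — the handles $S^j\times S^{n-j}$ are attached with trivial normal data and $\phi$ is chosen to respect it — so $Y$ has trivial Spivak fibration. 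I expect the delicate step to be the construction of $\phi$ on $X'$ in the previous paragraph: one must realise exactly the idempotents $\varepsilon$ and $\varepsilon^{*}$ on the two handle summands while being the identity on $X$, and then verify that the mapping telescope genuinely satisfies Poincar\'e duality with a fundamental class — that is, carry out ``projective Poincar\'e surgery'' on a single pair of handles and check the duality, $\pi_1$, and normal-data bookkeeping. The evenness of $j$ is forced precisely so the sign contributions combine to $\theta+(-1)^n\theta^{*}$ rather than $\theta-(-1)^n\theta^{*}$; the case $n=4$, where $j=n-j=2$, merely means $P$ and $P^{*}$ both land in the middle degree, which is harmless.
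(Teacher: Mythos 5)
Your strategy is genuinely different from the paper's: the paper gets (1) and (2) by citing Wall's realisation theorem (\cite[Theorem~1.5]{Wall67}), modifying the input so that the base of Wall's cell-attachment construction is a stably parallelizable manifold and then deducing (3) via the degree-$1$ inclusion $N\hookrightarrow Y$ and a Thom-space argument, while you propose to bypass Wall's theorem and build $Y$ directly as the infinite mapping telescope of a homotopy-idempotent self-map $\phi$ of a connected sum $X'=X\#(\#_r(S^j\times S^{n-j}))$. You correctly identify the sign bookkeeping (the symmetry $\theta+(-1)^n\theta^*$ comes from placing $P$ in degree $j$ and $P^*$ in degree $n-j$ with $j$ even), and the telescope idea is a legitimate way to realise a finiteness obstruction. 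But you flag the ``delicate step'' and then stop — and that step is precisely the content of Wall's theorem, so the proposal does not discharge the proposition, it restates it.

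Concretely, three things are asserted but not proved, and none is routine. First, the existence of $\phi$: you need a self-map of $X'$ inducing the identity on $\pi_1$ and on $X$, and acting $\mathbb{Z}[G]$-equivariantly as the idempotent matrix $\varepsilon$ (resp.\ $\varepsilon^*$) on the degree-$j$ (resp.\ degree-$(n-j)$) free summand of $H_\bullet(\widetilde{X'})$. Realising a prescribed $\mathbb{Z}[G]$-linear endomorphism geometrically and extending it coherently over the $(n-j)$-cells and the top cell of $X'$ is an obstruction-theoretic problem that has to be solved, not stated. Second, and most serious, is the claim that the telescope ``carries an honest fundamental class making it an $n$-dimensional Poincar\'e complex.'' A homotopy retract of a Poincar\'e complex is not automatically a Poincar\'e complex; one must produce a class in $H_n$ whose cap product is a (chain) equivalence on the retracted chain complex, and the informal ``$\phi$ is self-dual'' needs to be formulated and verified. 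This is exactly what Wall's explicit cell-by-cell construction is designed to make transparent, and it is what your outline leaves open. Third, the Spivak argument: saying the Spivak fibration ``is pulled back via $Y\to X'\to X$'' is not a proof (the retraction $Y\to X'$ exists only after $\phi$ splits, and the Spivak fibration is not naturally functorial in that direction); the correct move, as in the paper, is to exhibit a degree-$1$ map from a manifold into $Y$ and transport a reduction of the Spivak fibration through the induced map of Thom spaces, checking that the Thom/fundamental class is hit. In short, the proposal identifies the right shape of answer but leaves the three technical hearts of the proof — construction of the self-map, Poincar\'e duality of the telescope, and reduction of the Spivak fibration — as unproved assertions.
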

\begin{proof}
The existence of $Y$ satisfying (1) and (2) follows from  \cite[Theorem 1.5]{Wall67}. To achieve (3), one needs a slight modification of the construction there.

Let $N'$ be a smooth closed stably parallelizable $n$-manifold with $\pi_{1} (N') = G$. One way to construct such $N'$ is by surgery: build a connected sum of some copies of $S^{n-1} \times S^{1}$, and then get the desired fundamental group by framed $1$-surgeries. By \cite[Theorem 2]{Milnor61}, the resulting manifold is stably parallelizable.

Suppose $n=2k$ or $2k+1$. Let $P$ be a projective $\mathbb{Z}[G]$-module such that $[P]=(-1)^{k} \theta \in \widetilde K_{0}(\mathbb Z G)$ modulo free modules. Let $F$ be a free module such that $F = P \oplus Q$. Suppose $F$ has rank $r$. Let $N$ be the connected sum of $N'$ with $r$ copies of $S^{k} \times S^{k}$ or $S^{k} \times S^{k+1}$. By \cite{Milnor61} again, $N$ is stably parallelizable. Attaching to $N$ (usually infinitely many) cells of dimension $k$ and $k+1$, and also $k+2$ if $n=2k+1$, with appropriate attaching maps, by the argument of \cite{Wall67}, we obtain a complex $Y$ satisfying Poincar\'{e} duality and $\sigma (Y) = \theta + (-1)^{n} \theta^{*}$.

It remains to show that the Spivak normal fibration of $Y$ is trivial. Obviously, the inclusion $i: N \hookrightarrow Y$ is a degree $1$ map between Poincar\'{e} spaces. Let
\[
T(N) = N \times D^{m} / N \times S^{m-1} \qquad \text{and} \qquad T(Y) = Y \times D^{m} / Y \times S^{m-1}
\]
be the Thom spaces of the trivial spherical fibration over $N$ and $Y$ respectively, where $m>2$. The inclusion $i$ induces another inclusion $T(i): T(N) \hookrightarrow T(Y)$, which maps a fundamental class of $T(N)$ to that of $T(Y)$. Since $N$ is stably parallelizable, $N \times S^{m-1}$ is the Spivak normal fibration of $N$ for $m$ large enough. In other words, there exists a map $h: S^{n+m} \rightarrow T(N)$ which maps a fundamental class of $S^{n+m}$ to that of $T(N)$. Thus $T(i) \circ h$ maps a fundamental class of $S^{n+m}$ to that of $T(Y)$. This implies that $Y \times S^{m-1}$ is the Spivak normal fibration of $Y$, which is trivial.
\end{proof}

Let $p$ be the prime number in Theorem \ref{thm_no_finite}. By Lemma \ref{lem_k0} and Proposition \ref{prop_poincare}, we obtain the following lemma.

\begin{lemma}\label{lem_poincare}
There exists a finitely dominated Poincar\'{e} space $Y$ of formal dimension $4$ and $5$ respectively such that:
\begin{enumerate}
\item $\pi_{1} (Y) = \mathbb{Z}/p$;

\item the Wall's finiteness obstruction $\sigma (Y)$ is of odd prime order;

\item the Spivak normal fibration of $Y$ is trivial.
\end{enumerate}
\end{lemma}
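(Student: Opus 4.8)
The plan is to deduce the lemma directly from Proposition \ref{prop_poincare} by feeding it a carefully chosen class $\theta \in \widetilde{K}_{0}(\mathbb{Z}[\mathbb{Z}/p])$. Proposition \ref{prop_poincare} already produces, for any finitely presented $G$, any $\theta$, and any $n \ge 4$, a finitely dominated Poincar\'e complex $Y$ of formal dimension $n$ with $\pi_{1}(Y) = G$, trivial Spivak normal fibration, and finiteness obstruction $\sigma(Y) = \theta + (-1)^{n}\theta^{*}$. Taking $G = \mathbb{Z}/p$, conclusions (1) and (2) of the lemma are automatic; the whole content is to choose $\theta$ so that conclusion (3) holds, i.e.\ so that the symmetrized class $\theta + (-1)^{n}\theta^{*}$ has odd prime order. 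This is where Lemma \ref{lem_k0} enters: it supplies classes $y_{1}, y_{2} \in \widetilde{K}_{0}(\mathbb{Z}[\mathbb{Z}/p])$ of odd prime order with $y_{1}^{*} = y_{1}$ and $y_{2}^{*} = -y_{2}$.

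For formal dimension $4$ I would set $\theta = y_{1}$. Then $(-1)^{4} = 1$, so $\sigma(Y) = y_{1} + y_{1} = 2y_{1}$. If $y_{1}$ has odd prime order $q$, multiplication by $2$ is invertible modulo $q$, so $2y_{1}$ still has order $q$; hence $\sigma(Y)$ has odd prime order, as required. For formal dimension $5$ I would instead set $\theta = y_{2}$. Then $(-1)^{5} = -1$, so $\sigma(Y) = y_{2} - y_{2}^{*} = y_{2} - (-y_{2}) = 2y_{2}$, which again has odd prime order by the same invertibility argument. This yields the two desired Poincar\'e spaces of formal dimensions $4$ and $5$.

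The main (and essentially only) obstacle is the potential vanishing of $\theta + (-1)^{n}\theta^{*}$ for a naive choice of $\theta$: pairing a $(+1)$-eigenvector of the conjugation involution with an odd formal dimension, or a $(-1)$-eigenvector with an even formal dimension, would give $\sigma(Y) = 0$ and destroy conclusion (3). The point is therefore to match the eigenspace to the parity of the dimension so that the sign $(-1)^{n}$ in Proposition \ref{prop_poincare} reinforces rather than cancels, which is exactly what the existence of odd-order classes in \emph{both} eigenspaces of $\widetilde{K}_{0}(\mathbb{Z}[\mathbb{Z}/p])$ — guaranteed by Lemma \ref{lem_k0} under the hypothesis that $h_{p}^{-}$ and $h_{p}^{+}$ both have odd prime factors — makes possible. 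No further work is needed.
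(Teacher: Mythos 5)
Your proposal is correct and is exactly the argument the paper intends; the paper merely states ``By Lemma~\ref{lem_k0} and Proposition~\ref{prop_poincare}, we obtain the following lemma'' and leaves the parity-matching of the eigenvector ($y_1^*=y_1$ for $n=4$, $y_2^*=-y_2$ for $n=5$) and the observation that $2y$ retains odd prime order implicit. You have correctly filled in those details.
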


The space $Y$ will be the one in Lemma \ref{lem_poincare} for the rest of this section.

\begin{lemma}\label{lem_smooth_Poincare}
There exists a closed DIFF manifold $M'$ with dimension $7$ and $8$ respectively such that $M'$ is stably parallelizable and homotopy equivalent to $Y \times S^{2} \times S^{1}$.
\end{lemma}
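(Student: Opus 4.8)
The plan is to realize $Z := Y \times S^{2} \times S^{1}$ by smooth surgery, starting from an obvious degree-one normal map whose source is stably parallelizable and whose surgery obstruction is \emph{forced} to vanish by a product argument; in particular no information about the (possibly complicated) surgery groups of $\mathbb{Z}/p$ will be required. First I would record the relevant structure of $Z$. Since $Y$ is finitely dominated, $Y \times S^{1}$ is homotopy equivalent to a finite CW complex by Mather's trick, and it satisfies Poincar\'e duality of formal dimension $n+1$, where $n \in \{4,5\}$ is the formal dimension of $Y$; hence $Z = Y \times S^{2} \times S^{1}$ is a finite Poincar\'e complex of formal dimension $n+3 \in \{7,8\}$ with $\pi_{1}(Z) = \mathbb{Z}/p \times \mathbb{Z}$. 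The Spivak normal fibration is multiplicative under products, and the Spivak fibrations of $S^{1}$, of $S^{2}$, and (by Lemma~\ref{lem_poincare}) of $Y$ are all trivial; therefore the Spivak normal fibration of $Z$ is trivial, and in particular it admits the trivial vector bundle as a reduction.

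Next I would write down a degree-one normal map with stably parallelizable source. From the proof of Proposition~\ref{prop_poincare}, recall that $Y$ is obtained by attaching cells of dimension $\geq 2$ to a closed stably parallelizable $n$-manifold $N$ with $\pi_{1}(N) = \mathbb{Z}/p$, and that the resulting map $i \colon N \to Y$ is a degree-one map of Poincar\'e complexes inducing an isomorphism on $\pi_{1}$; since $N$ is stably parallelizable and the Spivak fibration of $Y$ is trivial, $i$ is covered by a bundle map to the trivial bundle, so $i$ is a degree-one normal map. Taking the product with the identity homotopy equivalences of $S^{2}$ and of $S^{1}$ yields a degree-one normal map
\[
F_{0} = i \times \mathrm{id}_{S^{2} \times S^{1}} \colon N \times S^{2} \times S^{1} \longrightarrow Z ,
\]
which is $1$-connected, whose source is stably parallelizable, and which is compatible with the trivial reduction of the Spivak fibration of $Z$.

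Finally I would kill the surgery obstruction and complete the surgery. By Ranicki's product formula for surgery obstructions, the class $\theta(F_{0}) \in L_{n+3}(\mathbb{Z}[\mathbb{Z}/p \times \mathbb{Z}])$ is (the image of) $\theta(i) \otimes \sigma^{*}(S^{2}\times S^{1})$, where $\sigma^{*}(S^{2}\times S^{1})$ is the symmetric signature of $S^{2}\times S^{1}$; since $S^{2}\times S^{1} = \partial(S^{1}\times D^{3})$ is a boundary (equivalently, since $\sigma^{*}(S^{1}) \in L^{1}(\mathbb{Z}) = 0$), this symmetric signature vanishes, and hence $\theta(F_{0}) = 0$ regardless of the value of $\theta(i)$. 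As $\dim Z = n+3 \geq 7 \geq 5$, the standard package of surgery below and at the middle dimension then converts $F_{0}$ into a homotopy equivalence $M' \to Z$ with $M'$ a closed smooth (DIFF) manifold of dimension $n+3 \in \{7,8\}$; because the entire normal cobordism is carried out compatibly with the trivial bundle reduction of the Spivak fibration of $Z$, the stable normal bundle of $M'$ is trivial, so $M'$ is stably parallelizable. This $M'$ is homotopy equivalent to $Y \times S^{2} \times S^{1}$, as asserted.

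The only genuine obstacle is the vanishing of $\theta(F_{0})$, and the product-formula argument above disposes of it cleanly precisely because the auxiliary $S^{1}$-factor contributes only a symmetric signature living in $L^{1}(\mathbb{Z}) = 0$; the remaining steps are routine smooth surgery theory for a $1$-connected degree-one normal map of dimension $\geq 5$. I note that property~(3) of Lemma~\ref{lem_poincare} --- that the Wall finiteness obstruction $\sigma(Y)$ has odd prime order --- is not used in the present lemma: it will be invoked only later, to detect that the infinite cyclic cover of the self-covering manifold eventually built from $M'$ fails to be homotopy equivalent to a finite CW complex, that cover being homotopy equivalent to $Y$ and hence carrying finiteness obstruction $\sigma(Y) \neq 0$.
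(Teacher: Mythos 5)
Your proposal is correct and follows essentially the same route as the paper: both arguments produce $M'$ by smooth surgery on a degree-one normal map to a finite Poincar\'e complex homotopy equivalent to $Y \times S^{2} \times S^{1}$, and kill the surgery obstruction via the product formula for surgery obstructions, using the trivial bundle reduction of the Spivak fibration to guarantee stable parallelizability of the result. The only difference is bookkeeping: the paper first records that $Y \times S^{1}$ is (up to homotopy) a finite Poincar\'e complex, sets up the surgery problem there, and then crosses with the simply connected factor $S^{2}$, citing Morgan's theorem; you cross with $S^{2} \times S^{1}$ in one step and observe $\sigma^{*}(S^{2}\times S^{1})=0$ because it bounds $S^{1}\times D^{3}$.

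Two small points worth flagging. First, the parenthetical $L^{1}(\mathbb{Z})=0$ is wrong --- in fact $L^{1}(\mathbb{Z})\cong\mathbb{Z}/2$, detected by the de~Rham invariant --- but your primary justification (bordism invariance of the symmetric signature together with $S^{2}\times S^{1}=\partial(S^{1}\times D^{3})$) is correct, so $\sigma^{*}(S^{1})=0$ and $\sigma^{*}(S^{2}\times S^{1})=0$ all the same. Second, since $Y$ itself is only finitely dominated, the surgery obstruction $\theta(i)$ of $i\colon N \to Y$ is not a priori defined in the standard (finite) quadratic $L$-group; writing $F_{0}=(i\times\mathrm{id}_{S^{1}})\times\mathrm{id}_{S^{2}}$ and applying the product formula at the stage where the target $Y\times S^{1}$ is already a finite Poincar\'e complex --- as the paper does --- circumvents this, and you should make that regrouping explicit (or invoke projective $L$-theory for finitely dominated targets). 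Neither point affects the validity of your argument.
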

\begin{proof}
We know the Spivak normal fibration of $Y$ is trivial, hence that of $Y \times S^{1}$ is also trivial. Choose the trivial vector bundle to represent the Spivak normal fibration of $Y \times S^{1}$. Since $Y$ is finitely dominated, $Y \times S^{1}$ is homotopy equivalent to a finite Poincar\'{e} complex. Consider the surgery problem with target $Y \times S^{1}$ endowed with the trivial vector bundle, i.e., the problem of turning a degree $1$ normal map from a smooth manifold to $Y \times S^{1}$ to a homotopy equivalence. The surgery obstruction might be non-trivial. However, by the product formula for surgery obstruction \cite[Theorem IV.1.1]{Morgan}, the surgery obstruction vanishes if one takes a product with $S^2$. Therefore there is a closed smooth  manifold $M'$ homotopy equivalent to $Y \times S^{1} \times S^{2}$. Furthermore, $M'$ is stably parallelizable, since we take the trivial vector bundle over $Y \times S^1 \times S^2$ as the reduction of the Spivak normal fibration.
\end{proof}

Clearly, $\pi_{1} (M') = \mathbb{Z}/p \times \mathbb{Z}$ and the cyclic covering $M'_{q}$ of $M'$ with $\pi_{1} (M'_{q}) = \mathbb{Z}/p \times (q \mathbb{Z})$ is homotopy equivalent to $Y \times S^{2} \times S^{1}$ for all $q \geq 1$. There is a natural homotopy equivalence $h'_{1,q} \colon M'_{1} \rightarrow M'_{q}$ which induces an isomorphism $\mathbb Z/p \times \mathbb Z \to \mathbb Z/p \times q\mathbb Z$, $(a,b) \mapsto (a,qb)$ between fundamental groups. Let $M'_{\infty}$ denote the infinite cyclic cover of $M'$ with $\pi_{1} (M'_{\infty}) = \mathbb{Z}/p$.

\begin{lemma}
The manifold $M'$ can be modified such that it satisfies an additional property: the Whitehead torsion $\tau (h'_{1,2}) \in \mathrm{Wh} (\mathbb{Z}/p) \subseteq \mathrm{Wh} (\mathbb{Z}/p \times \mathbb{Z})$.
\end{lemma}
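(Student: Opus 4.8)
The plan is to leave the homotopy type of $M'$, and all the properties already recorded in Lemma~\ref{lem_smooth_Poincare}, unchanged while altering the smooth structure by grafting on an $h$-cobordism, and then to read off the effect on $\tau(h'_{1,2})$ from the Bass--Heller--Swan decomposition~\eqref{eqn_BHS}.

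First I would analyze $\tau(h'_{1,2})$ for a fixed $M'$. Choosing a homotopy equivalence $\Psi\colon M'\to Y\times S^2\times S^1$, the natural $h'_{1,2}$ is the composite of $\Psi$ with the inverse of the lift $\Psi_2$ of $\Psi$ to the double covers, so the composition formula for Whitehead torsion yields $\tau(h'_{1,2})=c(\tau(\Psi))-\mathrm{tr}(\tau(\Psi))$, where $c\colon\mathrm{Wh}(\mathbb Z/p\times\mathbb Z)\xrightarrow{\cong}\mathrm{Wh}(\mathbb Z/p\times 2\mathbb Z)$ is the canonical isomorphism and $\mathrm{tr}$ the transfer to the index-two subgroup. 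Now decompose $\tau(\Psi)$ via~\eqref{eqn_BHS}. On the $\widetilde K_0(\mathbb Z[\mathbb Z/p])$-summand the transfer coincides with $c$ --- this is precisely Lemma~\ref{lem_transfer} --- so the $\widetilde K_0$-component of $\tau(h'_{1,2})$ vanishes no matter which $M'$ one starts with; on the two $\mathrm{Nil}(\mathbb Z[\mathbb Z/p])$-summands $\mathrm{tr}$ is a Frobenius-type operator, not $c$, so the $\mathrm{Nil}$-component of $\tau(h'_{1,2})$ is a fixed operator (of the form $1-F_2$) applied to the $\mathrm{Nil}$-component of $\tau(\Psi)$; and the $\mathrm{Wh}(\mathbb Z/p)$-component of $\tau(h'_{1,2})$ is irrelevant to the claim. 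Hence it suffices to arrange, by modifying $M'$, that the two $\mathrm{Nil}$-components of $\tau(\Psi)$ vanish.

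Next I would carry out the modification. For a class $\xi\in\mathrm{Wh}(\mathbb Z/p\times\mathbb Z)$ (to be chosen), the $s$-cobordism realization theorem --- available since $\dim M'\ge 7$ --- produces a smooth $h$-cobordism $(W;M',M'')$ with $\tau(W,M')=\xi$; replace $M'$ by $M''$ and $\Psi$ by $\Psi\circ g$, where $g\colon M''\to M'$ is the homotopy equivalence induced by $W$. Since $W$ deformation retracts onto $M'$, the new $M''$ is again a closed smooth stably parallelizable manifold with $\pi_1=\mathbb Z/p\times\mathbb Z$, and being $h$-cobordant to $M'$ it is homotopy equivalent to $Y\times S^2\times S^1$ (hence so are all its cyclic covers $M''_q$, via the covering $h$-cobordisms); thus nothing established earlier is lost. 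By Milnor's duality formula $\tau(W,M'')=(-1)^n\tau(W,M')^{*}$ (with $n=\dim M'$) one computes $\tau(\Psi\circ g)=\tau(\Psi)+(-1)^n\xi^{*}-\xi$. On the other hand $\Psi$ is a homotopy equivalence between closed $n$-manifolds with $\chi=0$, so it obeys the duality $\tau(\Psi)+(-1)^n\tau(\Psi)^{*}=0$; since the involution interchanges the two $\mathrm{Nil}$-summands (up to conjugation and a sign), this forces the $\mathrm{Nil}$-component $(\gamma_1,\gamma_2)$ of $\tau(\Psi)$ to satisfy $\gamma_2=\pm\overline{\gamma_1}$, and then the choice of $\xi$ with $\mathrm{Nil}$-component $(\gamma_1,0)$ converts $(\gamma_1,\gamma_2)$ into $(0,0)$. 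With this $\xi$ the modified $M'$ has both $\mathrm{Nil}$-components of $\tau(\Psi)$ trivial, hence $\tau(h'_{1,2})\in\mathrm{Wh}(\mathbb Z/p)$, as desired.

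\textbf{Main obstacle.} The real work is in the first step: proving the formula $\tau(h'_{1,2})=c(\tau(\Psi))-\mathrm{tr}(\tau(\Psi))$, identifying the transfer on each Bass--Heller--Swan summand (trivially on $\widetilde K_0$ by Lemma~\ref{lem_transfer}, as a Frobenius operator on the $\mathrm{Nil}$-summands), and keeping the signs straight in the duality formulas for $h$-cobordisms and for homotopy equivalences of closed manifolds. Once these identifications are in hand, the $h$-cobordism modification together with the duality $\tau(\Psi)+(-1)^n\tau(\Psi)^{*}=0$ finishes the argument.
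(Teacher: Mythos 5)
Your approach is genuinely different from the paper's, and the comparison is worth spelling out. The paper (i) passes to a high finite cyclic cover $M'_q$ to kill the two $\mathrm{Nil}$-components $x_3, x_4$ of $\tau(h'_{1,2})$, invoking the Farrell--Su result that iterated Bass--Heller--Swan transfers annihilate the $\mathrm{Nil}$-summands, and then (ii) shows the $\widetilde K_0$-component $x_2$ of $\tau(h'_{1,2})$ vanishes automatically by comparing Siebenmann's end obstruction $\sigma(M'_\infty, \varepsilon_+)$ with itself via $\overline{h'}_{1,2}$ (which preserves the positive end). You instead (i) derive the formula $\tau(h'_{1,2})=(c^{-1})_*\tau(\Psi)-\mathrm{tr}(\tau(\Psi))$ and deduce from Lemma~\ref{lem_transfer} that the $\widetilde K_0$-component vanishes identically, and (ii) try to kill the $\mathrm{Nil}$-components by grafting on a smooth $h$-cobordism, using a Milnor duality identity for $\tau(\Psi)$. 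Your step (i) is correct and is a clean alternative to Siebenmann's end obstruction; it deserves credit as a shortcut the paper does not take.

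The gap is in your step (ii). You invoke the duality $\tau(\Psi)+(-1)^n\tau(\Psi)^{*}=0$ by treating $\Psi$ as ``a homotopy equivalence between closed $n$-manifolds with $\chi=0$,'' but the target $Z=Y\times S^2\times S^1$ is \emph{not} a closed manifold; it is only a finite Poincar\'e complex. The Milnor/Wall duality formula for a homotopy equivalence of $n$-dimensional Poincar\'e complexes $f\colon X\to X'$ reads, schematically, $\tau(f)-(-1)^n\tau(f)^{*}=f_*\Delta(X)-\Delta(X')$, where $\Delta(\cdot)$ is the torsion of the chain-level Poincar\'e duality map. For a closed manifold $\Delta=0$, but for a general finite Poincar\'e complex it need not vanish, and there is no reason given here that $\Delta(Z)$, or even just its $\mathrm{Nil}$-component, is trivial. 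In fact $Y$ has nonzero Wall finiteness obstruction, which is precisely the kind of asymmetry that can make $\Delta(Y\times S^2\times S^1)$ nontrivial; your argument silently assumes it is not there. Without controlling $\Delta(Z)$, the relation $\gamma_2=\pm\overline{\gamma_1}$ does not follow, and the proposed choice of $\xi$ need not cancel $(\gamma_1,\gamma_2)$. The paper sidesteps the issue entirely: it never compares $\tau(\Psi)$ with $\tau(\Psi)^*$, but instead uses that $\tau(h'_{q,2q})=\mathrm{tr}^{(q)}(\tau(h'_{1,2}))$ and that the $\mathrm{Nil}$-transfer vanishes for $q\gg 0$. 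To repair your argument you would either have to verify that $\Delta(Z)$ has trivial $\mathrm{Nil}$-component, or simply replace your step (ii) by the paper's finite-cover argument (which combines perfectly well with your step (i)).
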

\begin{proof}
Assume $\tau (h'_{1,2}) = (x_{1}, x_{2}, x_{3}, x_{4})$ according to the decomposition (\ref{eqn_BHS}). Consider the homotopy equivalence between covering spaces $h'_{q,2q} \colon M'_{q} \rightarrow M'_{2q}$ induced by $h'_{1,2}$. Then $\tau (h'_{q,2q})$ is the transfer of $\tau (h'_{1,2})$. By \cite[Theorem 6.11, step 3]{Farrell_Su}, the transfers $\mathrm{tr} (x_{3}) = \mathrm{tr} (x_{4}) =0$ when $q$ is large enough. Clearly, $\mathrm{tr} (x_{1}) = q x_{1}$, and by Lemma \ref{lem_transfer}, $\mathrm{tr} (x_{2}) = x_{2}$. Thus replacing $M'$ with a finite cover if necessary, we may assume $x_{3} = x_{4} =0$.

Now we show that indeed $x_{2} =0$, which will complete the proof. The homotopy equivalence $h'_{1,2}$ induces a homotopy equivalence $\overline{h'}_{1,2} \colon M'_{\infty} \rightarrow M'_{\infty}$. Since $\overline{h'}_{1,2}$ induces the identity map on the fundamental group, by \cite[Corollary 4.8]{Siebenmann}, we obtain
\[
\sigma (M'_{\infty}, \varepsilon_{+}) = \sigma (M'_{\infty}, \overline{h'}_{1,2}^{-1} (\varepsilon_{+})) + x_{2} \in \widetilde{K}_{0} (\mathbb{Z}[\mathbb{Z}/p]).
\]
Here $\sigma$ is Siebenmann's finiteness obstruction of ends, $\varepsilon_{+}$ is the positive end of $M'_{\infty}$ corresponding to $(0,1) \in \mathbb{Z}/p \times \mathbb{Z} = \pi_{1} (M')$, and $\overline{h'}_{1,2}^{-1} (\varepsilon_{+})$ is the end induced from $\varepsilon_{+}$ by $\overline{h'}_{1,2}$. Since $\overline{h'}_{1,2}$ preserves the orientation of deck transformation, we infer $\overline{h'}_{1,2}^{-1} (\varepsilon_{+}) = \varepsilon_{+}$, and hence
\[
\sigma (M'_{\infty}, \varepsilon_{+}) = \sigma (M'_{\infty}, \varepsilon_{+}) + x_{2},
\]
which implies $x_{2} =0$.
\end{proof}

\begin{lemma}\label{lem_7_manifold}
There exists a closed DIFF $7$-manifold $M^{7}$ satisfying the following:
\begin{enumerate}
\item $M^{7}$ is stably parallelizable;

\item $M^{7}$ is homotopy equivalent to $Y \times S^{2} \times S^{1}$;

\item there exists a simple homotopy equivalence $h_{1,2}: M^{7} \rightarrow M^{7}_{2}$ which induces the isomorphism $\mathbb{Z}/p \times \mathbb{Z} \rightarrow \mathbb{Z}/p \times (2\mathbb{Z})$, $(a,b) \mapsto (a,2b)$ between fundamental groups.
\end{enumerate}
\end{lemma}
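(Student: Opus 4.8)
The plan is to start from the manifold $M'$ furnished by the previous lemma and to kill its residual Whitehead torsion $\tau(h'_{1,2})\in\mathrm{Wh}(\mathbb{Z}/p)\subseteq\mathrm{Wh}(\mathbb{Z}/p\times\mathbb{Z})$ in two moves: first shift it into the subgroup $2\,\mathrm{Wh}(\mathbb{Z}/p)$ by passing to a double cover, then cancel it by an $h$-cobordism. First I would replace $M'$ by its double cover $M'_{2}$ (and rename it $M'$). Properties (1) and (2) are inherited, and the new doubling equivalence $h'_{1,2}$ is the lift of the old one, so its torsion is the transfer of the old torsion under $\mathrm{Wh}(\mathbb{Z}/p\times\mathbb{Z})\to\mathrm{Wh}(\mathbb{Z}/p\times 2\mathbb{Z})$. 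On the $\mathrm{Wh}(\mathbb{Z}/p)$-summand of (\ref{eqn_BHS}) this transfer is multiplication by the index $2$ (a matrix over $\mathbb{Z}[\mathbb{Z}/p]$ becomes two diagonal copies of itself after restriction of scalars), while by Lemma \ref{lem_transfer} it fixes the $\widetilde{K}_{0}$-summand and the Nil-summands are already zero; hence after this move $\tau(h'_{1,2})=2x_{1}$ for some $x_{1}\in\mathrm{Wh}(\mathbb{Z}/p)$.

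Next, using $\dim M'=7\geq 6$, I would choose an $h$-cobordism $(\Omega;M',M^{7})$ with $\tau(\Omega,M')=-x_{1}\in\mathrm{Wh}(\mathbb{Z}/p)\subseteq\mathrm{Wh}(\pi_{1}M')$, and extend the reference map $M'\to S^{1}$ over $\Omega$ (possible since $M'\hookrightarrow\Omega$ is a homotopy equivalence, so $H^{1}(\Omega,M';\mathbb{Z})=0$), thereby equipping $M^{7}$ with a compatible map to $S^{1}$ and cyclic covers $M^{7}_{k}$. Then $M^{7}$ is a closed smooth manifold, stably parallelizable because its stable tangent bundle is the restriction of the stably trivial bundle $T\Omega$, and homotopy equivalent to $Y\times S^{2}\times S^{1}$ because it is $h$-cobordant to $M'$. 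As the candidate for (3) I would take $h_{1,2}:=\alpha_{2}\circ h'_{1,2}\circ\alpha^{-1}\colon M^{7}\to M^{7}_{2}$, where $\alpha\colon M'\to M^{7}$ and $\alpha_{2}\colon M'_{2}\to M^{7}_{2}$ are the homotopy equivalences induced by $\Omega$ and its double cover $\Omega_{2}$; by construction this induces the required doubling isomorphism on fundamental groups.

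It then remains to compute $\tau(h_{1,2})$. By the composition formula (all maps in sight induce the identity on Whitehead groups after the canonical identifications), $\tau(h_{1,2})=\tau(\alpha_{2})+\tau(h'_{1,2})-\tau(\alpha)$. The duality formula for the Whitehead torsion of an $h$-cobordism gives $\tau(\Omega,M^{7})=(-1)^{\dim\Omega-1}\,\overline{\tau(\Omega,M')}=-\overline{\tau(\Omega,M')}$, so $\tau(\alpha)=\tau(\Omega,M')+\overline{\tau(\Omega,M')}$; since the conjugation involution on the free abelian group $\mathrm{Wh}(\mathbb{Z}/p)$ is trivial (it is complex conjugation on the units of $\mathbb{Z}[\xi_{p}]$, which modulo roots of unity agree with the real units), this equals $-2x_{1}$. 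Running the same argument inside $\Omega_{2}$ and using that $\tau(\Omega_{2},M'_{2})$ is the transfer $-2x_{1}$ of $\tau(\Omega,M')$, one gets $\tau(\alpha_{2})=-4x_{1}$. Hence $\tau(h_{1,2})=-4x_{1}+2x_{1}-(-2x_{1})=0$, so $h_{1,2}$ is a simple homotopy equivalence, completing the proof.

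The step I expect to be the main obstacle is the torsion bookkeeping in the last paragraph: one must keep precise track of which group $\mathbb{Z}/p\times k\mathbb{Z}$ each Whitehead torsion lives in and how the canonical isomorphisms identify these groups and their Bass-Heller-Swan summands, check that the transfer commutes both with the conjugation involution and with the ``include-then-retract'' construction of $\alpha$, and justify that the naturality of the canonical doubling equivalence makes $\alpha_{2}\circ h'_{1,2}\circ\alpha^{-1}$ the legitimate comparison map. A minor point to verify is that the extra double cover taken at the outset does not revive the Nil-contributions eliminated in the previous lemma, which is immediate since those summands are already zero and are preserved by transfer.
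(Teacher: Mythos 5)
Your proposal is correct and follows essentially the same route as the paper: pass to a double cover so that $\tau(h'_{1,2})\in 2\,\mathrm{Wh}(\mathbb{Z}/p)$, realize $-x_{1}$ by an $h$-cobordism, then conjugate $h'_{1,2}$ by the induced comparison map and use duality together with the triviality of the involution on $\mathrm{Wh}(\mathbb{Z}/p)$ to see the torsion cancels. Your $\alpha$ is the paper's $\phi^{-1}$; with that translation the torsion bookkeeping agrees step for step.
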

\begin{proof}
Consider the $7$-manifold $M'$ constructed in Lemma \ref{lem_smooth_Poincare}. Replacing $M'$ with a double cover, we may assume $\tau (h'_{1,2}) =2y$ for some $y \in \mathrm{Wh}(\mathbb{Z}/p)$. Let $(W; M', M^{7})$ be a smooth $h$-cobordism $(W; M', M^{7})$ with $\tau (W,M') =-y$. Clearly, $M^{7}$ satisfies (1) and (2).

The group $\mathrm{Wh}(\mathbb{Z}/p)$ is free (\cite[p.~3]{Oliver}) and the involution $x \mapsto x^{*}$ on it is the identity (\cite[Corollary 7.5]{Oliver}). Hence by the Duality Theorem (\cite[p.~394]{Milnor66}),
$$\tau (W, M^{7}) = (-1)^{7} (-y)^{*} =y.$$
The inclusions $M' \hookrightarrow W$ and $M^{7} \hookrightarrow W$ result in a homotopy equivalence $\phi: M^{7} \rightarrow M'$ with $\tau (\phi) = \tau (W, M^{7}) - \tau (W,M') = 2y$.
Taking the double covering, we get $\phi_{2}: M^{7}_{2} \rightarrow M'_{2}$ and $\tau (\phi_{2}) = 2 \tau(\phi) = 4y$. Define
\[
h_{1,2}= \phi_{2}^{-1} \circ h'_{1,2} \circ \phi: \ \ M^{7} \rightarrow M^{7}_{2},
\]
where $\phi_{2}^{-1}$ is a homotopy inverse of $\phi_{2}$. Then $h_{1,2}$ is a homotopy equivalence with torsion
\[
\tau (h_{1,2}) = \tau (\phi) + \tau (h'_{1,2}) - \tau (\phi_{2}) = 0.
\]
The induced isomorphism between the fundamental groups is clearly ${h_{1,2}}_{\sharp} (a,b) = (a,2b)$. This finishes the proof.
\end{proof}

\begin{remark}
When $\dim M' =8$, the argument in Lemma \ref{lem_7_manifold} doesn't work because $\tau (\phi) =0$ in this case.
\end{remark}

\begin{lemma}\label{lem_8_mainfold}
There exist an $8$-manifold $M^8$ and a simple homotopy equivalence $h_{1,m}: M^{8} \rightarrow M^{8}_{m}$ for some $m>1$. Both $M^{8}$ and $h_{1,m}$ satisfy the corresponding properties as $M^{7}$ and $h_{1,2}$ do in Lemma \ref{lem_7_manifold}.
\end{lemma}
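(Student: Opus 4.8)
The plan is to repeat the chain of constructions of Lemma~\ref{lem_smooth_Poincare}--Lemma~\ref{lem_7_manifold}, now built on the $8$-dimensional Poincar\'e input $Y$ of Lemma~\ref{lem_poincare}; every step goes through verbatim \emph{except} the final correction of the Whitehead torsion, and that is the only thing that needs a new idea. Recall the mechanism of Lemma~\ref{lem_7_manifold}: starting from $M'$ with $\tau(h'_{1,2})=2y\in\mathrm{Wh}(\mathbb Z/p)$, one forms an $h$-cobordism $(W;M',M^7)$ with $\tau(W,M')=-y$, uses the Duality Theorem $\tau(W,M^7)=(-1)^{7}\overline{\tau(W,M')}$ together with the triviality of the conjugation on $\mathrm{Wh}(\mathbb Z/p)$ to get $\tau(W,M^7)=y$, hence a homotopy equivalence $\phi\colon M^7\to M'$ with $\tau(\phi)=2y$, which cancels $\tau(h'_{1,2})$. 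In dimension $8$ the same computation gives $\tau(W,M^8)=(-1)^{8}\overline{\tau(W,M')}=\tau(W,M')$, so $\tau(\phi)=0$: the $h$-cobordism is inert. Worse, one checks via the Rothenberg sequence --- using that $\widehat H^{\,\mathrm{odd}}(\mathbb Z/2;\mathrm{Wh}(\mathbb Z/p))=0$, since $\mathrm{Wh}(\mathbb Z/p)$ is torsion free with trivial involution --- that no normal cobordism over $Y\times S^1\times S^2$ can alter the $\mathrm{Wh}(\mathbb Z/p)$-component of the torsion of the structure either. So the torsion cannot be \emph{corrected}; it must be avoided from the start.

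Accordingly, the approach I propose is to redo the surgery step (Lemma~\ref{lem_smooth_Poincare}) in the \emph{simple} category. The target $Y\times S^1\times S^2$ is homotopy finite ($Y$ being finitely dominated, $Y\times S^1$ is finite), so it carries finite CW structures; fix one, $K$, and run simple surgery on the degree-one normal map $Z\to K$ with trivial normal data. The ordinary surgery obstruction is killed by crossing with $S^2$ exactly as in Lemma~\ref{lem_smooth_Poincare}; the \emph{simple} surgery obstruction is then killed as well, since the product formula expresses it as a product with the symmetric signature of $S^2$, which vanishes. One obtains a closed stably parallelizable $8$-manifold $M^8$ together with a \emph{simple} homotopy equivalence $M^8\to K$; in particular properties (1) and (2) hold.

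It remains to choose $K$ so that the natural map $h_{1,m}\colon M^8\to M^8_m$ comes out simple. The cyclic cover of $K$ that wraps $S^1$ once is again (canonically homeomorphic to) $K$, but the transported finite structure need not be the one we fixed; the discrepancy is exactly an element of $\widetilde K_0(\mathbb Z[\mathbb Z/p])$. The normalization achieved in the lemma preceding Lemma~\ref{lem_7_manifold} --- which forces the $\widetilde K_0$- and $\mathrm{Nil}$-components of the relevant torsion to vanish --- is what makes the transported structure agree with $K$ (equivalently, one may absorb the discrepancy by passing to a finite cover and adjusting $K$ by an $s$-cobordism). With that choice, the lift of a simple homotopy equivalence is simple (transfer of zero torsion is zero), so $h_{1,m}$, being a composite $M^8\xrightarrow{\;\simeq_s\;}K\xrightarrow{=}K\xleftarrow{\;\simeq_s\;}M^8_m$ inducing $(a,b)\mapsto(a,mb)$ on fundamental groups, is simple.

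The step I expect to be the real obstacle is the interplay of decorations in the second and third paragraphs: making precise that the simple surgery obstruction (with the $\widetilde K_0$-twisted decoration appropriate to a finitely dominated factor) vanishes after crossing with $S^2$, and that the fixed finite structure on $Y\times S^1\times S^2$ is compatible with its cyclic covers. Everything else --- stable parallelizability, the homotopy type, and the effect on $\pi_1$ --- is inherited from the constructions already in place, exactly as for $M^7$ and $h_{1,2}$ in Lemma~\ref{lem_7_manifold}.
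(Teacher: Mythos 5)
Your first paragraph is exactly right and agrees with the paper's own remark after Lemma~\ref{lem_7_manifold}: in even dimension the duality $\tau(W,M^8)=(-1)^{8}\tau(W,M')^{*}$ makes the $h$-cobordism correction inert, so the argument of Lemma~\ref{lem_7_manifold} cannot be repeated. But from there your proposal diverges from the paper and, as written, has a genuine gap.

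Doing the surgery of Lemma~\ref{lem_smooth_Poincare} ``in the simple category'' controls the torsion of $M^{8}\to K$, not the torsion of $h_{1,m}\colon M^{8}\to M^{8}_{m}$. Writing $h_{1,m}$ as the composite of the simple equivalence $M^{8}\to K$, the equivalence $\theta_{m}\colon K\to K_{m}$ between finite structures, and the (transferred, hence still simple) inverse $K_{m}\to M^{8}_{m}$, one sees $\tau(h_{1,m})=\tau(\theta_{m})$. The map you write as ``$K\xrightarrow{=}K$'' in your composite is really $\theta_{m}\colon K\to K_{m}$, and its simplicity is precisely the thing in question, not something one gets for free. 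Moreover you locate the discrepancy in the wrong summand of the Bass--Heller--Swan decomposition~(\ref{eqn_BHS}): the $\widetilde{K}_{0}(\mathbb{Z}[\mathbb{Z}/p])$-component is exactly the one that vanishes automatically by Siebenmann's end-obstruction argument (this is what the unnamed lemma before Lemma~\ref{lem_7_manifold} proves, and it is also why $h_{p}^{-},h_{p}^{+}$ enter in Theorem~\ref{thm_no_finite}); the problematic residual component is in $\mathrm{Wh}(\mathbb{Z}/p)$, which is free, so it cannot be killed by passing to finite covers (transfer multiplies by $q$) or by resetting $K$ by an $s$-cobordism (that changes the discrepancy by $(m-1)z$, which need not absorb an arbitrary element of a free group).

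What is missing is the paper's actual mechanism for choosing $m$: since the manifolds $M'_{2^{q}}$ are all stably parallelizable, the normal maps $M'_{2^{q}}\to M'$ land in the kernel of $[M',G/O]\to[M',BO]$, which is finite because $\pi_{*}(G/O)\otimes\mathbb{Q}\to\pi_{*}(BO)\otimes\mathbb{Q}$ is an isomorphism; by pigeonhole two of them are normally bordant. Crossing that normal bordism with $S^{2}$ and surgering (via Morgan's product formula) yields a genuine smooth $h$-cobordism $W^{11}$ between $M'\times S^{2}$ and $M'_{m}\times S^{2}$. Computing the torsion of the resulting equivalence $\phi$ two ways --- once via duality on $W^{11}$, where $(-1)^{10}=1$ and the trivial involution on $\mathrm{Wh}(\mathbb{Z}/p)$ forces the $\mathrm{Wh}(\mathbb{Z}/p)$-component to vanish, and once via the product formula $\tau(h'_{1,m}\times\mathrm{id}_{S^{2}})=\chi(S^{2})\,\tau(h'_{1,m})=2\tau(h'_{1,m})$ --- and using freeness of $\mathrm{Wh}(\mathbb{Z}/p)$ gives $\tau(h'_{1,m})=0$. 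This pigeonhole-plus-duality step is the new idea required in dimension~$8$, and your proposal does not contain a substitute for it.
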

\begin{proof}
Consider the $8$-manifold $M'$ constructed in Lemma \ref{lem_smooth_Poincare}. The homotopy equivalence $h'_{1,2}: M' \rightarrow M'_{2}$ induces a family of homotopy equivalences $h'_{2^{q}, 2^{q+1}}: M'_{2^{q}} \rightarrow M'_{2^{q+1}}$ for all $q \geq 0$. Composing these maps, we obtain homotopy equivalences $h'_{2^{r}, 2^{s}}: M'_{2^{r}} \rightarrow M'_{2^{s}}$ for all $s > r \geq 0$. Clearly, $\tau (h'_{2^{r}, 2^{s}}) \in \mathrm{Wh} (\mathbb{Z}/p)$. Let $h'_{2^{s}, 2^{r}}$ be a homotopy inverse of $h'_{2^{r}, 2^{s}}$.

Let $\nu_{q}$ be the normal bundle of $M'_{2^{q}}$. Consider the family of normal maps
\[
(M'_{2^{q}}, \nu_{q}) \rightarrow (M', (h'_{1,2^{q}})^{*} \nu_{q}).
\]
They represent elements in the set of normal structures $\mathscr N^{\mathrm{DIFF}}(M')$, which is the set of bordism classes of normal maps to $M'$ (\cite[Proposition 9.43]{Ranicki2}). The set $\mathscr N^{\mathrm{DIFF}}(M')$ is identified with $[M', G/O]$. The groups $\pi_{*} (G/O)$ and $\pi_{*} (BO)$ are finitely generated abelian groups, and $\pi_{*} (G/O) \otimes \mathbb{Q} \rightarrow \pi_{*} (BO) \otimes \mathbb{Q}$ are isomorphisms (\cite[\S.~9.2]{Ranicki2}). Applying the Puppe sequence to a CW structure of $M'$, we infer $[M', G/O]$ and $[M', BO]$ are finitely generated abelian groups, and
\[
[M', G/O] \otimes \mathbb{Q} \rightarrow [M', BO] \otimes \mathbb{Q}
\]
is an isomorphism. Since $M'_{2^{q}}$ are stably parallelizable for all $q$, the images of the above family of normal maps under the map $[M', G/O] \rightarrow [M', BO]$
are all trivial.
So they only represent finitely many elements in $\mathscr N^{\mathrm{DIFF}}(M')$.

Thus there exist some $s>r$ and a normal map over the bordisms
\[
\Psi^{9}: \ \ (W^{9}; M'_{2^{r}}, M'_{2^{s}}) \rightarrow (M' \times [0,1]; M' \times \{ 0 \}, M' \times \{ 1 \}),
\]
where the bundle over $M' \times [0,1]$ is trivial. Replacing $M'$ with $M'_{2^{r}}$, we obtain a normal map over the bordisms
\[
\Psi^{9}: \ \ (W^{9}; M', M'_{m}) \rightarrow (M' \times [0,1]; M' \times \{ 0 \}, M' \times \{ 1 \}),
\]
where $m= 2^{s-r}$, $\Psi^{9}|_{M'} = \mathrm{id}$ and $\Psi^{9}|_{M'_{m}} = h'_{m,1}$.

Let's apply an argument similar to the one in the proof of Lemma \ref{lem_smooth_Poincare}. By the product formula for surgery obstruction \cite[Theorem IV.1.1]{Morgan}, we obtain a homotopy equivalence of bordisms
\[
\Psi^{11}: \ \ (W^{11}; M' \times S^{2}, M'_{m} \times S^{2})  \rightarrow (M' \times [0,1]; M' \times \{ 0 \}, M'\ \times \{ 1 \}) \times S^{2}
\]
such that $\Psi^{11}|_{M' \times S^{2}} = \mathrm{id}$ and $\Psi^{11}|_{M'_{m} \times S^{2}} = h'_{m,1} \times \mathrm{id}_{S^{2}}$. The $h$-cobordism $(W^{11}; M' \times S^{2}, M'_{m} \times S^{2})$ induces a homotopy equivalence $\phi: M' \times S^{2} \rightarrow M'_{m} \times S^{2}$ such that
\begin{eqnarray*}
\tau (\phi) & = & \tau (W^{11}, M' \times S^{2}) - \tau (W^{11}, M'_{m} \times S^{2}) \\
& = & \tau (W^{11}, M' \times S^{2}) - (-1)^{10} \tau (W^{11}, M' \times S^{2})^{*}.
\end{eqnarray*}
Since the involution on $\mathrm{Wh}(\mathbb{Z}/p)$ is the identity, by (\ref{eqn_BHS}), we see
\begin{equation}\label{lem_8_mainfold_1}
\tau (\phi) = (0, x_{2}, x_{3}, x_{4}).
\end{equation}
Here we use a fact: the involution maps the first and second factors of (\ref{eqn_BHS}) into themselves, and switches the third and fourth factors. On the other hand, by the following commutative diagram of homotopy equivalences
$$
\xymatrix{
M'_{m} \times S^{2} \ar[d] \ar[r]^-{h'_{m,1} \times \mathrm{id}_{S^{2}}} & M'_{1} \times S^{2} \times \{1\} \ar[d] \\
W^{11} \ar[r] & M' \times S^{2} \times [0,1] \\
M' \times S^{2} \ar[u] \ar[r]^-{\mathrm{id}} & M' \times S^{2} \times \{0\} \ar[u]
},
$$
we see $\phi \simeq h'_{1,m} \times \mathrm{id}_{S^{2}}$. By the product formula for Whitehead torsion \cite[Corollary~1.3]{Kwun_Szczarba},
\begin{equation}\label{lem_8_mainfold_2}
\tau (\phi) = \tau (h'_{1,m}) \cdot \chi (S^{2}) = 2 \tau (h'_{1,m}) \in \mathrm{Wh}(\mathbb{Z}/p).
\end{equation}
Since $\mathrm{Wh}(\mathbb{Z}/p)$ is free, comparing (\ref{lem_8_mainfold_1}) and (\ref{lem_8_mainfold_2}), we see $\tau (h'_{1,m}) =0$. Setting $M^{8} = M'$ and $h_{1,m} = h'_{1,m}$, we finish the proof.
\end{proof}

\begin{proof}[Proof of Theorem \ref{thm_no_finite}]
Firstly, we prove the theorem in the case of $\dim M$ is $9$ or $10$.

Consider the $*$-dimensional manifold $M^{*}$ in Lemmas \ref{lem_7_manifold} and \ref{lem_8_mainfold}, where $*$ is $7$ or $8$. We know that there is a simple homotopy equivalence $h_{1,m}: M^{*} \rightarrow M^{*}_{m}$ for some $m>1$. This induces a family of simple homotopy equivalences $h_{m^{r}, m^{s}}: M^{*}_{m^{r}} \rightarrow M^{*}_{m^{s}}$ for all $s>r \geq 0$. By an argument as that in the proof of Lemma \ref{lem_8_mainfold}, replacing $M^{*}$ with a finite cover if necessary, we obtain a normal map over bordisms
\[
\Psi^{*+1}: \ \ (W^{*+1}; M^{*}, M^{*}_{k}) \rightarrow (M^{*} \times [0,1]; M^{*} \times \{0\}, M^{*} \times \{1\}),
\]
where $k>1$, $\Psi^{*+1}|_{M^{*}} = \mathrm{id}$ and $\Psi^{*+1}|_{M^{*}_{k}}$ is a simple homotopy equivalence. Now similar to the proof of Lemma \ref{lem_8_mainfold}, doing surgery for simple homotopy equivalence, we obtain a smooth $s$-cobordism $(W^{*+3}; M^{*} \times S^{2}, M^{*}_{k} \times S^{2})$. Here we apply the product formula for surgery obstruction again. Note that the formula is also valid in the case of simple homotopy equivalence (\cite[Theorem IV.1.1]{Morgan}).

Let $M^{*+2} = M^{*} \times S^{2}$. Then $M^{*+2}$ is of dimension $9$ or $10$ and is diffeomorphic to $M^{*+2}_{k} = M^{*}_{k} \times S^{2}$. Furthermore, $M^{*+2}$ is stably parallelizable because so is $M^{*}$. On the other hand,
\[
M^{*+2} \simeq Y \times S^{2} \times S^{2} \times S^{1},
\]
and hence the infinite cyclic cover
\[
M^{*+2}_{\infty} \simeq Y \times S^{2} \times S^{2}.
\]
By Gersten's product formula (\cite[Theorem 0.1]{Gersten}), the Wall's finiteness obstruction is
\[
\sigma (M^{*+2}_{\infty}) = \sigma (Y) \cdot \chi(S^{2} \times S^{2}) = 4 \sigma (Y).
\]
By Lemma \ref{lem_poincare}, the order of $\sigma (Y)$ is odd prime. Then $\sigma (M^{*+2}_{\infty})$ is nonzero, i.e. $M^{*+2}_{\infty}$ is not homotopy equivalent to any finite CW complex. In summary, $M^{*+2}$ is a desired manifold $M$ in dimensions $9$ and $10$.

By taking the product $M^{*+2} \times S^{2q}$, we obtain desired manifolds $M$ in all higher dimensions.
\end{proof}

\begin{remark}\label{rmk_class_dimension}
By Lemmas \ref{lem_class_number} and \ref{lem_k1_conjugate} and the proof of Theorem \ref{thm_no_fiber}, to guarantee the conclusion of Theorem \ref{thm_no_fiber}, it suffices to require $h_{p}^{+}$ (resp. $h_{p}^{-}$) has an odd prime factor when $n$ is odd (resp. even). In contrast, by Lemma \ref{lem_class_number} and Proposition \ref{prop_poincare}, to guarantee the conclusion of Theorem \ref{thm_no_finite}, it suffices to require $h_{p}^{-}$ (resp. $h_{p}^{+}$) has an odd prime factor when $n$ is odd (resp. even).
\end{remark}

\section{Further Questions}\label{sec_question}
We conclude this paper with some questions.

This paper only deals with the fibering problem with abelian fundamental groups. A generalization to arbitrary fundamental groups can be formulated as follows. Let $M$ be a closed manifold with fundamental groups $G \rtimes \mathbb{Z}$, where $G$ is finitely presented (and not necessarily abelian). Let $M_{k}$ be the $k$-cover of $M$ with fundamental group $G \rtimes (k\mathbb{Z})$. Suppose there is a homotopy equivalence $h: M \rightarrow M_{k}$ such that $h_{\sharp} (G) = G$ for some $k>1$, where $h_{\sharp}: \pi_{1} (M) \rightarrow \pi_{1} (M_{k})$ is induced by $h$. Does $M$ always fiber over $S^{1}$? To answer this question, we need to know if $M_{\infty}$ is homotopy equivalent to a finite CW complex, where $M_{\infty}$ is the infinite cyclic cover of $M$ with $\pi_{1} (M_{\infty}) = G$. This leads to the following question about generalizations of Theorem \ref{thm_cw_homotopy}.

\begin{question}
Suppose $X$ is a CW complex with $\pi_{1} (X) = G \rtimes \mathbb{Z}$, where $G$ is finitely presented. Let $X_{k}$ and $X_{\infty}$ be covers of $X$ with $\pi_{1} (X_{k}) = G \rtimes (k\mathbb{Z})$ and $\pi_{1} (X_{\infty}) = G$. Suppose $h: X \rightarrow X_{k}$ is a homotopy equivalence for some $k>1$ such that $h_{\sharp} (G) = G$. If $X$ is homotopy equivalent to a CW complex of finite type (resp. finitely dominated), then does this imply $X_{\infty}$ is as well?
\end{question}

Because our arguments heavily rely on the theory of commutative algebra, they do not apply if $\pi_{1} (X)$ is not abelian.
\begin{question}
Is there a proof of Theorem \ref{thm_cw_homotopy} without using extensively commutative algebra?
\end{question}

We have answered Question \ref{que_pi1_z} almost completely with one exception: $n=4$ and CAT is DIFF (=PL). As pointed out in the introduction, to answer this missing case is equivalent to answer the following question, which is open to the best of the authors' knowledge.
\begin{question}\label{que_4_diff}
Do $S^{1} \times S^{3}$ and $S^{1} \tilde{\times} S^{3}$ have a unique smooth structure?
\end{question}

Finally, we are interested in the following question.
\begin{question}
Is there an $n$-manifold $M$ satisfying the conclusion of Theorem \ref{thm_no_finite} with $n<9$, especially $n \geq 5$?
\end{question}


\end{document}